\newtheorem{thm}{\bf Theorem}[section]
\newtheorem{thmnonumber}{\bf Main Theorem}
\newtheorem{prop}[thm]{\bf Proposition}
\newtheorem{lemma}[thm]{\bf Lemma}
\newtheorem{corollary}[thm]{\bf Corollary}
\newtheorem{remark}[thm]{\bf Remark}
\def \link{\operatorname{link}}
\def \sd{\operatorname{sd}}
\def \del{\operatorname{del}}
\title{Knots in collapsible and non-collapsible balls}
\author{\Large Bruno Benedetti\footnote{Supported by the Swedish Research Council, grant ``Triangulerade M{\aa}ngfalder, 
Knutteori i diskrete Morseteori'' and the DFG Collaborative Research Center TRR 109,
``Discretization in Geometry and DynamicsÕ'.}\,\, and Frank H.~Lutz\footnote{Supported by the DFG Research Group ``Polyhedral Surfaces''.}}
\date{}
\begin{document}

\selectlanguage{english}

\maketitle

\enlargethispage{5.5mm}
\begin{abstract}  We construct the first explicit example of a simplicial $3$-ball $B_{15,66}$ that is not collapsible. It has only $15$ vertices. We exhibit  a second $3$-ball $B_{12,38}$ with $12$ vertices that is collapsible and evasive, but not shellable.  Finally, we present the first explicit triangulation of a $3$-sphere $S_{18, 125}$ (with only 18 vertices) that is not locally constructible. All these examples are based on knotted subcomplexes with only three edges; the knots are the trefoil, the double trefoil, and the triple trefoil, respectively. The more complicated the knot is, the more distant the triangulation is from being polytopal, collapsible, etc. 
Further consequences of our work~are:
\begin{compactenum}[ \rm (1) ]
\item Unshellable $3$-spheres may have vertex-decomposable barycentric subdivisions. \\(This shows the strictness of an implication proven by Billera and Provan.)
\item For $d$-balls, vertex-decomposable implies non-evasive implies collapsible, and for $d=3$ all implications are strict. (This answers a question by Barmak.)
\item Locally constructible $3$-balls may contain a double trefoil knot as a $3$-edge subcomplex. \\(This improves a result of Benedetti and Ziegler.) 
\item Rudin's ball is non-evasive.
\end{compactenum}
\end{abstract}

\vspace{5mm}

\section{Introduction}
\textsc{Collapsibility} is a combinatorial property introduced by Whitehead, and somewhat stronger than contractibility. 
In 1964, Bing proved, using knot theory,  that some triangulations of the $3$-ball are not collapsible~\cite{BING, GOO}.
Bing's  method works as follows. One starts with a finely-triangulated $3$-ball embedded in the Euclidean $3$-space. 
Then one drills a knot-shaped tubular hole inside it, stopping one step before destroying the property of being a $3$-ball. The resulting $3$-ball contains a knot that consists of a single interior edge plus many boundary edges. This interior edge is usually called \emph{knotted spanning}. If the knot is sufficiently complicated (like a \emph{double}, or a \emph{triple} trefoil), Bing's ball cannot be collapsible \cite{BING, GOO}; see also \cite{Benedetti-DMT4MWB}.
In contrast, if the knot is simple enough (like a \emph{single} trefoil), then the Bing ball may be collapsible~\cite{LICKMAR}.

  \begin{figure}
    \begin{center}
      \includegraphics[width=6cm]{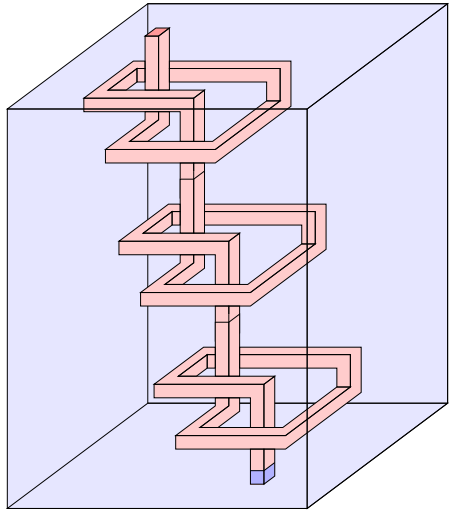}
    \end{center}
    \caption{A triple trefoil drilled inside a ball, stopping one edge before perforating it yields a non-collapsible $3$-ball.
      \label{fig:nonLC}}
  \end{figure}

Thus the existence of a short knot in the triangulation prevents a 3-ball from having a desirable combinatorial property, namely, collapsibility. This turned out to be a recurrent motive in literature. In the Eighties, several authors asked whether all $3$-spheres are shellable. This was answered in 1991 by Lickorish in the negative \cite{LICK}: The presence in a $3$-sphere of a triple trefoil on three edges prevents it from being shellable. It remained open whether all spheres are constructible (a slighly weaker property than shellability). However, in 2000 Hachimori and Ziegler showed that the presence of any non-trivial knot on three vertices in a $3$-sphere even prevents it from being constructible. Finally, in 1994 the physicists Durhuus and Jonsson asked whether all $3$-spheres are locally constructible. Once again, a negative answer, based on Lickorish's original argument, was found using knot theory; see Benedetti--Ziegler \cite{BZ}.

These examples represent spheres that are far away from being polytopal. 
Thus, they are good candidates for testing properties that are true for polytopes, but only conjectured to be true for spheres. Moreover, they represent good test instances for algorithms in computational topology, as they are complicated triangulations of relatively simple spaces. 

Unfortunately, the knotted counterexamples by Lickorish and others have a defect: They are easy to explain at the blackboard, but they yield triangulations with many vertices. 
The purpose of this paper is to come up with analogous `test examples' that are smaller in size, but still contain topological obstructions that prevent them from having nice combinatorial properties.  

A first idea to save on the number of faces is to start by realizing the respective knot in $3$-space, using (curved) arcs. Obviously, any knot can be realized with exactly three arcs in $\mathbb{R}^3$ (we just need to draw it and insert three vertices along the knot). If we thicken the arcs into three `bananas', the resulting $3$-complex~$P$ is homeomorphic to a solid torus pinched three times. By inserting $2$-dimensional membranes, $P$ can be made contractible, and then it can be thickened to a $3$-ball (or a $3$-sphere) simply by adding cones. This approach costs a lot of manual effort, but a posteriori, it allows us to obtain new insight. In fact, here comes the second idea: We can ask a computer to perform random bistellar flips to the triangulation of the ball, \emph{without modifying the subcomplex $P$}. Performing the flips according to a simulated annealing strategy \cite{BjoernerLutz2000} we were able to decrease the size of the triangulation, 
but for sure the flips will preserve the knotted substructure and its number of arcs. 

This construction was introduced by the second author in \cite{LUTZ1}, who applied it to the single trefoil, thereby obtaining a knotted $3$-ball $B_{12, 38}$ with $12$ vertices and $38$ tetrahedra. Here we apply the method to the double trefoil and the triple trefoil. The resulting spheres turn out to be interesting in connection with some properties which we will now describe.

The notion of \textsc{evasiveness} has appeared first in theoretical computer science, in Karp's conjecture on monotone graph properties. Kahn, Saks and Sturtevant~\cite{KahnSaksSturtevant} extended the evasiveness property to simplicial complexes, showing that \emph{non-evasiveness} strictly implies collapsibility. 
One can easily construct explicit examples of collapsible evasive $2$-complexes in which none of the vertex-links is contractible~\cite{BarmakMinian}; see also \cite{BenedettiLutz}. 
Basically there are three known ways to prove that a certain complex $E$ is evasive:
\begin{compactenum}[ $\qquad$ (A) ]
\item One shows that none of its vertex-links is contractible, cf.~\cite{BarmakMinian};
\item one proves that the Alexander dual of $E$ is evasive, cf.~\cite{KahnSaksSturtevant};
\item one shows (for example, via knot-theoretic arguments \cite{BING}) that $E$ is not even collapsible.
\end{compactenum}
\noindent But are there collapsible evasive {\em balls}? And if so, how do we prove that they are evasive? Clearly, none of the approaches above would work. This was asked to us by Barmak (private communication). Once again, we found a counterexample in the realm of knotted triangulations: specifically, Lutz's triangulation  $B_{12, 38}$, which contains a single-trefoil knotted spanning edge.

\begin{thmnonumber} \label{mainthm:1}
The $3$-ball $B_{12, 38}$ is collapsible and evasive. However, it is not shellable and not locally constructible.
\end{thmnonumber}

To prove collapsibility, we tried, using the computer, several collapsing sequences, until we found a lucky one. To show evasiveness, we used some sort of `trick': We computed the homology of what would be left from $B_{12, 38}$ after deleting roughly half of its vertices. It turns out that deleting five vertices from $B_{12, 38}$  (no matter which ones) yields almost always some complex with non-trivial homology. From that we were able to exclude non-evasiveness.

En passant, we also prove the non-evasiveness of other existing triangulations that were known to be collapsible, like Rudin's ball (Theorem \ref{thm:Rudin}) or Lutz's triangulations $B_{7,10}$ \cite{LutzEGnonVD} 
and $B_{9,18}$ \cite{LutzEGnonSH}.

Main Theorem \ref{mainthm:1} can be viewed as an improvement on the  result from 1972 by Lickorish--Martin~\cite{LICKMAR} and Hamstrom--Jerrard \cite{HAM} that a ball with a knotted spanning edge can be collapsible. Recently Benedetti--Ziegler~\cite{BZ} constructed a similar example with all vertices on the boundary. In contrast, our $B_{12, 38}$ has exactly one interior vertex. 
We also mention that $B_{12,38}$ is the first example of  a manifold that admits a perfect discrete Morse function, but cannot admit a perfect Fourier--Morse function in the sense of Engstr\"{o}m \cite{Engstroem}. In fact, a complex is non-evasive if and only if it admits a Fourier--Morse function with only one critical cell. 

\medskip
\textsc{Vertex-decomposability} is a strengthening of shellability, much like non-evasiveness is a strengthening of collapsibility. 
It was introduced by Billera and Provan in 1980, in connection with the Hirsch conjecture \cite{PB}. For $3$-balls, we have the following diagram of implications: 
\[
\begin{array}{ccc}
\hbox{vertex-decomposable} & \Rightarrow & \hbox{shellable} \\

\Downarrow & \qquad & \Downarrow \\

\hbox{non-evasive} & \Rightarrow & \hbox{collapsible} \\

\end{array}
\]

In addition, the barycentric subdivision of any shellable complex is vertex-decomposable~\cite{PB} --- and the barycentric subdivision of any collapsible complex is non-evasive~\cite{Welker}. What about the converse? Can an unshellable ball  or sphere become 
vertex-decomposable after a single barycentric subdivision? The answer is positive. The barycentric subdivision of $B_{12, 38}$ is, in fact, vertex-decomposable. The same holds for $S_{13, 56}$, the unshellable $3$-sphere obtained coning off the boundary of $B_{12, 38}$;
see Proposition~\ref{prop:subdivisions}.

Next, we turn to a concrete question from \textsc{discrete quantum gravity}. Suppose that we wish to take a walk on the various triangulations of $S^3$, by starting with the boundary of the $4$-simplex and performing a random sequence of bistellar flips (also known as `Pachner moves'). All triangulated $3$-spheres can be obtained this way \cite{Pachner}, but some may be less likely to appear than others, like the $16$-vertex triangulation $S_{16,104}$ by Dougherty, Faber and Murphy \cite{BagchiDatta11, DoughertyFaberMurphy}. 
(In fact, any `Pachner walk' from the boundary of the $4$-simplex to $S_{16,104}$ must pass through spheres with more than $16$ vertices.) This `random Pachner walk' model is used in discrete quantum gravity, by Ambj\o rn, Durhuus, Jonsson and others, to estimate the total number of triangulations of $S^3$ \cite{ADJ, AmbVar}. Durhuus and Jonsson have also developed the property of local constructibility, conjecturing it would hold for all $3$-spheres \cite{DJ}. As we said, the conjecture was negatively answered in \cite{BZ}, but it remained unclear how difficult it is to reach counterexamples, using a random Pachner walk. In other words: 
How outspread should the simulation be, before we have the chance to meet a non-locally constructible sphere? 

Here we answer this question by presenting the first explicit triangulation of a non-locally constructible $3$-sphere. For that, we  have to adapt the construction of $B_{12,38}$ from the single trefoil to the triple trefoil. In the end, we manage to use only $18$ vertices. The surprise is that via Pachner moves, the final triangulation is reachable rather straightforwardly.

\begin{thmnonumber}
Some $17$-vertex triangulation $B_{17,95}$ of the $3$-ball contains a triple trefoil knotted spanning edge. This $B_{17,95}$ is not collapsible.
Coning off the boundary of $B_{17,95}$ one obtains a knotted \-{$3$-sphere} $S_{18,125}$ that is \emph{not} locally constructible. Removing \emph{any} tetrahedron from $S_{18,125}$ one obtains a knotted {$3$-ball} that is neither locally constructible nor collapsible. This $S_{18, 125}$ is `$3$-stellated', in the notation of Bagchi--Datta \cite{BagchiDatta11}: it can be reduced to the boundary of a $4$-simplex by using $94$ Pachner moves that do not add further vertices.
\end{thmnonumber}

\medskip

After dealing with the single trefoil and the triple trefoil, let us turn to the intermediate case of the double trefoil. 
By the work of Benedetti--Ziegler, any $3$-ball containing a $3$-edge knot in its $1$-skeleton cannot be locally constructible if the knot is the sum of three or more trefoils~\cite{BZ}. But is this bound best possible? In \cite{BZ} it is shown with topological arguments that a \emph{collapsible} $3$-ball may contain a double trefoil knot on $3$ edges. Recall that locally constructible $3$-balls are characterized by the property of collapsing onto their boundary minus a triangle \cite{BZ}. This is stronger than just being collapsible.
It remained unclear whether a \emph{locally constructible}  $3$-ball may indeed contain a double trefoil on three edges.

\enlargethispage*{2mm}

We answer this question affirmatively in Section~\ref{sec:2}. As before, the key consists in triangulating cleverly, so that computational approaches may succeed. On the way to this result, we produce a smaller example of a non-collapsible ball, using only $15$ vertices and $66$ tetrahedra.

\begin{thmnonumber}
Some $15$-vertex triangulation $B_{15,66}$ of the $3$-ball 
contains a double trefoil knotted spanning edge. This $B_{15,66}$ 
is not collapsible. Coning off the boundary of $B_{15,66}$ 
one obtains a knotted \mbox{$3$-sphere} $S_{16,92}$ 
that \emph{is} locally constructible. Removing the tetrahedron 
$1\,9\,14\,15$ from $S_{16,92}$ one obtains a knotted {$3$-ball} 
that is collapsible and locally constructible.
\end{thmnonumber}

Now, for each $d \ge 3$ one has the following hierarchy 
of combinatorial properties of triangulated $d$-spheres~\cite{BZ}: 
\[ 
 \{ \textrm{vertex-decomposable} \} \subsetneq 
 \{ \textrm{shellable} \}  \subseteq
 \{ \textrm{constructible}\} \subsetneq 
 \{ \textrm{LC}\} \subsetneq
 \{ \textrm{all $d$-spheres}\}.
\] 
\noindent An analogous hierarchy holds for $d$-balls ($d\ge 3$)~\cite{BZ}: 
\[ 
 \{ \textrm{vertex-decomp.} \} \subsetneq 
 \{ \textrm{shellable} \}  \subsetneq
 \{ \textrm{constructible}\} \subsetneq 
 \{ \textrm{LC}\} \subsetneq
\left\{ \!\!\!
\begin{array}{c}
\textrm{collapsible onto} \\
\textrm{$(d-2)$-complex}
\end{array}
\!\!\!\right\} 
\subsetneq 
\{ \textrm{all $d$-balls}\}. 
\] 
\noindent (When $d=3$, ``collapsible onto a $1$-complex'' is equivalent 
to ``collapsible''.)

\pagebreak

Here is another interesting hierarchy for balls, which can 
be merged with the previous one.

\begin{thmnonumber}
 There are the following inclusion relations between families of 
 simplicial $d$-balls: 
\[ 
 \{ \mbox{vertex-decomposable} \} \subseteq
 \{ \mbox{non-evasive} \}  \subseteq
 \{ \mbox{collapsible}\} \subseteq 
 \{ \mbox{all $d$-balls}\}. 
\] 
For $2$-balls all inclusions above are equalities, whereas for $3$-balls 
all inclusions above are strict. More precisely, we have the following 
`mixed' hierarchy:
\[ 
 \{ \textrm{vertex-decomposable} \} \subsetneq 
\left\{ \!\!\!
\begin{array}{c}
\mbox{shellable AND} \\
\mbox{non-evasive}
\end{array}
\!\!\!\right\} 
\subsetneq
\left\{ \!\!\!
\begin{array}{c}
\mbox{shellable OR} \\
\mbox{non-evasive}
\end{array}
\!\!\!\right\} 
\subsetneq 
\{ \mbox{collapsible}\} \subsetneq 
 \{ \mbox{all $3$-balls}\}. 
\] 
\end{thmnonumber}

\begin{table}[t]
\caption{List of $3$-balls and $3$-spheres discussed here} 
\vskip3.5mm
\centering 
\begin{tabular}{c l l l l l l}
\hline
Trefoils \vline &  \multicolumn{2}{c}{$3$-ball $B$} \vline & \multicolumn{2}{c}{$3$-Sphere $\partial (v \ast B)$} \vline & 
\multicolumn{2}{c}{$3$-ball $\partial (v \ast B) - \Sigma$} \\
\hline
\\[-3.5mm]
0 & $B_{7,10}$ & \footnotesize{sh., NE, non-VD}& $S_{8,20}$ & \footnotesize{VD}& $B_{8,19}$ & \footnotesize{VD}\\ 
0 & $B_{8,13}$ & \footnotesize{sh., non-VD} & $S_{9,25}$ & \footnotesize{sh., non-VD} & $B_{9,24}$ &\footnotesize{sh.}\\  
0 & $B_{9,18}$ & \footnotesize{constr., NE, non-sh.} & $S_{10,32}$ & \footnotesize{sh.} & $B_{10,31}$ &\footnotesize{sh.}\\ 
1 & $B_{12,38}$ & \footnotesize{coll., evasive, non-LC} & $S_{13,56}$ & \footnotesize{LC, non-constr.} & $B_{13,55}$ &\footnotesize{LC, non-constr.}\\ 
2 & $B_{15,66}$ 
& \footnotesize{non-coll.} & $S_{16,92}$ & \footnotesize{LC, non-constr.} & $B_{16,91}$ &\footnotesize{LC, non-constr.}\\ 
3 & $B_{17,95}$ 
& \footnotesize{non-coll.} & $S_{18,125}$ & \footnotesize{non-LC} & $B_{18,124}$ &\footnotesize{non-coll.}\\ [1ex]
\hline 
\end{tabular}
\vskip1mm
{\footnotesize
 Note: VD = vertex-decomposable, sh. = shellable, constr. = constructible, LC = locally constructible, coll. = collapsible, NE~=~non-evasive. ``\textsc{Trefoils:} $t$'' means ``containing a $t$-fold trefoil on $3$ edges''.}
\label{table:nonlin} 
\end{table}

\section{Background}

\subsection{Combinatorial properties of triangulated spheres and balls}

A $d$-complex is \emph{pure} if all of its top-dimensional faces (called \emph{facets}) have the same dimension.

A pure $d$-complex $C$ is \emph{constructible} if either $C$ is a simplex, or $C$ is a disjoint union of points, or 
$d \ge 1$ and $C$ can be written as $C= C_1 \cup C_2$, where $C_1$ and $C_2$ are constructible $d$-complexes and $C_1 \cap C_2$ is a constructible $(d-1)$-complex.

A pure $d$-complex $C$ is \emph{shellable} if either (1) $C$ is a simplex, or (2) $C$ is a disjoint union of points, or (3) $d \ge 1$ and $C$ can be written as $C= C_1 \cup C_2$, where $C_1$ is a shellable  $d$-complex, $C_2$ is a $d$-simplex, and $C_1 \cap C_2$ is a shellable $(d-1)$-complex.

A pure $d$-complex $C$ is \emph{vertex-decomposable} if either (1) $C$ is a simplex, or (2) $C$ is a disjoint union of points, or (3) $d \ge 1$ and there is a vertex $v$ in $C$ (called \emph{shedding vertex}) such that $\del (v, C)$ and $\link (v, C)$ are both vertex-decomposable
(and $\del (v, C)$ is pure $d$-dimensional).

A (not necessarily pure!) $d$-complex $C$  is \emph{non-evasive} if either (1) $C$ is a simplex, or (2) $C$ is a single point, or (3) $d \ge 1$ and there is a vertex $v$ in $C$ such that $\del (v, C)$ and $\link (v, C)$ are both non-evasive.

An \emph{elementary collapse} is the simultaneous removal from a $d$-complex $C$ of a pair of faces $(\sigma, \Sigma)$ with the prerogative that $\Sigma$ is the only face properly containing $\sigma$.
(This condition is usually abbreviated in the expression 
`$\sigma$ is a free face of $\Sigma$'; some complexes have no free face).
If $C':= C - \Sigma - \sigma$, we say that the complex $C$
\emph{collapses onto} the complex $C'$. Even if $C$ is pure, this $C'$ need not be pure. We say that the complex $C$ \emph{collapses onto} $D$ if $C$ can be reduced to $D$ by some finite sequence of elementary collapses. A (not necessarily pure) $d$-complex $C$  is \emph{collapsible} if it collapses onto a single vertex.

A simplicial $3$-ball is \emph{locally constructible} (or shortly \emph{LC}) if it can be collapsed onto its boundary minus a triangle. A simplicial $3$-sphere is \emph{locally constructible} (or shortly \emph{LC}) if the removal of some tetrahedron makes it collapsible onto one of its vertices.

\subsection{Perfect discrete Morse functions}
A map $f: C \longrightarrow \mathbb{R}$ on a simplicial complex $C$ is a \emph{discrete Morse function on $C$} if for each face $\sigma$
\begin{compactenum}[(i)]
\item there is at most one boundary facet $\rho$ of $\sigma$ such that $f(\rho) \ge f(\sigma)$ and
\item there is at most one face $\tau$ having $\sigma$ as boundary facet such that $f(\tau) \le f(\sigma)$.
\end{compactenum}

\noindent A \emph{critical face} of $f$ is a face of $C$ for which
\begin{compactenum}[(i)]
\item there is no boundary facet $\rho$ of $\sigma$ such that $f(\rho) \ge f(\sigma)$ and
\item there is no face $\tau$ having $\sigma$ as boundary facet such that $f(\tau) \le f(\sigma)$.
\end{compactenum}

\noindent A \emph{collapse-pair} of $f$ is a pair of faces $(\sigma, \tau)$ such that
\begin{compactenum}[(i)]
\item $\sigma$ is a boundary facet of $\tau$ and
\item $f(\sigma) \ge f(\tau)$.  
\end{compactenum} 

Forman \cite[Section~2]{FormanADV} showed that for each discrete Morse function $f$ the collapse pairs of $f$ form a partial matching of the face poset of $C$. The unmatched faces are precisely the critical faces of~$f$. Each complex $K$ endowed with a discrete Morse function is homotopy equivalent to a cell complex with exactly one cell of dimension $i$ for each critical $i$-face \cite{FormanADV}. 
In particular, if we denote by $c_i (f)$ the number of critical $i$-faces of $f$, and by $\beta_i (C)$ the $i$-th Betti number of $C$, one has
\[ 
c_i (f) \ge \beta_i (C)
\]
for all discrete Morse functions $f$ on $C$. These inequalities need not be sharp.
If they are sharp for all $i$, the discrete Morse function is called \emph{perfect}.
However, for each $k$ and for each $d \ge 3$ there is a $d$-sphere $S$ \cite{Benedetti-DMT4MWB} such that for any discrete Morse function $f$ on $S$, one has
\[
c_{d-1} (f) \ge k + \beta_{d-1} (S) = k. 
\]

\subsection{Knots and knot-theoretic obstructions}

A \emph{knot} is a simple closed curve in a $3$-sphere. All the knots we consider are \emph{tame}, that is, realizable as $1$-dimensional subcomplexes of some triangulated $3$-sphere. A knot is \emph{trivial} if it bounds a disc; all the knots we consider here are non-trivial. The \emph{knot group} is the fundamental group of the knot complement inside the ambient sphere. For example, the knot group of the trefoil knot (and of its mirror image) is $\langle\, x,y \:\: | \:\: x^2 = y^3\,\rangle$. Ambient isotopic knots have isomorphic knot groups. A \emph{connected sum} of two knots is a knot obtained by cutting out a tiny arc from each and then sewing the resulting curves together along the boundary of the cutouts. For example, summing two trefoils one obtains the ``granny knot''; summing a trefoil and its mirror image one obtains the so-called ``square knot''. When we say ``double trefoil'', we mean any of these (granny knot or square knot): From the point of view of the knot group, it does not matter. A knot is 
\emph{$m$-complicated} if the knot group has a presentation with $m + 1$ generators, but no presentation with $m$ generators. By ``at least $m$-complicated'' we mean ``$k$-complicated for some $k \geq m$''. There exist arbitrarily complicated knots: Goodrick \cite{GOO} showed that the connected sum of $m$ trefoil knots is at least $m$-complicated.

A \emph{spanning edge} of a $3$-ball $B$ is an interior edge
that has both endpoints on the boundary $\partial B$. An \emph{$\mathfrak{L}$-knotted
spanning edge} of a $3$-ball $B$ is a spanning edge $xy$ such that some
simple path on $\partial B$ between $x$ and $y$ completes the edge to a
(non-trivial) knot $\mathfrak{L}$. From the simply-connectedness of $2$-spheres it follows that the knot type does not depend on the boundary path chosen; in other
words, the knot is determined by the edge. More generally, a \emph{spanning arc} is a path of interior edges in a $3$-ball $B$, such that both extremes of the path lie on the
boundary $\partial B$. If every path on $\partial B$ between the two endpoints of a spanning arc completes the latter to a knot $\mathfrak{L}$, the arc is called
\emph{$\mathfrak{L}$-knotted}. Note that the relative interior of the arc is allowed to intersect the boundary of the $3$-ball; compare Ehrenborg--Hachimori {\cite{EH}}.

Below is a list of known results on knotted spheres and balls. As for the notation, if $B$ is a $3$-ball with a knotted spanning edge, by $S_B$ we will mean the $3$-sphere $\partial (v \ast B)$, where $v$ is a new vertex. By $\mathfrak{L}_t$ we denote a connected sum of $t$ trefoil knots.

\begin{thm}[Benedetti/Ehrenborg/Hachimori/Ziegler] \label{thm:A}
Any $3$-ball with an \emph{$\mathfrak{L}_t$-knotted} spanning arc of $t$ edges cannot be LC \cite{Benedetti-DMT4MWB}, but it can be collapsible \cite{BZ, LICKMAR}. An arbitrary $3$-ball with an \emph{$\mathfrak{L}_1$-knotted} spanning arc of less than $3$ edges cannot be shellable nor constructible~\cite{HZ}. In contrast, some shellable $3$-balls have a \emph{$\mathfrak{L}_1$-knotted} spanning arc of $3$ edges \cite{HZ}.
\end{thm}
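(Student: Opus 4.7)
The plan is to address the four assertions separately, since each is attributed to a different source and the statement is essentially a compilation.

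\textbf{Part 1 (non-LC).} I would proceed by contradiction. Suppose $B$ is LC and contains an $\mathfrak{L}_t$-knotted spanning arc $\alpha$ with exactly $t$ interior edges. By definition, $B$ collapses onto $\partial B$ minus an open triangle $\Delta$. Coning $B$ with a new vertex $v$ yields a $3$-sphere $S = \partial(v \ast B)$ in which $\alpha$, together with a suitable path through $v$, realizes the knot $\mathfrak{L}_t$. The key step is to convert the collapse sequence into Tietze transformations on a Wirtinger-type presentation of $\pi_1(S \setminus \mathfrak{L}_t)$: each elementary collapse outside $\alpha$ is either neutral or adds a relator, and each of the $t$ interior edges of $\alpha$ contributes at most one generator. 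This produces a presentation of the knot group on at most $t$ generators, contradicting Goodrick's theorem that $\mathfrak{L}_t$ is at least $t$-complicated (hence admits no presentation on $t$ generators). The delicate part, and the main obstacle, is the bookkeeping that transcribes the collapses into Tietze moves in a way that exploits the stronger LC hypothesis rather than mere collapsibility; this is carried out in \cite{Benedetti-DMT4MWB}.

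\textbf{Part 3 (no $\mathfrak{L}_1$-knotted arc of fewer than $3$ edges in a constructible ball).} Following Hachimori--Ziegler, I would induct on the recursive constructible decomposition $C = C_1 \cup C_2$, where $C_1 \cap C_2$ is a constructible $2$-ball. Either the knotted arc $\alpha$ lies entirely in one $C_i$, in which case induction applies to a smaller constructible ball still containing a short knotted arc, or $\alpha$ crosses the separating $2$-ball $D = C_1 \cap C_2$. In the latter case, the simple connectedness of $D$ lets one replace the part of $\alpha$ in $C_1$ or $C_2$ by a boundary path through $D$, producing a closed curve that bounds a disc; combined with the fact that $\alpha$ has fewer than three interior edges, the original knot must have been trivial, a contradiction. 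The main obstacle is a careful case analysis for how $\alpha$ meets $D$ and how boundary paths near the endpoints of $\alpha$ behave under the constructible split.

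\textbf{Parts 2 and 4 (existence of collapsible and of shellable examples).} These are purely constructive and need nothing beyond pointing at the published examples. For part 2, Lickorish--Martin \cite{LICKMAR} exhibit a collapsible $3$-ball containing a trefoil-knotted spanning edge, and Benedetti--Ziegler \cite{BZ} extend this to arbitrary $\mathfrak{L}_t$ on a $t$-edge spanning arc. For part 4, Hachimori--Ziegler \cite{HZ} display an explicit shellable $3$-ball containing an $\mathfrak{L}_1$-knotted spanning arc of exactly three edges. In both cases, collapsibility (resp.\ shellability) of the explicit triangulation is verified directly and no further argument is required.
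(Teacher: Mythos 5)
This theorem is stated in the paper as background, with no proof given beyond the citations to \cite{Benedetti-DMT4MWB}, \cite{BZ}, \cite{LICKMAR}, and \cite{HZ}; your treatment takes the same route, deferring the substance to those references while adding accurate sketches of their arguments (the collapse-to-Tietze-transformation count against Goodrick's bound for the LC obstruction, the induction over the constructible decomposition for the Hachimori--Ziegler part, and explicit published examples for the two existence claims). Nothing in your outline conflicts with the cited proofs, so this is consistent with the paper's (proof-free) presentation.
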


\begin{thm}[Adams et al. {\cite[Theorem~7.1]{AdamsEtAl}}] \label{thm:embed}
Any knotted $3$-ball in which the knot $\mathfrak{L}_t$ is realized with $e$ edges cannot be rectilinearly embeddable in $\mathbb{R}^3$ if\, $e \le 2t + 3$.
\end{thm}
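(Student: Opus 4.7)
The plan is to argue by contrapositive, converting the statement into a lower bound on the stick number of $\mathfrak{L}_t$. If $B$ is rectilinearly embedded in $\mathbb{R}^3$, every edge of $B$ becomes a straight segment in space, so the $e$ edges realizing $\mathfrak{L}_t$ inside $B$ assemble into a closed polygonal curve with $e$ sticks, ambient isotopic to $\mathfrak{L}_t$. Hence the stick number $s(\mathfrak{L}_t)$ is at most $e$, and the theorem reduces to proving
\[
s(\mathfrak{L}_t) \;\geq\; 2t+4.
\]

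I would establish this by induction on $t$, with base case $t=1$ given by the classical fact that the trefoil has stick number $6$ (any $5$-stick polygon bounds a piecewise linear disc obtained by fanning from a vertex, hence is unknotted). The standard knot-theoretic lower bounds fall short: Schubert's additivity $b(K_1 \# K_2) = b(K_1) + b(K_2) - 1$ for bridge number gives $b(\mathfrak{L}_t) = t+1$, and the inequality $s(K) \geq 2\,b(K)$ (proved by taking a generic height function on a minimum-stick representative and observing each local extremum occurs at a polygon vertex) yields only $s(\mathfrak{L}_t) \geq 2t+2$, two sticks short. The missing $+2$ must be extracted from the connected-sum structure itself. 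I would pick a minimum polygonal representative of $\mathfrak{L}_t$ together with a piecewise linear essential $2$-sphere $\Sigma \subset \mathbb{R}^3$ realizing one of the $t-1$ decomposing splittings, perturbed into general position with the polygon. Since $\Sigma$ meets the knot in exactly two points, each lying in the interior of a stick, a careful count should show that two sticks are ``spent'' on crossing $\Sigma$ beyond the $2t+2$ extremal ones, yielding a ``polygonal Schubert inequality'' $s(K_1 \# K_2) \geq s(K_1) + s(K_2) - 4$ which closes the induction upon application to $K_1 = $ trefoil and $K_2 = \mathfrak{L}_{t-1}$.

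The hard part is making this last count rigorous: a minimum-stick representative does not come equipped with a canonical decomposing sphere, and one must argue that $\Sigma$ can always be chosen so that its two intersection sticks are distinct from those realizing the height extrema, or that forced coincidences nevertheless create an extra pair of sticks elsewhere. An orthogonal strategy based on crossing-number additivity $c(\mathfrak{L}_t) = 3t$ combined with the naive bound $\binom{e}{2}-e$ on the number of crossings in a generic projection of an $e$-stick polygon gives only $e = \Omega(\sqrt{t})$, so the linear bound $2t+4$ genuinely requires exploiting the connected-sum decomposition rather than any single classical invariant; this is the essential obstacle of the plan.
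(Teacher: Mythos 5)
First, note that the paper does not prove this statement at all: it is quoted verbatim from Adams--Brennan--Greilsheimer--Woo \cite[Theorem~7.1]{AdamsEtAl}, so the only question is whether your argument actually establishes it. Your opening reduction is correct and is exactly the right way to read the theorem: a rectilinear embedding turns the $e$ edges of the knotted subcomplex into a polygon with at most $e$ sticks, so everything reduces to the stick-number bound $s(\mathfrak{L}_t)\ge 2t+4$. The base case $t=1$ is also fine in substance (though ``fanning from a vertex'' does not literally produce an embedded disc for an arbitrary pentagon; the triviality of $\le 5$-stick polygons needs a general-position or case argument, or a citation).

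The genuine gap is the inductive step. Your ``polygonal Schubert inequality'' $s(K_1\# K_2)\ge s(K_1)+s(K_2)-4$ is not a known fact you can lean on --- lower bounds for stick numbers of composite knots in terms of the factors are a notoriously open problem (it is not even known that $s(K_1\# K_2)\ge s(K_i)$ in general) --- and your sketch does not prove it. Cutting a minimal polygon along a decomposing sphere $\Sigma$ gives two polygonal arcs with a controlled total number of segments, but to convert each arc into a closed polygon representing its factor you must join its endpoints by an arc that is isotopic, rel endpoints and in the complement of the other strand, to an arc \emph{on} $\Sigma$; the straight chord between the two intersection points need not have this property, and there is no a priori bound on how many sticks a correct closure requires. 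So the count ``two sticks are spent'' cannot be made rigorous along these lines. The actual argument of Adams et al.\ sidesteps connected-sum surgery on polygons entirely: the height-function count you already use shows that for \emph{every} direction (not just an optimal one) an $e$-stick polygon has at most $\lfloor e/2\rfloor$ local maxima, i.e.\ $s(K)\ge 2\,sb(K)$ where $sb$ is Kuiper's \emph{superbridge} number; Kuiper proved $sb(K)\ge b(K)+1$ for every nontrivial knot, and Schubert's additivity gives $b(\mathfrak{L}_t)=t+1$, whence $s(\mathfrak{L}_t)\ge 2(t+2)=2t+4$. The missing ``$+2$'' comes from upgrading the bridge number to the superbridge number, not from the connected-sum decomposition of a minimal polygon.
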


\begin{thm}[Benedetti/Ehrenborg/Hachimori/Shimokawa/Ziegler] \label{thm:C}
A $3$-sphere or a $3$-ball, with a subcomplex of $m$ edges, isotopic to the sum of $t$ trefoil knots,
\begin{compactitem}[ --- ]
\item cannot be vertex-decomposable if\, $t \ge \lfloor \frac{m}{3} \rfloor$  \cite{HZ},
\item cannot be constructible/shellable if\, $t \ge \lfloor \frac{m}{2} \rfloor$ \cite{EH,HS}, and
\item cannot be LC if\, $t \ge m$ \cite{BZ}. 
\end{compactitem}
The first two bounds are known to be sharp for $t=1$ \cite{HZ}; the latter bound is sharp for all $t$, as far as spheres are concerned~\cite{Benedetti-diss,BZ}.
\end{thm}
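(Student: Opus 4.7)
The common strategy is to combine a knot-theoretic lower bound with combinatorial upper bounds on the number of generators needed to present the knot group. By Goodrick's theorem (cited just above), $\mathfrak{L}_t$ requires at least $t+1$ generators. Complementing this, the plan is to show that a $3$-ball or $3$-sphere of the given combinatorial type, containing $\mathfrak{L}_t$ on $m$ edges, admits a presentation of the knot group with at most $m$, $\lfloor m/2 \rfloor + 1$, or $\lfloor m/3 \rfloor + 1$ generators in the LC, shellable/constructible, and vertex-decomposable cases respectively. The three obstructions then follow by a pigeonhole. Throughout, the sphere case reduces to the ball case by removing a tetrahedron disjoint from the knotted subcomplex (possible since the knot uses only $m \leq$ few edges), which is why the bounds hold uniformly for balls and spheres.

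For the LC bound (Benedetti--Ziegler), I would use the characterization that an LC $3$-ball $B$ collapses onto $\partial B \setminus \Delta$ for some boundary triangle $\Delta$. Passing to the regular neighborhood complement $B \setminus \nu(K)$, this collapse sequence restricts to a homotopy equivalence between $B \setminus \nu(K)$ and a $2$-complex whose $1$-skeleton can be generated, via a van Kampen / Wirtinger-style accounting, by one meridian per edge of $K$. Each interior edge of $K$ contributes at most one generator and each collapse step that removes a triangle incident to $K$ contributes at most one relation, giving a presentation with at most $m$ generators.

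For the shellable/constructible bound (Ehrenborg--Hachimori, Hachimori--Shimokawa), the plan is to build a presentation inductively along the shelling order. The key observation is that each shelling step that adds a tetrahedron meeting $K$ can be matched with a Tietze move that either introduces one generator or identifies two; a careful pairing across the shelling shows that a knot on $m$ edges uses up at most $\lfloor m/2 \rfloor + 1$ generators in the final presentation. The VD bound (Hachimori--Ziegler) refines this: a shedding vertex of a triangulated $3$-ball can be incident to at most three edges of $K$ (otherwise the link already carries a nontrivial knot on fewer edges, and induction kicks in), and the recursive decomposition of the VD structure into $\del(v, \cdot)$ and $\link(v, \cdot)$ lets one amortize three knot-edges per shedding step, yielding the $\lfloor m/3 \rfloor + 1$ bound.

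The main obstacle in each case is the Tietze-transformation bookkeeping: one must distinguish edges of $K$ lying on $\partial B$ from those in the interior, handle the case where the current combinatorial step is disjoint from $K$ (no change in the presentation), and verify that the chosen van Kampen decomposition is compatible with the shelling/shedding/collapsing order. This is precisely where the cited references expend most of their technical effort; my proposal is to import those arguments verbatim rather than redo them. For sharpness, I would cite the explicit examples: Hachimori--Ziegler exhibit shellable and vertex-decomposable $3$-balls realizing the $t=1$ bounds, and Benedetti--Ziegler give LC spheres carrying $\mathfrak{L}_t$ on $t+1$ edges for every $t$. Table~\ref{table:nonlin} of the present paper also illustrates LC-sharpness: $S_{16, 92}$ carries a double trefoil on $3$ edges and is LC ($t < m$), whereas $S_{18, 125}$ carries a triple trefoil on $3$ edges and fails to be LC ($t = m$).
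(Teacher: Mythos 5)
The paper does not actually prove Theorem~\ref{thm:C}: it is stated as background and attributed entirely to the cited references \cite{HZ,EH,HS,BZ}, so your plan to ``import those arguments verbatim'' coincides with what the authors do. Your description of the mechanism is essentially right for the third bullet: local constructibility yields a presentation of the knot group with at most $m$ generators, while Goodrick \cite{GOO} shows the group of $\mathfrak{L}_t$ needs at least $t+1$, so LC forces $t+1\le m$, i.e.\ $t\le m-1$.

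There is, however, an off-by-one in your generator counts for the first two bullets that makes the claimed thresholds unreachable. If shellability/constructibility only guaranteed a presentation with $\lfloor m/2\rfloor+1$ generators (and vertex-decomposability one with $\lfloor m/3\rfloor+1$), the pigeonhole would exclude the property only for $t\ge\lfloor m/2\rfloor+1$ (resp.\ $t\ge\lfloor m/3\rfloor+1$), strictly weaker than the theorem. The results of \cite{EH,HS,HZ} are phrased via the \emph{bridge index}: a constructible $3$-ball or $3$-sphere containing a knot on $m$ edges forces the bridge index to be at most $\lfloor m/2\rfloor$ (at most $\lfloor m/3\rfloor$ in the vertex-decomposable case), while Schubert's additivity gives $b(\mathfrak{L}_t)=t+1$; since a $b$-bridge knot group is generated by $b$ meridians, the correct generator bound is $\lfloor m/2\rfloor$, not $\lfloor m/2\rfloor+1$, and the thresholds $t\ge\lfloor m/2\rfloor$ and $t\ge\lfloor m/3\rfloor$ then come out as stated. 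A second, smaller inaccuracy: the sphere case does not reduce to the ball case by deleting a tetrahedron disjoint from the knot (constructibility of a sphere is not known to pass to an arbitrary such ball); the cited papers handle spheres directly, e.g.\ by splitting $S=C_1\cup C_2$ along the constructible $2$-sphere $C_1\cap C_2$ and tracking how the knot edges distribute over the pieces. Your sharpness citations and the consistency check against $S_{16,92}$ and $S_{18,125}$ are correct.
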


\begin{thm} [Benedetti] \label{thm:D}
Let $S$ be a $3$-sphere with a subcomplex of $m$ edges, isotopic to the sum of $t$ trefoil knots. For \emph{any} discrete Morse function $f$ on $S$, one has
\[
c_{2} (f) \ge t - m + 1. 
\]  
\end{thm}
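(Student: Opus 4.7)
My plan is to argue by induction on $k := c_2(f)$, using the LC bound from Theorem~\ref{thm:C} for the base case and Morse-theoretic cancellations for the inductive step. A brief normalization first: cancellations of $0$-$1$ and $2$-$3$ critical pairs can only decrease $c_2(f)$, so it suffices to prove the inequality when $c_0(f) = c_3(f) = 1$; in this case the Euler characteristic of $S^3$ forces $c_1(f) = c_2(f) = k$.

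For the base case $k = 0$ the Morse function is perfect, so removing the unique critical tetrahedron leaves a $3$-ball that collapses onto a vertex; thus $S$ is locally constructible. Theorem~\ref{thm:C} then yields $t \le m - 1$, equivalently $c_2(f) = 0 \ge t - m + 1$.

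For the inductive step $k \ge 1$, I would select any critical $1$-$2$ pair $(\sigma, \Sigma)$. If the gradient path from $\Sigma$ to $\sigma$ in the collapse matching of $f$ is unique, standard discrete Morse theory allows us to cancel the pair, producing a Morse function $f'$ on the same $S$ with $c_2(f') = k - 1$; the knot $K$ is untouched, so the inductive hypothesis gives $k - 1 \ge t - m + 1$, whence $k \ge t - m + 2$. If the gradient path is not unique, I would perform a local bistellar flip near $\Sigma$ that enforces uniqueness while adding at most one new edge to the simplicial realization of $K$. This produces a $3$-sphere $S'$ containing an isotopic copy $K'$ of $K$ on $m + 1$ edges, together with a Morse function $f'$ with $c_2(f') = k - 1$. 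Applying the inductive hypothesis to $(S', K', f')$ yields $k - 1 \ge t - (m+1) + 1 = t - m$, whence $k \ge t - m + 1$.

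The main technical obstacle is the flip step: one must exhibit a concrete local modification of $S$ that resolves non-uniqueness of the gradient path, preserves the $3$-sphere structure, and grows the realization of the knot by at most one edge. This calls for a careful combinatorial analysis of the gradient vector field in a neighborhood of each critical $2$-cell, together with a verification that a single well-chosen bistellar flip always suffices to reduce the number of competing gradient paths, and that it can be executed inside a simplex of $S$ meeting $K$ in at most one edge.
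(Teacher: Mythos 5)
The paper does not prove Theorem~\ref{thm:D}; it is quoted as background from \cite{Benedetti-DMT4MWB}, where the argument is a Lickorish-style one: the discrete Morse function is used to build a presentation of the knot group $\pi_1(|S|\setminus|K|)$ with at most $m+c_2(f)$ generators, and Goodrick's theorem (the group of a sum of $t$ trefoils admits no presentation with $t$ generators) then forces $m+c_2(f)\ge t+1$. Your base case is fine --- it is exactly the LC bound of Theorem~\ref{thm:C}, which is itself proved by this knot-group argument --- and the normalization to $c_0=c_3=1$ is acceptable modulo a citation for the fact that superfluous critical vertices and tetrahedra on a connected closed $3$-manifold can always be cancelled.

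The inductive step, however, has a genuine gap, and it is located exactly where you flagged ``the main technical obstacle.'' Forman's cancellation theorem applies only when the gradient path between the chosen critical pair is \emph{unique}, and nothing guarantees that any critical $1$--$2$ pair with a unique path exists: the Morse boundary map $\partial_2$ being an isomorphism only yields some pair with nonzero incidence number, which is compatible with several gradient paths (or with paths whose signs cancel for other pairs). Your fallback --- ``a single well-chosen bistellar flip near $\Sigma$ enforces uniqueness while adding at most one edge to $K$'' --- is asserted, not proved, and it is implausible as stated: two gradient paths emanating from $\partial\Sigma$ can diverge and reconvene arbitrarily far from $\Sigma$, so no local modification at $\Sigma$ controls them; a flip changes the underlying complex, so $f$ does not transfer and you would have to construct $f'$ on $S'$ from scratch and verify $c_2(f')=k-1$; and there is no reason the flip can be kept away from $K$ or made to cost only one extra knot edge. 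Iterated, your step would convert any Morse function with $k$ critical triangles into a perfect one at the price of lengthening the knot by at most $k$ edges --- but that assertion \emph{is} the theorem, so the induction is circular in substance. To repair the proof you should abandon cancellation and instead extract a group presentation directly from the given (unmodified) Morse function, as in \cite{Benedetti-DMT4MWB}.
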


\section{The single trefoil} \label{sec:1}
In this section, we study the $3$-ball $B_{12,38}$ introduced in \cite{LUTZ1} and given by the following $38$ facets:
{\small
\[
\begin{array}{l@{\hspace{3.5mm}}l@{\hspace{3.5mm}}l@{\hspace{3.5mm}}l@{\hspace{3.5mm}}l@{\hspace{3.5mm}}l@{\hspace{3.5mm}}l@{\hspace{3.5mm}}l}
  2\,3\,4\,7,   & 2\,3\,4\,10,  & 2\,3\,7\,10,  & 2\,4\,5\,7,   & 
  2\,4\,5\,10,  & 2\,5\,7\,13,  & 2\,5\,8\,10,  & 2\,5\,8\,13,  \\

  2\,6\,9\,11,  & 2\,6\,11\,13, & 2\,6\,12\,13, & 2\,7\,8\,10,  & 
  2\,7\,8\,11,  & 2\,7\,11\,13, & 2\,8\,9\,11,  & 2\,8\,9\,12,  \\

  2\,8\,12\,13, & 3\,4\,6\,7,   & 3\,4\,6\,10,  & 3\,5\,8\,13,  &
  3\,5\,9\,11,  & 3\,5\,9\,13,  & 3\,6\,7\,12,  & 3\,6\,10\,13, \\

  3\,6\,12\,13, & 3\,7\,10\,12, & 3\,8\,9\,11,  & 3\,8\,9\,12,  &
  3\,8\,12\,13, & 3\,9\,10\,12, & 3\,9\,10\,13, & 4\,5\,6\,7,   \\

  4\,5\,6\,10,  & 5\,6\,7\,9,   & 5\,6\,9\,11,  & 5\,6\,10\,11, & 
  5\,7\,9\,13,  & 6\,10\,11\,13.
\end{array}
\]
}%
The ball is contructed in a way such that the edge $2\,3$ is a knotted spanning edge for $B_{12,38}$, the knot being a single trefoil. In particular, by Theorem~\ref{thm:A}, $B_{12,38}$ is not shellable, not constructible and not LC. Here we show that:
\begin{compactenum}[(1)]
\item $B_{12,38}$ is not rectilinearly-embeddable in $\mathbb{R}^3$;
\item $B_{12,38}$ is evasive;
\item $B_{12,38}$ is collapsible;
\item The $3$-sphere $\partial (1 \ast B_{12,38})$ minus 
the facet $1\,2\,6\,9$ is an LC knotted $3$-ball.
\end{compactenum}

\pagebreak

\begin{prop}
$B_{12,38}$ is not rectilinearly-embeddable in $\mathbb{R}^3$. 
\end{prop}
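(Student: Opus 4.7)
The plan is to apply Theorem~\ref{thm:embed} of Adams et al.\ with $t=1$. That theorem rules out rectilinear embeddability as soon as the trefoil knot $\mathfrak{L}_1$ is realized as a cycle in the $1$-skeleton of $B_{12,38}$ using at most $2\cdot 1+3 = 5$ edges. The knot in question is the closed curve obtained by joining the interior knotted spanning edge $\{2,3\}$ with a simple path on the boundary $\partial B_{12,38}$ between $2$ and $3$. As noted just after the definition of $\mathfrak{L}$-knotted spanning edge, the isotopy class of this closed curve is independent of which boundary path is chosen, so it suffices to exhibit any single boundary path of length at most $4$.

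The concrete step is therefore to extract the boundary complex from the facet list (the set of $2$-faces lying in exactly one tetrahedron), restrict to its $1$-skeleton, and perform a breadth-first search from~$2$ to~$3$. Since $B_{12,38}$ has only $12$ vertices and (as recalled in the introduction) exactly one of them is interior, almost all vertices are candidates for an intermediate stop on the boundary; in particular the common neighbors of $2$ and $3$ visible in the facet list (such as $4$, $6$, $7$, $8$, $10$, $12$, $13$) provide several short candidate paths, and I would expect to find a length-$2$ or length-$3$ boundary path through one of them.

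The only obstacle is this finite combinatorial verification on the boundary graph, which is routine given the explicit facet list. Once a boundary path of length $\le 4$ is exhibited, the union of that path with the edge $\{2,3\}$ realizes the trefoil with $e \le 5$ edges, and Theorem~\ref{thm:embed} immediately yields that $B_{12,38}$ is not rectilinearly embeddable in $\mathbb{R}^3$.
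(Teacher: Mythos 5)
Your proposal is correct and follows essentially the same route as the paper: exhibit a short boundary path from $2$ to $3$, close it up with the knotted spanning edge $2\,3$, and invoke Theorem~\ref{thm:embed} with $t=1$, $e\le 5$. One small correction to your expectations: $2$ and $3$ have no common boundary neighbors and no length-$3$ boundary path joins them (the candidates you list, such as $4$ and $7$, are joined to $2$ or $3$ only by interior edges), so the shortest boundary path has length $4$ --- the paper uses $2$--$6$--$7$--$8$--$3$ --- which still gives a pentagonal trefoil with $e=5\le 2\cdot 1+3$ as required.
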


\begin{proof}
The boundary of $B_{12,38}$ consists of the following $18$ triangles:
{\small
\[
\begin{array}{l@{\hspace{3.5mm}}l@{\hspace{3.5mm}}l@{\hspace{3.5mm}}l@{\hspace{3.5mm}}l@{\hspace{3.5mm}}l@{\hspace{3.5mm}}l@{\hspace{3.5mm}}l@{\hspace{3.5mm}}l}
  2\,6\,9,   & 2\,6\,12,  & 2\,9\,12,  & 3\,5\,8 ,    & 3\,5\,11, & 
  3\,8\,11,  & 5\,8\,10,  & 5\,10\,11, & 6\,7\,9,     \\

  6\,7\,12,  & 7\,8\,10 , & 7\,8\,11,  & 7\,9\,13,    & 7\,10\,12,  &
  7\,11\,13, & 9\,10\,12, & 9\,10\,13, & 10\,11\,13. \\
\end{array}
\]
}%
In particular, the four edges $2\,6$, $6\,7$, $7\,8$ and $3\,8$ 
form a boundary path from the vertex $2$ to the vertex $3$.
Together with the interior edge $2\,3$, this path closes up to 
a pentagonal trefoil knot. By Theorem~\ref{thm:embed}, 
$B_{12,38}$ cannot be rectilinearly embedded in $\mathbb{R}^3$,
because the stick number of the trefoil knot is $6$.
\end{proof}

\begin{prop} \label{prop:B12collapsible}
$B_{12,38}$ is collapsible, but not LC.
\end{prop}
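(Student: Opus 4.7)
The proposition has two parts, and they need quite different treatments.

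The non-LC half is essentially immediate from the apparatus already set up. Recall from the construction that the interior edge $2\,3$ is a spanning arc of length one whose closure on the boundary is a pentagonal trefoil, that is, an $\mathfrak{L}_1$-knotted spanning arc of $m=1$ edge. Theorem~\ref{thm:A} (or equivalently the third bullet of Theorem~\ref{thm:C} with $t=m=1$) then asserts that a $3$-ball containing an $\mathfrak{L}_t$-knotted spanning arc of $t$ edges cannot be LC. Applying this with $t=1$ finishes this half in one line.

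The collapsibility half is the genuine content and, as the authors hint in the introduction (\emph{``we tried, using the computer, several collapsing sequences, until we found a lucky one''}), is not established by theory but by producing an explicit sequence of elementary collapses that reduces $B_{12,38}$ to a point. My plan would be: (i) identify a triangle $\sigma$ that is free in exactly one of the $38$ tetrahedra $\Sigma$, so that $(\sigma,\Sigma)$ is an admissible collapse pair; (ii) iterate, each time removing from the current $3$-complex a pair $(\sigma,\Sigma)$ where $\sigma$ is a free $2$-face of $\Sigma$; (iii) once all tetrahedra have been peeled off, continue collapsing pairs (edge, triangle) and (vertex, edge) on the remaining $2$-complex until a single vertex is left. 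Since the candidate collapse pairs at each step are easy to enumerate from the facet list, the verification, once a sequence is found, is purely mechanical and does not require any topological argument.

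The real obstacle is \emph{finding} such a sequence rather than verifying one. The presence of the knotted spanning edge $2\,3$ ensures that many natural greedy strategies (always collapse the lowest-indexed free face, always collapse along the boundary first, etc.) will get stuck: they run into a subcomplex with no free face that contains the knot. Experience with Bing-type balls suggests two heuristics that often work and that I would try in parallel. First, collapse \emph{through the knot}: look for a free triangle on a tetrahedron incident to the edge $2\,3$, so that $2\,3$ is eliminated early; once the knotted edge is gone, the obstruction disappears and greedy collapsing typically succeeds. Second, run a randomized search with backtracking, biased to postpone tetrahedra that share two faces with the already-collapsed region (these tend to produce dead ends later). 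Either way, the output is a list of $38$ collapse pairs in the face poset, which can then be printed verbatim and checked by inspection (or by a short script); this is precisely the form a published proof should take.

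Finally, I would remark that the two halves together give the first known example of a collapsible $3$-ball that is not LC with only $12$ vertices, sharpening the Lickorish--Martin / Hamstrom--Jerrard examples cited in the introduction, and setting up the evasiveness discussion that occupies the rest of the section.
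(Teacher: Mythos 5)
Your treatment of the non-LC half matches the paper: the interior edge $2\,3$ is an $\mathfrak{L}_1$-knotted spanning arc of one edge, and Theorem~\ref{thm:A} with $t=1$ immediately rules out local constructibility. One caveat: your parenthetical claim that this is ``equivalently the third bullet of Theorem~\ref{thm:C} with $t=m=1$'' is not right. Theorem~\ref{thm:C} counts \emph{all} edges of the knotted subcomplex, and in $B_{12,38}$ the trefoil is realized by the edge $2\,3$ together with a four-edge boundary path, so $m=5$ and the hypothesis $t\ge m$ fails. The spanning-arc formulation of Theorem~\ref{thm:A}, which counts only the interior edges of the arc, is the one that applies; the two statements are not interchangeable here.

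The collapsibility half is where the proposal falls short of a proof. You correctly identify that the only available argument is an explicit certificate --- a complete list of elementary collapse pairs --- and that such a certificate is found by (randomized) search and then verified mechanically; this is exactly what the paper does, citing its randomized discrete Morse approach. But you stop at describing the search strategy and asserting that heuristics ``often work,'' without exhibiting the sequence or even confirming that a search terminated successfully on this particular complex. That is a genuine gap, not a stylistic one: collapsibility of a specific triangulation with a single-trefoil knotted spanning edge cannot be deduced from general theory (Bing-type balls with more complicated knots are provably non-collapsible, and even for the single trefoil only the \emph{possibility} of collapsibility is known from Lickorish--Martin), so the certificate is the entire content of this half of the proposition. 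A further small inaccuracy: the certificate is not ``a list of $38$ collapse pairs'' --- the $38$ triangle-tetrahedron pairs are only the first phase, after which one must still collapse the remaining $2$-complex down to a vertex via edge-triangle and vertex-edge pairs, and it is precisely in these later phases (and in the choice of which boundary triangles to sacrifice early, since the ball does not collapse onto its boundary minus a triangle) that naive strategies get stuck.
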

 
\begin{proof}
By Theorem~\ref{thm:A}, $B$ is not LC; in particular, 
$B$ does not collapse onto its boundary minus a triangle. 
So, in the first phase of the collapse (the one in which 
the tetrahedra are collapsed away) we have to remove 
several boundary triangles in order to succeed. Now, 
\emph{finding} a collapse can be difficult, but \emph{verifying} 
the correctness of a given collapse is fast. 
The following is a certificate of the collapsibility of $B_{12,38}$.

\vspace{3mm}

\noindent {\footnotesize First phase (pairs ``triangle'' $\rightarrow$ ``tetrahedron''):
\setlength{\arraycolsep}{0.1em}
\[\!\!
\begin{array}{r@{\hspace{2mm}}l@{\hspace{2mm}}l@{\hspace{4mm}}r@{\hspace{2mm}}l@{\hspace{2mm}}l@{\hspace{4mm}}r@{\hspace{2mm}}l@{\hspace{2mm}}l@{\hspace{4mm}}r@{\hspace{2mm}}l@{\hspace{2mm}}l@{\hspace{4mm}}r@{\hspace{2mm}}l@{\hspace{2mm}}l}
 10\,11\,13 & \rightarrow & 6\,10\,11\,13, &
 7\,9\,13   & \rightarrow & 5\,7\,9\,13,   &
 6\,10\,11  & \rightarrow & 5\,6\,10\,11,  &
 5\,6\,11   & \rightarrow & 5\,6\,9\,11,   &
 2\,6\,12   & \rightarrow & 2\,6\,12\,13,  \\

 5\,7\,9    & \rightarrow & 5\,6\,7\,9,    &
 9\,10\,12  & \rightarrow & 3\,9\,10\,12,  &
 7\,11\,13  & \rightarrow & 2\,7\,11\,13,  &
 5\,9\,11   & \rightarrow & 3\,5\,9\,11,   &
 2\,7\,13   & \rightarrow & 2\,5\,7\,13,   \\

 3\,9\,12   & \rightarrow & 3\,8\,9\,12,   &
 2\,6\,13   & \rightarrow & 2\,6\,11\,13,  &
 3\,8\,12   & \rightarrow & 3\,8\,12\,13,  &
 3\,9\,11   & \rightarrow & 3\,8\,9\,11,   &
 7\,10\,12  & \rightarrow & 3\,7\,10\,12,  \\

 8\,9\,12   & \rightarrow & 2\,8\,9\,12,   &
 6\,10\,13  & \rightarrow & 3\,6\,10\,13,  &
 3\,5\,8    & \rightarrow & 3\,5\,8\,13,   &
 6\,9\,11   & \rightarrow & 2\,6\,9\,11,   &
 8\,12\,13  & \rightarrow & 2\,8\,12\,13,  \\

 3\,6\,13   & \rightarrow & 3\,6\,12\,13,  &
 3\,10\,13  & \rightarrow & 3\,9\,10\,13,  &
 3\,5\,13   & \rightarrow & 3\,5\,9\,13,   &
 6\,7\,12   & \rightarrow & 3\,6\,7\,12,   &
 3\,6\,7    & \rightarrow & 3\,4\,6\,7,    \\

 5\,6\,7    & \rightarrow & 4\,5\,6\,7,    &
 7\,8\,11   & \rightarrow & 2\,7\,8\,11,   &
 2\,9\,11   & \rightarrow & 2\,8\,9\,11,   &
 3\,4\,6    & \rightarrow & 3\,4\,6\,10,   &
 4\,5\,7    & \rightarrow & 2\,4\,5\,7,    \\

 5\,6\,10   & \rightarrow & 4\,5\,6\,10,   &
 3\,4\,10   & \rightarrow & 2\,3\,4\,10,   &
 2\,4\,7    & \rightarrow & 2\,3\,4\,7,    &
 2\,3\,7    & \rightarrow & 2\,3\,7\,10,   &
 5\,8\,10   & \rightarrow & 2\,5\,8\,10,   \\

 5\,8\,13   & \rightarrow & 2\,5\,8\,13,   &
 7\,8\,10   & \rightarrow & 2\,7\,8\,10,   &
 2\,4\,5    & \rightarrow & 2\,4\,5\,10.   &
\end{array}
\]

\noindent Second phase (pairs ``edge'' $\rightarrow$ ``triangle''):
\setlength{\arraycolsep}{0.15em}
\[\!\!\!\!\!\!\!
\begin{array}{r@{\hspace{2mm}}l@{\hspace{2mm}}l@{\hspace{4mm}}r@{\hspace{2mm}}l@{\hspace{2mm}}l@{\hspace{4mm}}r@{\hspace{2mm}}l@{\hspace{2mm}}l@{\hspace{4mm}}r@{\hspace{2mm}}l@{\hspace{2mm}}l@{\hspace{4mm}}r@{\hspace{2mm}}l@{\hspace{2mm}}l@{\hspace{4mm}}r@{\hspace{2mm}}l@{\hspace{2mm}}l}
 8\,12  & \rightarrow & 2\,8\,12,  &
 7\,8   & \rightarrow & 2\,7\,8,   &
 7\,13  & \rightarrow & 5\,7\,13,  &
 8\,10  & \rightarrow & 2\,8\,10,  &
 9\,11  & \rightarrow & 8\,9\,11,  &
 7\,9   & \rightarrow & 6\,7\,9,   \\

 10\,11 & \rightarrow & 5\,10\,11, &
 7\,11  & \rightarrow & 2\,7\,11,  &
 5\,8   & \rightarrow & 2\,5\,8,   &
 9\,12  & \rightarrow & 2\,9\,12,  &
 7\,12  & \rightarrow & 3\,7\,12,  &
 5\,11  & \rightarrow & 3\,5\,11,  \\

 3\,5   & \rightarrow & 3\,5\,9,   &
 5\,7   & \rightarrow & 2\,5\,7,   &
 10\,12 & \rightarrow & 3\,10\,12, &
 3\,11  & \rightarrow & 3\,8\,11,  &
 6\,7   & \rightarrow & 4\,6\,7,   &
 4\,7   & \rightarrow & 3\,4\,7,   \\

 2\,7   & \rightarrow & 2\,7\,10,  &
 8\,11  & \rightarrow & 2\,8\,11,  &
 2\,12  & \rightarrow & 2\,12\,13, &
 10\,13 & \rightarrow & 9\,10\,13, &
 3\,4   & \rightarrow & 2\,3\,4,   &
 2\,3   & \rightarrow & 2\,3\,10,  \\

 7\,10  & \rightarrow & 3\,7\,10,  &
 9\,10  & \rightarrow & 3\,9\,10,  &
 3\,10  & \rightarrow & 3\,6\,10,  &
 6\,10  & \rightarrow & 4\,6\,10,  &
 4\,6   & \rightarrow & 4\,5\,6,   &
 4\,5   & \rightarrow & 4\,5\,10,  \\

 2\,4   & \rightarrow & 2\,4\,10,  &
 3\,6   & \rightarrow & 3\,6\,12,  &
 2\,10  & \rightarrow & 2\,5\,10,  &
 3\,12  & \rightarrow & 3\,12\,13, &
 12\,13 & \rightarrow & 6\,12\,13, &
 2\,5   & \rightarrow & 2\,5\,13,  \\

 5\,6   & \rightarrow & 5\,6\,9,   &
 6\,13  & \rightarrow & 6\,11\,13, &
 5\,13  & \rightarrow & 5\,9\,13,  &
 11\,13 & \rightarrow & 2\,11\,13, &
 2\,13  & \rightarrow & 2\,8\,13,  &
 9\,13  & \rightarrow & 3\,9\,13,  \\

 6\,9   & \rightarrow & 2\,6\,9,   &
 3\,9   & \rightarrow & 3\,8\,9,   &
 3\,8   & \rightarrow & 3\,8\,13,  &
 2\,8   & \rightarrow & 2\,8\,9,   &
 6\,11  & \rightarrow & 2\,6\,11.  &
\end{array}
\]

\noindent Third phase (pairs ``vertex'' $\rightarrow$ ``edge''):
\setlength{\arraycolsep}{0.22em}
\[\!\!\!\!\!\!\!\!\!\!\!\!
\begin{array}{rl@{\hspace{2mm}}l@{\hspace{3mm}}r@{\hspace{2mm}}l@{\hspace{2mm}}l@{\hspace{3mm}}r@{\hspace{2mm}}l@{\hspace{2mm}}l@{\hspace{3mm}}r@{\hspace{2mm}}l@{\hspace{2mm}}l@{\hspace{3mm}}r@{\hspace{2mm}}l@{\hspace{2mm}}l@{\hspace{3mm}}r@{\hspace{2mm}}l@{\hspace{2mm}}l@{\hspace{3mm}}r@{\hspace{2mm}}l@{\hspace{2mm}}l@{\hspace{3mm}}r@{\hspace{2mm}}l@{\hspace{2mm}}l@{\hspace{3mm}}r@{\hspace{2mm}}l@{\hspace{2mm}}l}
 12 & \rightarrow & 6\,12, &
 4  & \rightarrow & 4\,10, &
 6  & \rightarrow & 2\,6,  &
 10 & \rightarrow & 5\,10, &
 11 & \rightarrow & 2\,11, &
 5  & \rightarrow & 5\,9,  &
 7  & \rightarrow & 3\,7,  &
 2  & \rightarrow & 2\,9,  &
 9  & \rightarrow & 8\,9,  \\

 3  & \rightarrow & 3\,13, &
 13 & \rightarrow & 8\,13. &
\end{array}
\]
}
\end{proof}

The above collapsing sequence was found with the randomized approach of \cite{BenedettiLutz2012bpre}.

\begin{prop} \label{prop:B12evasive}
$B_{12,38}$ is evasive. 
\end{prop}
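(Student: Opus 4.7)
The plan is to exploit the recursive definition of non-evasiveness to obtain a purely topological obstruction that can be checked by computer. The key observation is as follows. If a complex $C$ on vertex set $V$ is non-evasive, then unwinding the recursion produces a sequence of distinct vertices $v_1, v_2, \ldots, v_{|V|-1}$ such that, at each stage $i$, the iterated deletion $\del(v_i, \del(v_{i-1}, \ldots, \del(v_1, C) \cdots))$ is itself non-evasive. Iterated deletion of a set $\{v_1, \ldots, v_i\}$ does not depend on the order in which the vertices are removed, and coincides with the induced subcomplex of $C$ on $V \setminus \{v_1, \ldots, v_i\}$. Since every non-evasive complex is contractible, we obtain the following \emph{necessary} condition for $C$ to be non-evasive: for every $k$ with $1 \le k \le |V|-1$, there must exist a $k$-element subset $W \subseteq V$ such that $C[V \setminus W]$ has the reduced rational homology of a point.

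To prove that $B_{12,38}$ is evasive, I would violate this necessary condition at a suitable level $k$. Following the hint in the introduction, I would take $k = 5$, so that there are $\binom{12}{5} = 792$ seven-vertex induced subcomplexes to test. For each of these, I would compute the rational reduced homology with a standard simplicial-topology package. If every one of the $792$ restrictions has non-trivial reduced homology, then the necessary condition already fails at the fifth step of the recursion, and $B_{12,38}$ is evasive.

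The principal obstacle is the gap between the clean statement one would like to prove and the more delicate state of affairs flagged in the introduction, where only ``almost all'' (rather than all) of the $792$ restrictions are reported as non-acyclic. For each exceptional $5$-subset $W$ whose restriction $B_{12,38}[V \setminus W]$ turns out to be acyclic, some additional case analysis is required. Two natural options present themselves: one can check directly on that small seven-vertex complex, by recursing one level deeper and examining all of its six-vertex induced subcomplexes, that no non-evasiveness certificate exists; or one can enlarge $W$ by one vertex and verify that every six-element extension of $W$ again yields a restriction with non-trivial homology. Since only finitely many exceptional sets can arise and each derived subcomplex has at most seven vertices, this residual analysis can be dispatched computationally and closes the proof. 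A secondary, more conceptual subtlety is that we are only using the weaker half of the definition of non-evasiveness (the one concerning $\del(v,\cdot)$, ignoring $\link(v,\cdot)$); this is of no harm here, since the weaker necessary condition already fails.
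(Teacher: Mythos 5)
Your proposal is correct and follows essentially the same route as the paper: the authors also derive the necessary condition that some $5$-element deletion must leave an acyclic (indeed non-evasive) induced subcomplex, check all $\binom{12}{5}=792$ cases by computer, find exactly three acyclic exceptions $F_1=\{4,5,8,10,11\}$, $F_2=\{4,5,10,11,12\}$, $F_3=\{4,6,7,9,12\}$, and dispose of them precisely by your second option (verifying that every further single-vertex deletion from each exceptional restriction is non-acyclic).
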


\begin{proof} Let us establish some notation first. We identify each vertex of $B_{12,38}$ with its label, which is an integer in $A:=\{2, \ldots, 13\}$. For each subset $S$ of $A$, we denote by $C_S$ the complex obtained from $B_{12,38}$ by deleting the vertices in $S$. 

Now, suppose by contradiction that $B$ is non-evasive. The vertices of $B_{12,38}$ can be reordered so that their progressive deletions and links are non-evasive. In particular, there exists a five-element subset $F$ of $A$ such that $C_F$ is non-evasive.  

With the help of a computer program, we checked the homologies of all complexes obtained by deleting five vertices from $B$. Since the order of deletion does not matter, there are only $\binom{12}{5} = 792$ cases to check, so the computation is extremely fast. It turns out that these homologies are never trivial, except for the following three cases:
\begin{compactenum}[(1)]
\item $F_1 = \{4,5,8,10,11\}$, 
\item $F_2 = \{4,5,10,11,12\}$,
\item $F_3 = \{4,6,7,9,12\}$.
\end{compactenum}
So, the non-evasive complex $C_F$ whose existence was postulated above must be either $C_{F_1}$, or $C_{F_2}$, or $C_{F_3}$.
However, it is easy to see that the deletion of any vertex from $C_{F_1}$ yields a non-acyclic complex. The same holds for $C_{F_2}$ and $C_{F_3}$. Therefore, all three complexes $C_{F_1}$, $C_{F_2}$ and $C_{F_3}$ are evasive: A~contradiction.
\end{proof}

\begin{remark}
Let $S_B$ be the sphere obtained by coning off the boundary of $B_{12,38}$ with an extra vertex, 
labeled by~$1$. Let $\Sigma$ be the tetrahedron $1\,2\,6\,9$ and let $\sigma$ 
be its facet $2\,6\,9$.
With the help of the computer, one can check that $S_B - \Sigma$ collapses 
onto the $2$-ball $D$ consisting of the triangles $1\,2\,6$, $1\,2\,9$ and $1\,6\,9$. 
Since $D = \partial \Sigma - \sigma = \partial (S_B - \Sigma) - \sigma$, 
it follows that the knotted $3$-ball $S_B - \Sigma$ is locally constructible 
(because it collapses onto its boundary minus the triangle $\sigma$). 
For a proof, see \cite{Benedetti-diss}.
\end{remark}

\section{The double trefoil}
\label{sec:2}

  \begin{figure}
    \begin{center}
      \small
      \psfrag{1}{1}
      \psfrag{2}{2}
      \psfrag{3}{3}
      \psfrag{4}{4}
      \psfrag{5}{5}
      \psfrag{6}{6}
      \psfrag{7}{7}
      \psfrag{8}{8}
      \psfrag{9}{9}
      \psfrag{10}{10}
      \psfrag{11}{11}
      \psfrag{12}{12}
      \psfrag{13}{13}
      \psfrag{14}{14}
      \psfrag{15}{15}
      \psfrag{16}{16}
      \psfrag{17}{17}
      \psfrag{18}{18}
      \psfrag{19}{19}
      \psfrag{20}{20}
      \psfrag{21}{21}
      \psfrag{22}{22}
      \psfrag{23}{23}
      \psfrag{24}{24}
      \psfrag{25}{25}
      \psfrag{26}{26}
      \psfrag{27}{27}
      \psfrag{28}{28}
      \psfrag{29}{29}
      \psfrag{30}{30}
      \psfrag{31}{31}
      \psfrag{32}{32}
      \includegraphics[width=11.5cm]{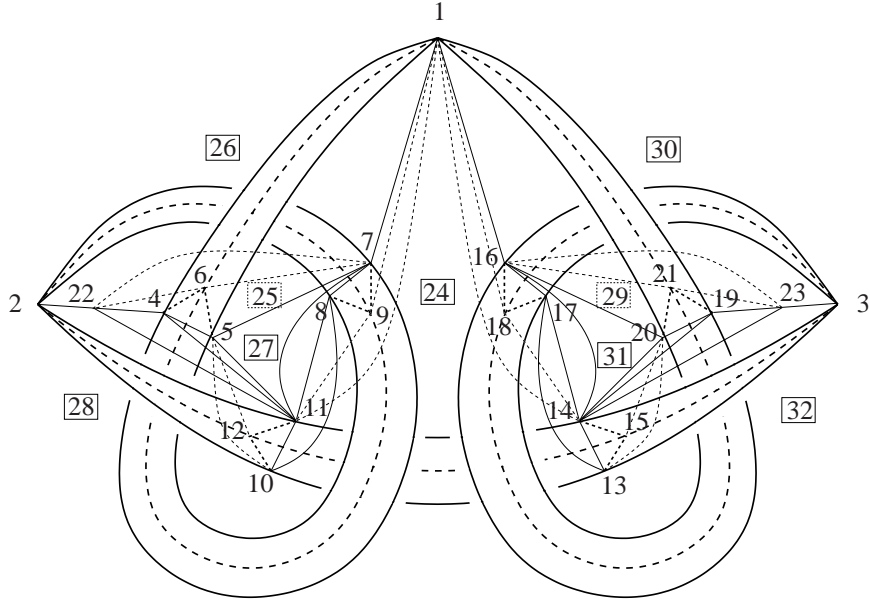}
    \end{center}
    \caption{The double trefoil in the sphere $S_{33,192}$.
      \label{fig:double_trefoil}}
  \end{figure}

In the following, we present the construction of a triangulated $3$-sphere  that contains a double trefoil knot 
on three edges in its $1$-skeleton. In fact, there are two different ways to form the connected sum of
two trefoil knots,  the granny and the square knot. We base our construction on the square knot. 

Let $1\,2$,  $2\,3$, $1\,3$ be the three edges forming the square knot, which, for our purposes, we simply call the \emph{double trefoil knot}. 
An embedding of the knot in ${\mathbb R}^3$ is depicted in Figure~\ref{fig:double_trefoil}.

Our strategy to place the knot into the $1$-skeleton of a triangulated $3$-dimensional sphere
is as follows. We 
\begin{compactitem}
\item start with an embedding of the knot in ${\mathbb R}^3$,
\item triangulate the region around the knot to obtain a triangulated $3$-ball,
\item complete it to a triangulation of $S^3$ by adding the cone over its boundary.
\end{compactitem}
Once the knot edges $1\,2$,  $2\,3$, $1\,3$ are placed in ${\mathbb R}^3$
we need to shield off these edges to prevent unwanted identifications of distant vertices
later on. We protect each of the knot edges by placing a spindle around it;
see Figure~\ref{fig:spindles_double_trefoil} for images of the spindles and
Table~\ref{tbl:spindles_double_trefoil} for lists of nine tetrahedra each, which form the three spindles.
The additional vertices on the boundaries of the spindles allow us to close the holes 
of the knot by gluing in (triangulated) membrane patches.

  \begin{figure}
    \begin{center}
      \small
      \psfrag{1}{1}
      \psfrag{2}{2}
      \psfrag{3}{3}
      \psfrag{4}{4}
      \psfrag{5}{5}
      \psfrag{6}{6}
      \psfrag{7}{7}
      \psfrag{8}{8}
      \psfrag{9}{9}
      \psfrag{10}{10}
      \psfrag{11}{11}
      \psfrag{12}{12}
      \psfrag{13}{13}
      \psfrag{14}{14}
      \psfrag{15}{15}
      \psfrag{16}{16}
      \psfrag{17}{17}
      \psfrag{18}{18}
      \psfrag{19}{19}
      \psfrag{20}{20}
      \psfrag{21}{21}
      \includegraphics[width=9.6cm]{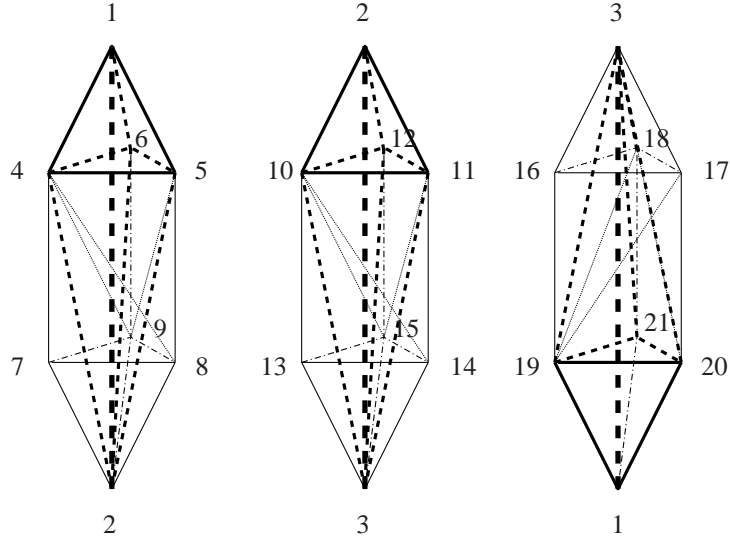}
    \end{center}
    \caption{The spindles of $S_{33,192}$.
      \label{fig:spindles_double_trefoil}}
  \end{figure}

\begin{table}[htp]
\small\centering
\defaultaddspace=0.6em
\caption{Part I of the sphere $S_{33,192}$: The three spindles.}\label{tbl:spindles_double_trefoil}
\begin{center}\footnotesize
\begin{tabular*}{\linewidth}{@{\extracolsep{\fill}}ll@{\hspace{30mm}}ll@{\hspace{30mm}}ll@{}}
\toprule
 \addlinespace
$1\,2\,4\,5$ & $2\,4\,7\,8$   &   $2\,3\,10\,11$ & $3\,10\,13\,14$   &   $1\,3\,19\,20$ & $3\,16\,17\,19$ \\
$1\,2\,4\,6$ & $2\,4\,5\,8$   &   $2\,3\,10\,12$ & $3\,10\,11\,14$   &   $1\,3\,19\,21$ & $3\,17\,19\,20$ \\
$1\,2\,5\,6$ & $2\,5\,8\,9$   &   $2\,3\,11\,12$ & $3\,11\,14\,15$   &   $1\,3\,20\,21$ & $3\,17\,18\,20$ \\
             & $2\,5\,6\,9$   &                  & $3\,11\,12\,15$   &                  & $3\,18\,20\,21$ \\
             & $2\,4\,7\,9$   &                  & $3\,10\,13\,15$   &                  & $3\,16\,18\,19$ \\
             & $2\,4\,6\,9$   &                  & $3\,10\,12\,15$   &                  & $3\,18\,19\,21$ \\
 \addlinespace
\bottomrule
\end{tabular*}
\end{center}
\end{table}

In Figure~\ref{fig:double_trefoil}, the diagonal edges on the boundaries of the spindles and also 
the interior edges of the spindles are not shown. All that we need at the moment are the vertices
on the boundaries of the spindles. For example, if we move along the left spindle $1$--$2$
from apex $1$ to apex $2$, we first meet the vertices $4$, $5$, $6$ 
and then the vertices $7$, $8$, $9$ on the spindle boundary. 

\begin{table}[t]
\small\centering
\defaultaddspace=0.6em
\caption{The triangles of the membranes in the sphere $S_{33,192}$.}\label{tbl:membranes_double_trefoil}
\begin{center}\footnotesize
\begin{tabular*}{\linewidth}{@{\extracolsep{\fill}}lllllll@{}}
\toprule
 \addlinespace
             &               &              &  $1\,11\,14$  &                &                &                \\
$4\,5\,11$   &  $1\,5\,7$    &  $1\,9\,11$  &               &  $1\,14\,18$   &  $1\,16\,20$   &  $14\,19\,20$  \\
$4\,11\,22$  &  $5\,7\,11$   &  $1\,7\,9$   &               &  $1\,16\,18$   &  $14\,16\,20$  &  $14\,19\,23$  \\
$2\,11\,22$  &  $7\,8\,11$   &  $8\,9\,11$  &               &  $14\,17\,18$  &  $14\,16\,17$  &  $3\,14\,23$   \\
$2\,7\,22$   &  $8\,10\,11$  &              &               &                &  $13\,14\,17$  &  $3\,16\,23$   \\
$6\,7\,22$   &  $5\,8\,10$   &              &               &                &  $13\,17\,20$  &  $16\,21\,23$  \\
$4\,6\,22$   &  $5\,10\,12$  &              &               &                &  $13\,15\,20$  &  $19\,21\,23$  \\
$5\,6\,7$    &  $5\,11\,12$  &              &               &                &  $14\,15\,20$  &  $16\,20\,21$  \\
 \addlinespace
\bottomrule
\end{tabular*}
\end{center}
\end{table}

The membrane patches can be read off from Table~\ref{tbl:membranes_double_trefoil}. The central triangle $1\,11\,14$
connects the left part with the right part of Figure~\ref{fig:double_trefoil} and contributes to the closure 
of the upper central hole. Next to the triangle $1\,11\,14$ on the left hand side in Figure~\ref{fig:double_trefoil}
is the triangle $1\,9\,11$ from the third column of Table~\ref{tbl:membranes_double_trefoil}, followed by triangle $1\,7\,9$  and so on.
Once all the membrane triangles of Table~\ref{tbl:membranes_double_trefoil} are in place in Figure~\ref{fig:double_trefoil}, 
the resulting complex is a mixed $2$- and $3$-dimensional simplicial complex,
consisting of spindle tetrahedra and membrane triangles. Since we closed all holes
of the initial double trefoil knot, the resulting complex is contractible.

\begin{table}[htp]
\small\centering
\defaultaddspace=0.6em
\caption{Part II of the sphere $S_{33,192}$: Tetrahedra to be added to Part I to obtain a ball $B_{32,140}$.}\label{tbl:local_cones}
\begin{center}\footnotesize
\begin{tabular*}{\linewidth}{@{\extracolsep{\fill}}lll@{\hspace{14mm}}l@{\hspace{14mm}}lll@{}}
\toprule
 \addlinespace
$4\,6\,24\,25$  & $1\,7\,9\,26$   & $1\,5\,26\,27$   & $1\,7\,9\,24$    & $19\,21\,24\,29$ & $1\,16\,18\,30$  & $1\,20\,30\,31$  \\
$5\,6\,24\,25$  & $2\,7\,9\,26$   & $5\,11\,26\,27$  & $1\,9\,11\,24$   & $20\,21\,24\,29$ & $3\,16\,18\,30$  & $14\,20\,30\,31$ \\
$5\,10\,24\,25$ & $1\,5\,6\,7$    & $10\,11\,26\,27$ & $8\,9\,11\,24$   & $13\,20\,24\,29$ & $1\,16\,20\,21$  & $13\,14\,30\,31$ \\
$5\,10\,12\,25$ & $1\,6\,7\,26$   & $8\,10\,11\,27$  & $8\,10\,11\,24$  & $13\,15\,20\,29$ & $1\,16\,21\,30$  & $13\,14\,17\,31$ \\
$5\,11\,12\,25$ & $6\,7\,22\,26$  & $7\,8\,11\,27$   & $5\,8\,10\,24$   & $14\,15\,20\,29$ & $16\,21\,23\,30$ & $14\,16\,17\,31$ \\
$5\,7\,11\,25$  & $2\,7\,22\,26$  & $5\,7\,11\,27$   & $5\,6\,9\,24$    & $14\,16\,20\,29$ & $3\,16\,23\,30$  & $14\,16\,20\,31$ \\
$7\,8\,11\,25$  & $1\,4\,6\,26$   & $1\,5\,7\,27$    & $5\,8\,9\,24$    & $14\,16\,17\,29$ & $1\,19\,21\,30$  & $1\,16\,20\,31$  \\
$8\,9\,11\,25$  & $4\,6\,22\,26$  &                  & $4\,6\,9\,24$    & $14\,17\,18\,29$ & $19\,21\,23\,30$ &                  \\
$5\,6\,7\,25$   & $1\,4\,5\,26$   & $8\,10\,27\,28$  & $4\,7\,9\,24$    & $16\,20\,21\,29$ & $1\,19\,20\,30$  & $13\,17\,31\,32$ \\
$6\,7\,22\,25$  & $4\,5\,11\,26$  & $5\,8\,10\,28$   &                  & $16\,21\,23\,29$ & $14\,19\,20\,30$ & $13\,17\,20\,32$ \\
$4\,6\,22\,25$  & $4\,11\,22\,26$ & $5\,10\,12\,28$  & $1\,11\,14\,24$  & $19\,21\,23\,29$ & $14\,19\,23\,30$ & $13\,15\,20\,32$ \\
$2\,7\,22\,25$  & $2\,11\,22\,26$ & $5\,11\,12\,28$  & $10\,11\,14\,24$ & $3\,16\,23\,29$  & $3\,14\,23\,30$  & $14\,15\,20\,32$ \\
$2\,7\,8\,25$   & $2\,10\,11\,26$ & $4\,5\,11\,28$   & $10\,13\,14\,24$ & $3\,16\,17\,29$  & $3\,13\,14\,30$  & $14\,19\,20\,32$ \\
$2\,8\,9\,25$   &                 & $4\,11\,22\,28$  &                  & $3\,17\,18\,29$  &                  & $14\,19\,23\,32$ \\
                &                 & $2\,11\,22\,28$  & $1\,16\,18\,24$  &                  &                  & $3\,14\,23\,32$  \\
                &                 & $2\,11\,12\,28$  & $1\,14\,18\,24$  &                  &                  & $3\,14\,15\,32$  \\
                &                 & $2\,10\,12\,28$  & $14\,17\,18\,24$ &                  &                  & $3\,13\,15\,32$  \\
                &                 & $2\,10\,26\,28$  & $13\,14\,17\,24$ &                  &                  & $3\,13\,30\,32$  \\
                &                 & $10\,26\,27\,28$ & $13\,17\,20\,24$ &                  &                  & $13\,30\,31\,32$ \\
                &                 & $4\,5\,8\,28$    & $17\,18\,20\,24$ &                  &                  & $17\,19\,20\,32$ \\
                &                 &                  & $18\,20\,21\,24$ &                  &                  &                  \\
                &                 &                  & $16\,18\,19\,24$ &                  &                  &                  \\
                &                 &                  & $18\,19\,21\,24$ &                  &                  &                  \\
 \addlinespace
\bottomrule
\end{tabular*}
\end{center}
\end{table}

\pagebreak

Our next aim is to thicken the intermediate mixed $2$- and $3$-dimensional complex
to a triangulated $3$-ball $B_{32,140}$. For this end we add local cones to Figure~\ref{fig:double_trefoil}
with respect to the nine new vertices $24$, $25$, \dots, $32$. These cones are listed in Table~\ref{tbl:local_cones},
the positions of their apices are marked in Figure~\ref{fig:double_trefoil} by boxes containing the
new vertices. 

\begin{table}[t]
\small\centering
\defaultaddspace=0.6em
\caption{Part III of the sphere $S_{33,192}$: Cone over the boundary of the ball $B_{32,140}$.}\label{tbl:cone_over_boundary}
\begin{center}\footnotesize
\begin{tabular*}{\linewidth}{@{\extracolsep{\fill}}lllllll@{}}
\toprule
 \addlinespace
  $1\,7\,24\,33$   & $1\,7\,27\,33$   & $1\,9\,11\,33$   & $1\,9\,26\,33$   & $1\,11\,14\,33$  & $1\,14\,18\,33$  & $1\,16\,24\,33$  \\
  $1\,16\,31\,33$  & $1\,18\,30\,33$  & $1\,26\,27\,33$  & $1\,30\,31\,33$  & $2\,9\,25\,33$   & $2\,9\,26\,33$   & $2\,22\,25\,33$  \\ 
  $2\,22\,28\,33$  & $2\,26\,28\,33$  & $3\,18\,29\,33$  & $3\,18\,30\,33$  & $ 3\,23\,29\,33$ & $3\,23\,32\,33$  & $3\,30\,32\,33$  \\
  $4\,7\,8\,33$    & $4\,7\,24\,33$   & $4\,8\,28\,33$   & $4\,22\,25\,33$  & $4\,22\,28\,33$  & $4\,24\,25\,33$  & $7\,8\,27\,33$   \\ 
  $8\,27\,28\,33$  & $9\,11\,25\,33$  & $10\,12\,15\,33$ & $10\,12\,25\,33$ & $10\,13\,15\,33$ & $10\,13\,24\,33$ & $10\,24\,25\,33$ \\ 
  $11\,12\,15\,33$ & $11\,12\,25\,33$ & $11\,14\,15\,33$ & $13\,15\,29\,33$ & $13\,24\,29\,33$ & $14\,15\,29\,33$ & $14\,18\,29\,33$ \\
  $16\,17\,19\,33$ & $16\,17\,31\,33$ & $16\,19\,24\,33$ & $17\,19\,32\,33$ & $17\,31\,32\,33$ & $19\,23\,29\,33$ & $19\,23\,32\,33$ \\
  $19\,24\,29\,33$ & $26\,27\,28\,33$ & $30\,31\,32\,33$ &                  &                  &                  &                  \\
 \addlinespace
\bottomrule
\end{tabular*}
\end{center}
\end{table}

If we add together all the (spindle) tetrahedra from Table~\ref{tbl:spindles_double_trefoil} (Part I of the sphere $S_{33,192}$)
with all the (cone) tetrahedra from Table~\ref{tbl:local_cones} (Part II of the sphere $S_{33,192}$),
we obtain a triangulated $3$-ball $B_{32,140}$ with $32$ vertices and $140$ tetrahedral facets.
By construction, the $3$-ball $B_{32,140}$ contains the double trefoil knot in its $1$-skeleton
and all the membrane triangles in its $2$-skeleton.

\enlargethispage*{9mm}

In a final step, we add to the $3$-ball $B_{32,140}$ the cone over its boundary with respect the vertex $33$ 
(Part III of the sphere $S_{33,192}$ with tetrahedra as listed in Table~\ref{tbl:cone_over_boundary})
to obtain the $3$-sphere $S_{33,192}$.

\begin{table}[h]
\small\centering
\defaultaddspace=0.6em
\caption{The sphere $S_{16,92}$.}\label{tbl:double_trefoil_sphere}
\begin{center}\footnotesize
\begin{tabular*}{\linewidth}{@{\extracolsep{\fill}}llllllll@{}}
\toprule
 \addlinespace
  $1\,2\,5\,6$    & $1\,2\,5\,12$   & $1\,2\,6\,12$   & $1\,3\,7\,8$     & $1\,3\,7\,11$   & $1\,3\,8\,11$   & $1\,4\,5\,6$    & $1\,4\,5\,16$   \\
  $1\,4\,6\,12$   & $1\,4\,10\,13$  & $1\,4\,10\,16$  & $1\,4\,12\,13$   & $1\,5\,12\,13$  & $1\,5\,13\,16$  & $1\,7\,8\,9$    & $1\,7\,9\,11$   \\
  $1\,8\,9\,14$   & $1\,8\,10\,14$  & $1\,8\,10\,15$  & $1\,8\,11\,15$   & $1\,9\,11\,15$  & $1\,9\,14\,15$  & $1\,10\,13\,14$ & $1\,10\,15\,16$ \\ 
  $1\,13\,14\,16$ & $1\,14\,15\,16$ & $2\,3\,4\,13$   & $2\,3\,4\,15$    & $2\,3\,13\,15$  & $2\,4\,7\,8$    & $2\,4\,7\,15$   & $2\,4\,8\,16$   \\
  $2\,4\,10\,13$  & $2\,4\,10\,16$  & $2\,5\,6\,14$   & $2\,5\,12\,14$   & $2\,6\,8\,12$   & $2\,6\,8\,16$   & $2\,6\,9\,14$   & $2\,6\,9\,16$   \\
  $2\,7\,8\,9$    & $2\,7\,9\,10$   & $2\,7\,10\,13$  & $2\,7\,13\,15$   & $2\,8\,9\,14$   & $2\,8\,12\,14$  & $2\,9\,10\,16$  & $3\,4\,12\,13$  \\
  $3\,4\,12\,15$  & $3\,5\,6\,7$    & $3\,5\,6\,14$   & $3\,5\,7\,8$     & $3\,5\,8\,11$   & $3\,5\,11\,14$  & $3\,6\,7\,16$   & $3\,6\,9\,14$   \\
  $3\,6\,9\,16$   & $3\,7\,11\,14$  & $3\,7\,14\,16$  & $3\,9\,12\,13$   & $3\,9\,12\,16$  & $3\,9\,13\,15$  & $3\,9\,14\,15$  & $3\,12\,15\,16$ \\
  $3\,14\,15\,16$ & $4\,5\,6\,7$    & $4\,5\,7\,8$    & $4\,5\,8\,16$    & $4\,6\,7\,15$   & $4\,6\,12\,15$  & $5\,8\,11\,13$  & $5\,8\,13\,16$  \\
  $5\,11\,12\,13$ & $5\,11\,12\,14$ & $6\,7\,13\,15$  & $6\,7\,13\,16$   & $6\,8\,12\,15$  & $6\,8\,13\,15$  & $6\,8\,13\,16$  & $7\,9\,10\,12$  \\
  $7\,9\,11\,12$  & $7\,10\,12\,14$ & $7\,10\,13\,14$ & $7\,11\,12\,14$  & $7\,13\,14\,16$ & $8\,10\,12\,14$ & $8\,10\,12\,15$ & $8\,11\,13\,15$ \\
  $9\,10\,12\,16$ & $9\,11\,12\,13$ & $9\,11\,13\,15$ & $10\,12\,15\,16$ &                 &                 &                 &                 \\
 \addlinespace
\bottomrule
\end{tabular*}
\end{center}
\end{table}

\begin{prop}
The $3$-sphere $S_{33,192}$ consists of $192$ tetrahedra and $33$ vertices.
It has face vector $f=(33,225,384,192)$ 
and contains the double trefoil knot on three edges in its $1$-skeleton.
\end{prop}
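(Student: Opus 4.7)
The proposition combines three routine combinatorial verifications with one substantive geometric claim, so I would organize the proof accordingly. The first task is to confirm $f_0 = 33$ and $f_3 = 192$: the vertex labels appearing in Tables~\ref{tbl:spindles_double_trefoil}--\ref{tbl:cone_over_boundary} exhaust $\{1,\dots,33\}$, and adding up the rows of the three tables gives $27$ spindle tetrahedra in Part~I, $114$ tetrahedra in Part~II, and $51$ tetrahedra in Part~III, for a total of $192$ facets.

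For the remaining entries $f_1 = 225$ and $f_2 = 384$, I would enumerate all $2$-element (resp.\ $3$-element) subsets occurring inside the $192$ listed facets and deduplicate; this is a trivial computer task. As a built-in consistency check, the alternating sum $33 - 225 + 384 - 192 = 0$ equals the Euler characteristic of~$S^3$, as required. One would also want to verify that $S_{33,192}$ is genuinely a combinatorial $3$-sphere, which I would do by computing each of the $33$ vertex links and checking that all are $2$-spheres (again a routine task).

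The substantive claim is that the edges $12$, $23$, $13$ form a double trefoil in $S_{33,192}$. These three edges are present in the complex by inspection of Table~\ref{tbl:spindles_double_trefoil}: for example, $12$ is an edge of $1\,2\,4\,5$, and analogously for $23$ and~$13$. To identify the knot type, I would make precise the geometric picture underlying the construction: one starts with the geometric realization of the square knot in $\mathbb{R}^3$ shown in Figure~\ref{fig:double_trefoil}; the three spindles of Table~\ref{tbl:spindles_double_trefoil} embed as disjoint thickened tubular neighborhoods of the three knot arcs (Figure~\ref{fig:spindles_double_trefoil}); the membrane triangles of Table~\ref{tbl:membranes_double_trefoil} and the local cones of Table~\ref{tbl:local_cones} are placed so as to thicken the mixed $2$/$3$-complex into a polyhedral $3$-ball $B_{32,140}\subset\mathbb{R}^3$; finally, coning off the polyhedral boundary $2$-sphere with a point at infinity produces $S_{33,192}$ as a PL $3$-sphere in $S^3$. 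In this embedding the three edges $12$, $23$, $13$ literally trace the curve of Figure~\ref{fig:double_trefoil}, so the knot type is the square knot, which (as explained in the Background section) we treat as a double trefoil.

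The main obstacle is the middle step: checking that the spindles, membranes, and local cones can be simultaneously realized inside $\mathbb{R}^3$ without spurious intersections, producing a genuinely embedded simplicial $3$-ball whose $1$-skeleton contains the pictured knot. This is essentially a bookkeeping task guided by Figures~\ref{fig:double_trefoil} and~\ref{fig:spindles_double_trefoil} and by the boxed apex positions of the cone vertices $24,\dots,32$; it is the only step in the argument that cannot be delegated to a routine computer verification, and it must be executed once for each spindle and for each group of local cones around the membranes. Alternatively, as a computer-assisted shortcut, one could compute a presentation of $\pi_1(S_{33,192} \setminus (\,12 \cup 23 \cup 13\,))$ from the triangulation and simplify it to the square-knot group $\langle x,y,z,w \mid x^2 = y^3,\ z^2 = w^3,\ xy = zw \rangle$, which would confirm the knot type directly.
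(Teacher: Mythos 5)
Your proposal is correct and follows the same route as the paper, which offers no separate proof for this proposition: the statement is simply a summary of the construction in Section~4, with the face vector and the sphere property left to routine (computer) verification and the knot type read off from the geometric realization of the square knot in $\mathbb{R}^3$ via spindles, membranes, local cones, and the final coning. Two trivial slips only: Part~II contributes $140-27=113$ tetrahedra and Part~III (the cone over $\partial B_{32,140}$) contributes $192-140=52$, not $114$ and $51$; and in your alternative fundamental-group check the amalgamation relation should identify meridians of the two trefoil summands (e.g.\ $y^{-1}x = w^{-1}z$) rather than read $xy=zw$.
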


The $3$-sphere $S_{33,192}$ is not minimal with the property of containing the double trefoil knot
in its $1$-skeleton. One way of obtaining smaller triangulations is by applying bistellar flips, cf. \cite{BjoernerLutz2000},
to the triangulation $S_{33,192}$. If we want to keep the knot while doing local bistellar modifications
on the triangulation, we merely have to exclude the knot edges $1\,2$,  $2\,3$, $1\,3$ as pivot edges
in the bistellar flip program BISTELLAR \cite{Lutz_BISTELLAR}. The smallest triangulation we found this way
is $S_{16,92}$; see Table~\ref{tbl:double_trefoil_sphere} for the list of facets of $S_{16,92}$.

\begin{thm}
The $3$-sphere $S_{16,92}$ has $92$ tetrahedra and $16$ vertices.
It has face vector \linebreak
$f=(16,108,184,92)$ and contains the double trefoil knot on three edges in its $1$-skeleton.
\end{thm}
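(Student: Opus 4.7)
The plan is to verify each claim of the theorem in turn; every one of them reduces to a finite enumeration. First, Table~\ref{tbl:double_trefoil_sphere} lists $92$ tetrahedra over the vertex set $\{1,2,\ldots,16\}$, which directly yields $f_3=92$ and $f_0=16$. To obtain $f_1=108$ and $f_2=184$ one collects the distinct $2$- and $3$-element subsets appearing among the $92$ facets, a routine one-line computation. As a consistency check, the manifold identity $2f_2 = 4 f_3$ already forces $f_2 = 184$, and the Euler relation $f_0 - f_1 + f_2 - f_3 = 0$ for a triangulated $3$-sphere then pins $f_1 = 16 + 184 - 92 = 108$.

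Second, to show that $S_{16,92}$ is a $3$-sphere I would appeal to the construction: $S_{16,92}$ is obtained from $S_{33,192}$ by a sequence of bistellar flips run with BISTELLAR \cite{Lutz_BISTELLAR}, and $S_{33,192}$ is by construction the boundary of the cone on the $3$-ball $B_{32,140}$, hence PL-homeomorphic to $S^3$. Since bistellar flips preserve PL-homeomorphism, $S_{16,92}$ is a PL $3$-sphere as well. As a self-contained alternative one can verify on the computer that each of the $16$ vertex links is a combinatorial $2$-sphere, for instance by reducing each link to $\partial\Delta^3$ via further bistellar flips.

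Third, for the knotted subcomplex: direct inspection of Table~\ref{tbl:double_trefoil_sphere} confirms that the edges $\{1,2\}$, $\{2,3\}$ and $\{1,3\}$ all appear in the $1$-skeleton, so they close up to a three-edge simple loop $K$. In $S_{33,192}$ these same three edges form a square knot (double trefoil) by the preceding construction. BISTELLAR was configured to forbid $\{1,2\}$, $\{2,3\}$ and $\{1,3\}$ as pivot edges, so no flip in the sequence $S_{33,192}\leadsto S_{16,92}$ touches $K$: each flip is supported inside a star disjoint from $K$ and induces a PL-homeomorphism of $S^3$ that is the identity on $K$. Composing these homeomorphisms yields an ambient PL-homeomorphism of $S^3$ carrying $K\subset S_{33,192}$ onto $K\subset S_{16,92}$, and hence the latter is again a double trefoil. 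The only genuine obstacle is the $3$-sphere verification: unlike the face-vector count and the knot-preservation argument, this step is not immediate from the table and requires either trusting the flip history or performing a combinatorial link check at each of the $16$ vertices.
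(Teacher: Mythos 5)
Your proposal matches the paper's own (implicit) justification: the theorem is stated without a formal proof environment, the face vector being a direct computation from Table~\ref{tbl:double_trefoil_sphere} and the sphere/knot properties following from the construction of $S_{33,192}$ together with the fact that BISTELLAR was run with the knot edges $1\,2$, $2\,3$, $1\,3$ excluded as pivot edges. One small imprecision: the flips need not be ``supported inside a star disjoint from $K$'' --- they may well occur in the star of the knot --- but since no flip removes a knot edge (or a knot vertex), the three edges persist and the induced PL-homeomorphisms preserve the knot type, which is the argument the paper intends.
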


If we remove from the $3$-sphere $S_{16,92}$ the facet $1\,9\,14\,15$,
then the resulting $3$-ball is LC, although it contains a double trefoil knot
as a three-edge subcomplex.

\begin{prop} The removal of the tetrahedron 
$1\,9\,14\,15$ from $S_{16,92}$ 
yields a locally constructible $3$-ball $B_{16, 91}$ with $16$ vertices 
and $91$ tetrahedra. 
\end{prop}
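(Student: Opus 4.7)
The plan is to follow the template of Proposition~\ref{prop:B12collapsible}: exhibit, via computer search, an explicit collapsing sequence that reduces $B_{16,91}$ to its boundary minus one triangle, and verify the certificate mechanically.

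First I would dispose of the routine combinatorial part. Removing any $3$-simplex $\Sigma$ from a triangulated $3$-sphere yields a $3$-ball whose boundary is exactly $\partial \Sigma$; hence $B_{16,91}$ is a $3$-ball with $16$ vertices, $91$ tetrahedra, and boundary consisting of the four triangles $1\,9\,14$, $1\,9\,15$, $1\,14\,15$, $9\,14\,15$ of $\partial(1\,9\,14\,15)$ together with their subfaces. By the definition in Section~2.1, $B_{16,91}$ is LC if and only if it collapses onto $\partial B_{16,91}$ minus some boundary triangle.

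Next I would designate a target triangle, say $\sigma := 9\,14\,15$, and feed $B_{16,91}$ to the randomized collapsing routine of \cite{BenedettiLutz2012bpre}, configured to forbid any collapse-pair that would destroy one of the three triangles $1\,9\,14$, $1\,9\,15$, $1\,14\,15$. A successful run produces a sequence of elementary collapses, naturally organized into the three phases (triangle--tetrahedron, edge--triangle, vertex--edge) familiar from Proposition~\ref{prop:B12collapsible}, whose output is exactly the $2$-disc $\partial B_{16,91} \setminus \{\sigma\}$. The certificate can then be displayed and its correctness checked pair-by-pair in finite time, just as for $B_{12,38}$.

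The main obstacle is whether such a sequence exists at all. By Theorems~\ref{thm:A} and~\ref{thm:C}, the double trefoil realized on the three edges $1\,2$, $2\,3$, $1\,3$ prevents $B_{16,91}$ from being shellable, constructible, or vertex-decomposable; in particular, no collapse preserving the entire boundary sphere can succeed, and the knot must be ``unthreaded'' by opening $\partial B_{16,91}$ at some intermediate stage. Fortunately, Theorem~\ref{thm:C} only forbids LC when the number of trefoil summands $t$ is at least the number of knot edges $m$, and here $t = 2 < 3 = m$. Thus LC is a priori consistent with the knot, and the computational task reduces to locating a witness. Since the preceding main theorem already asserts that $S_{16,92}$ itself is LC, the tetrahedron $1\,9\,14\,15$ is the natural starting point, and the randomized search is expected to terminate with moderate effort.
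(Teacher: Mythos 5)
Your approach is exactly the paper's: the proof consists of an explicit, computer-found collapsing sequence taking $B_{16,91}$ down to its boundary minus one triangle (the paper opens the boundary at $1\,9\,14$ and collapses onto the $2$-ball $\{1\,9\,15,\ 1\,14\,15,\ 9\,14\,15\}$), verified pair by pair in three phases. The only thing missing from your write-up is the certificate itself, which is the entire substance of the proof; the surrounding bookkeeping (that deleting a facet of a $3$-sphere gives a $3$-ball with boundary $\partial\Sigma$, and that Theorem~\ref{thm:C} does not obstruct LC since $t=2<3=m$) is correct framing.
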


\begin{proof} Let $D$ be the $2$-ball given by the triangles $1\,9\,15$, $ 1\,14\,15$ 
and $9\,14\,15$. Clearly $D$ is a subcomplex of the boundary of $B_{16,91}$; 
it is in fact equal to $\partial B_{16,91}$ minus the triangle $1\,9\,14$. 
Our goal is to show that $B_{16,91}$ collapses onto $D$. The following is a certificate that this is true:

\vskip2mm
\noindent {\footnotesize First phase (pairs ``triangle'' $\rightarrow$ ``tetrahedron''):
\setlength{\arraycolsep}{0.5em}
\[\!\!\!\!\!
\begin{array}{rl@{\hspace{2mm}}l@{\hspace{2mm}}@{\hspace{4mm}}r@{\hspace{2mm}}l@{\hspace{2mm}}l@{\hspace{4mm}}r@{\hspace{2mm}}l@{\hspace{2mm}}l@{\hspace{4mm}}r@{\hspace{2mm}}l@{\hspace{2mm}}l}
 1\,9\,14   & \rightarrow & 1\,8\,9\,14,    &
 8\,9\,14   & \rightarrow & 2\,8\,9\,14,    &
 1\,8\,9    & \rightarrow & 1\,7\,8\,9,     &
 2\,8\,9    & \rightarrow & 2\,7\,8\,9,     \\

 1\,7\,8    & \rightarrow & 1\,3\,7\,8,     &
 1\,3\,7    & \rightarrow & 1\,3\,7\,11,    &
 3\,7\,8    & \rightarrow & 3\,5\,7\,8,     &
 1\,3\,8    & \rightarrow & 1\,3\,8\,11,    \\

 2\,7\,8    & \rightarrow & 2\,4\,7\,8,     &
 2\,8\,14   & \rightarrow & 2\,8\,12\,14,   &
 1\,7\,11   & \rightarrow & 1\,7\,9\,11,    &
 3\,5\,7    & \rightarrow & 3\,5\,6\,7,     \\

 5\,6\,7    & \rightarrow & 4\,5\,6\,7,     &
 3\,8\,11   & \rightarrow & 3\,5\,8\,11,    &
 2\,4\,8    & \rightarrow & 2\,4\,8\,16,    &
 3\,7\,11   & \rightarrow & 3\,7\,11\,14,   \\

 2\,7\,9    & \rightarrow & 2\,7\,9\,10,    &
 5\,8\,11   & \rightarrow & 5\,8\,11\,13,   &
 1\,8\,11   & \rightarrow & 1\,8\,11\,15,   &
 4\,8\,16   & \rightarrow & 4\,5\,8\,16,    \\

 1\,8\,15   & \rightarrow & 1\,8\,10\,15,   &
 2\,9\,14   & \rightarrow & 2\,6\,9\,14,    &
 8\,11\,15  & \rightarrow & 8\,11\,13\,15,  &
 5\,8\,16   & \rightarrow & 5\,8\,13\,16,   \\

 5\,13\,16  & \rightarrow & 1\,5\,13\,16,   &
 2\,6\,9    & \rightarrow & 2\,6\,9\,16,    &
 1\,9\,11   & \rightarrow & 1\,9\,11\,15,   &
 11\,13\,15 & \rightarrow & 9\,11\,13\,15,  \\

 1\,8\,10   & \rightarrow & 1\,8\,10\,14,   &
 6\,9\,16   & \rightarrow & 3\,6\,9\,16,    &
 2\,4\,7    & \rightarrow & 2\,4\,7\,15,    &
 2\,4\,15   & \rightarrow & 2\,3\,4\,15,    \\

 1\,10\,14  & \rightarrow & 1\,10\,13\,14,  &
 4\,5\,7    & \rightarrow & 4\,5\,7\,8,     &
 2\,9\,16   & \rightarrow & 2\,9\,10\,16,   &
 3\,5\,11   & \rightarrow & 3\,5\,11\,14,   \\

 1\,5\,13   & \rightarrow & 1\,5\,12\,13,   &
 8\,10\,14  & \rightarrow & 8\,10\,12\,14,  &
 4\,5\,16   & \rightarrow & 1\,4\,5\,16,    &
 1\,4\,5    & \rightarrow & 1\,4\,5\,6,     \\

 3\,5\,6    & \rightarrow & 3\,5\,6\,14,    &
 1\,13\,14  & \rightarrow & 1\,13\,14\,16,  &
 1\,5\,12   & \rightarrow & 1\,2\,5\,12,    &
 7\,11\,14  & \rightarrow & 7\,11\,12\,14,  \\

 9\,13\,15  & \rightarrow & 3\,9\,13\,15,   &
 3\,9\,13   & \rightarrow & 3\,9\,12\,13,   &
 1\,5\,6    & \rightarrow & 1\,2\,5\,6,     &
 3\,7\,14   & \rightarrow & 3\,7\,14\,16,   \\

 7\,14\,16  & \rightarrow & 7\,13\,14\,16,  &
 5\,6\,14   & \rightarrow & 2\,5\,6\,14,    &
 2\,3\,4    & \rightarrow & 2\,3\,4\,13,    &
 1\,4\,6    & \rightarrow & 1\,4\,6\,12,    \\

 6\,7\,13   & \rightarrow & 6\,7\,13\,16,   &
 8\,10\,15  & \rightarrow & 8\,10\,12\,15,  &
 4\,6\,7    & \rightarrow & 4\,6\,7\,15,    &
 1\,4\,16   & \rightarrow & 1\,4\,10\,16,   \\

 5\,11\,13  & \rightarrow & 5\,11\,12\,13,  &
 4\,6\,12   & \rightarrow & 4\,6\,12\,15,   &
 2\,10\,16  & \rightarrow & 2\,4\,10\,16,   &
 11\,12\,14 & \rightarrow & 5\,11\,12\,14,  \\

 2\,5\,12   & \rightarrow & 2\,5\,12\,14,   &
 4\,12\,15  & \rightarrow & 3\,4\,12\,15,   &
 3\,12\,13  & \rightarrow & 3\,4\,12\,13,   &
 3\,9\,16   & \rightarrow & 3\,9\,12\,16,   \\

 3\,12\,15  & \rightarrow & 3\,12\,15\,16,  &
 3\,6\,14   & \rightarrow & 3\,6\,9\,14,    &
 10\,12\,15 & \rightarrow & 10\,12\,15\,16, &
 6\,7\,16   & \rightarrow & 3\,6\,7\,16,    \\

 10\,12\,14 & \rightarrow & 7\,10\,12\,14,  &
 1\,10\,16  & \rightarrow & 1\,10\,15\,16,  &
 1\,6\,12   & \rightarrow & 1\,2\,6\,12,    &
 7\,10\,12  & \rightarrow & 7\,9\,10\,12,   \\

 7\,13\,15  & \rightarrow & 2\,7\,13\,15,   &
 2\,8\,16   & \rightarrow & 2\,6\,8\,16,    &
 9\,10\,16  & \rightarrow & 9\,10\,12\,16,  &
 2\,4\,13   & \rightarrow & 2\,4\,10\,13,   \\

 6\,13\,16  & \rightarrow & 6\,8\,13\,16,   &
 7\,9\,12   & \rightarrow & 7\,9\,11\,12,   &
 3\,13\,15  & \rightarrow & 2\,3\,13\,15,   &
 4\,12\,13  & \rightarrow & 1\,4\,12\,13,   \\

 4\,10\,13  & \rightarrow & 1\,4\,10\,13,   &
 6\,8\,15   & \rightarrow & 6\,8\,12\,15,   &
 2\,8\,12   & \rightarrow & 2\,6\,8\,12,    &
 3\,9\,15   & \rightarrow & 3\,9\,14\,15,   \\

 3\,14\,16  & \rightarrow & 3\,14\,15\,16,  &
 1\,14\,16  & \rightarrow & 1\,14\,15\,16.  &
\end{array}
\]

\noindent Second phase (pairs ``edge'' $\rightarrow$ ``triangle''):
\setlength{\arraycolsep}{0.2em}
\[\!\!\!\!\!\!\!
\begin{array}{r@{\hspace{2mm}}l@{\hspace{2mm}}l@{\hspace{4mm}}r@{\hspace{2mm}}l@{\hspace{2mm}}l@{\hspace{4mm}}r@{\hspace{2mm}}l@{\hspace{2mm}}l@{\hspace{4mm}}r@{\hspace{2mm}}l@{\hspace{2mm}}l@{\hspace{4mm}}r@{\hspace{2mm}}l@{\hspace{2mm}}l}
 8\,9   & \rightarrow & 7\,8\,9,    &
 2\,9   & \rightarrow & 2\,9\,10,   &
 1\,6   & \rightarrow & 1\,2\,6,    &
 5\,16  & \rightarrow & 1\,5\,16,   &
 3\,8   & \rightarrow & 3\,5\,8,    \\

 8\,15  & \rightarrow & 8\,12\,15,  &
 1\,3   & \rightarrow &  1\,3\,11,  &
 13\,15 & \rightarrow &  2\,13\,15, &
 1\,5   & \rightarrow & 1\,2\,5,    &
 1\,7   & \rightarrow & 1\,7\,9,    \\

 8\,10  & \rightarrow & 8\,10\,12,  &
 1\,8   & \rightarrow & 1\,8\,14,   &
 8\,11  & \rightarrow & 8\,11\,13,  &
 2\,8   & \rightarrow & 2\,6\,8,    &
 3\,5   & \rightarrow & 3\,5\,14,   \\

 11\,13 & \rightarrow & 11\,12\,13, &
 1\,2   & \rightarrow & 1\,2\,12,   &
 8\,14  & \rightarrow & 8\,12\,14,  &
 5\,7   & \rightarrow & 5\,7\,8,    &
 9\,16  & \rightarrow & 9\,12\,16,  \\

 6\,13  & \rightarrow & 6\,8\,13,   &
 3\,11  & \rightarrow & 3\,11\,14,  &
 8\,12  & \rightarrow & 6\,8\,12,   &
 9\,13  & \rightarrow & 9\,12\,13,  &
 7\,8   & \rightarrow & 4\,7\,8,    \\

 1\,11  & \rightarrow & 1\,11\,15,  &
 11\,14 & \rightarrow & 5\,11\,14,  &
 11\,15 & \rightarrow & 9\,11\,15,  &
 4\,8   & \rightarrow & 4\,5\,8,    &
 4\,5   & \rightarrow & 4\,5\,6,    \\

 5\,11  & \rightarrow & 5\,11\,12,  &
 5\,6   & \rightarrow & 2\,5\,6,    &
 4\,7   & \rightarrow & 4\,7\,15,   &
 5\,8   & \rightarrow & 5\,8\,13,   &
 2\,5   & \rightarrow & 2\,5\,14,   \\

 4\,6   & \rightarrow & 4\,6\,15,   &
 5\,13  & \rightarrow & 5\,12\,13,  &
 5\,12  & \rightarrow & 5\,12\,14,  &
 6\,8   & \rightarrow & 6\,8\,16,   &
 4\,15  & \rightarrow & 3\,4\,15,   \\

 12\,13 & \rightarrow & 1\,12\,13,  &
 8\,13  & \rightarrow & 8\,13\,16,  &
 1\,12  & \rightarrow & 1\,4\,12,   &
 4\,12  & \rightarrow & 3\,4\,12,   &
 3\,4   & \rightarrow & 3\,4\,13,   \\

 3\,13  & \rightarrow & 2\,3\,13,   &
 2\,3   & \rightarrow & 2\,3\,15,   &
 2\,15  & \rightarrow & 2\,7\,15,   &
 4\,13  & \rightarrow & 1\,4\,13,   &
 7\,15  & \rightarrow & 6\,7\,15,   \\

 6\,7   & \rightarrow & 3\,6\,7,    &
 2\,7   & \rightarrow & 2\,7\,13,   &
 6\,15  & \rightarrow & 6\,12\,15,  &
 6\,12  & \rightarrow & 2\,6\,12,   &
 3\,7   & \rightarrow & 3\,7\,16,   \\

 1\,4   & \rightarrow & 1\,4\,10,   &
 12\,15 & \rightarrow & 12\,15\,16, &
 7\,16  & \rightarrow &  7\,13\,16, &
 2\,13  & \rightarrow & 2\,10\,13,  &
 2\,12  & \rightarrow & 2\,12\,14,  \\

 2\,10  & \rightarrow & 2\,4\,10,   &
 2\,14  & \rightarrow & 2\,6\,14,   &
 4\,10  & \rightarrow & 4\,10\,16,  &
 2\,4   & \rightarrow & 2\,4\,16,   &
 12\,14 & \rightarrow & 7\,12\,14,  \\

 2\,6   & \rightarrow & 2\,6\,16,   &
 7\,13  & \rightarrow & 7\,13\,14,  &
 6\,16  & \rightarrow & 3\,6\,16,   &
 7\,14  & \rightarrow & 7\,10\,14,  &
 6\,14  & \rightarrow & 6\,9\,14,   \\

 6\,9   & \rightarrow & 3\,6\,9,    &
 7\,12  & \rightarrow & 7\,11\,12,  &
 7\,10  & \rightarrow & 7\,9\,10,   &
 7\,9   & \rightarrow & 7\,9\,11,   &
 9\,11  & \rightarrow & 9\,11\,12,  \\

 10\,14 & \rightarrow & 10\,13\,14, &
 9\,10  & \rightarrow & 9\,10\,12,  &
 9\,12  & \rightarrow & 3\,9\,12,   &
 10\,12 & \rightarrow & 10\,12\,16, &
 12\,16 & \rightarrow & 3\,12\,16,  \\

 13\,14 & \rightarrow & 13\,14\,16, &
 10\,13 & \rightarrow & 1\,10\,13,  &
 3\,9   & \rightarrow & 3\,9\,14,   &
 14\,16 & \rightarrow & 14\,15\,16, &
 1\,10  & \rightarrow & 1\,10\,15,  \\

 13\,16 & \rightarrow & 1\,13\,16,  &
 10\,16 & \rightarrow & 10\,15\,16, &
 3\,14  & \rightarrow & 3\,14\,15,  &
 3\,15  & \rightarrow & 3\,15\,16,  &
 1\,16  & \rightarrow & 1\,15\,16.
\end{array}
\]

\noindent Third phase (pairs ``vertex'' $\rightarrow$ ``edge''):
\setlength{\arraycolsep}{0.4em}
\[\!\!\!\!\!\!\!\!\!\!\!\!
\begin{array}{r@{\hspace{2mm}}l@{\hspace{2mm}}l@{\hspace{3mm}}r@{\hspace{2mm}}l@{\hspace{2mm}}l@{\hspace{3mm}}r@{\hspace{2mm}}l@{\hspace{2mm}}l@{\hspace{3mm}}r@{\hspace{2mm}}l@{\hspace{2mm}}l@{\hspace{3mm}}r@{\hspace{2mm}}l@{\hspace{2mm}}l@{\hspace{3mm}}r@{\hspace{2mm}}l@{\hspace{2mm}}l@{\hspace{3mm}}r@{\hspace{2mm}}l@{\hspace{2mm}}l}
 13 & \rightarrow & 1\,13,  &
 5  & \rightarrow & 5\,14,  &
 6  & \rightarrow & 3\,6,   &
 10 & \rightarrow & 10\,15, &
 7  & \rightarrow & 7\,11,  &
 11 & \rightarrow & 11\,12, &
 12 & \rightarrow & 3\,12,  \\

 2  & \rightarrow & 2\,16,  &
 3  & \rightarrow & 3\,16,  &
 4  & \rightarrow & 4\,16,  &
 8  & \rightarrow & 8\,16,  &
 16 & \rightarrow & 15\,16. \\
\end{array}
\]
}%
\end{proof}

If we remove from the $3$-sphere $S_{16,92}$ the entire star of the vertex $1$ (one of the three knot vertices),
we obtain a $3$-ball $B_{15,66}$ with many interesting properties.
In the following we will show that 
\begin{compactenum}[(1)]
\item $B_{15,66}$ contains a knotted spanning edge $2\,3$, where the knot is the double trefoil;
\item $B_{15,66}$  is not embeddable in $\mathbb{R}^3$;
\item $B_{15,66}$  is not collapsible, but it admits a discrete Morse function with one critical vertex, one critical edge and one critical triangle.
\end{compactenum}

\begin{prop}
$B_{15,66}$ is not rectilinearly-embeddable in $\mathbb{R}^3$. 
\end{prop}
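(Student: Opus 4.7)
The strategy is to mirror the argument used for $B_{12,38}$, invoking Theorem~\ref{thm:embed}. Since the knot in $B_{15,66}$ is the double trefoil, we have $t = 2$ and the theorem's obstruction kicks in when the knot can be realized with $e \le 2t+3 = 7$ edges. The knot is formed by the interior spanning edge $2\,3$ together with any path on $\partial B_{15,66}$ connecting the vertices $2$ and $3$. So the task reduces to exhibiting such a boundary path of length at most $6$.

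First I would compute the boundary $\partial B_{15,66}$ explicitly. Since $B_{15,66}$ is obtained from $S_{16,92}$ by deleting the open star of vertex $1$, the new boundary triangles are exactly the triangles of the link of $1$ in $S_{16,92}$, which can be read off from Table~\ref{tbl:double_trefoil_sphere} by listing all facets containing the vertex $1$ and stripping $1$ from each. This gives an explicit $2$-sphere triangulation on the $15$ remaining vertices.

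Next I would search this boundary for a short edge-path from $2$ to $3$. Vertices $2$ and $3$ both appear in the link of $1$ (they are endpoints of the edges $1\,2$ and $1\,3$ in $S_{16,92}$), so both lie on $\partial B_{15,66}$ — that is, $2\,3$ is indeed a spanning edge. Scanning the boundary triangles, I would attempt to exhibit an explicit path $2 = v_0, v_1, \ldots, v_k = 3$ with $k \le 6$, where each $v_{i-1}v_i$ is an edge of $\partial B_{15,66}$. Combined with the interior edge $2\,3$, this realizes the double trefoil as a subcomplex with at most $7$ edges.

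Finally, applying Theorem~\ref{thm:embed} with $t = 2$ and $e \le 7 = 2t+3$ concludes that $B_{15,66}$ is not rectilinearly embeddable in $\mathbb{R}^3$. The only real obstacle is the bookkeeping of producing a sufficiently short boundary path; once $\partial B_{15,66}$ is tabulated, this is a finite, easily verifiable check (and the construction of $S_{16,92}$, which inherits the knot from the shielding spindles, makes it natural to expect that $2$ and $3$ are indeed close on the boundary, just as happened for $B_{12,38}$).
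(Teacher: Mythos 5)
Your proposal is correct and follows exactly the paper's argument: tabulate $\partial B_{15,66}$, find a short boundary path from $2$ to $3$, close it with the spanning edge $2\,3$, and invoke Theorem~\ref{thm:embed} with $t=2$. The only piece you leave as ``bookkeeping'' is the explicit path, which the paper supplies: the five boundary edges $2\,5$, $5\,13$, $10\,13$, $8\,10$, $3\,8$ give a hexagonal double trefoil, so $e=6\le 2t+3=7$.
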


\begin{proof}
The boundary of $B_{15,66}$ consists of the following $26$ triangles:
{\small
\[
\begin{array}{l@{\hspace{3.5mm}}l@{\hspace{2mm}}l@{\hspace{2mm}}l@{\hspace{2mm}}l@{\hspace{2mm}}l@{\hspace{2mm}}l@{\hspace{2mm}}l@{\hspace{2mm}}l}
  2\,5\,6,   & 2\,5\,12,  & 2\,6\,12,   & 3\,7\,8,   & 3\,7\,11,   & 3\,8\,11,   & 4\,5\,6,    & 4\,5\,16,    & 4\,6\,12,  \\

  4\,10\,13, & 4\,10\,16, &  4\,12\,13, & 5\,12\,13, & 5\,13\,16,  & 7\,8\,9,    & 7\,9\,11,   & 8\,9\,14,    & 8\,10\,14, \\

  8\,10\,15, & 8\,11\,15, & 9\,11\,15,  & 9\,14\,15, & 10\,13\,14, & 10\,15\,16, & 13\,14\,16, & 14\,15\,16. &
\end{array}
\]
}%
In particular, the five edges $2\,5$, $5\,13$, $10\,13$, $8\,10$ and $3\,8$ 
form a boundary path from the vertex $2$ to the vertex $3$. 
Together with the interior edge $2\,3$, this path closes up to a hexagonal double trefoil knot. 
By Theorem~\ref{thm:embed}, $B_{15,66}$ cannot be rectilinearly embedded in $\mathbb{R}^3$.
\end{proof}

\begin{thm}
$B_{15,66}$ admits a discrete Morse function with three critical faces, all of them  
belonging to the boundary $\partial B_{15,66}$.
\end{thm}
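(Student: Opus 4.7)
The plan is to exhibit an explicit discrete Morse function on $B_{15,66}$, following the three-phase format of the collapse certificates in Proposition~\ref{prop:B12collapsible}. A discrete Morse function with exactly one critical face in each of dimensions $0$, $1$, $2$ (and none in dimension $3$) is the same as a matching on the face poset of $B_{15,66}$ with three unmatched cells. This decomposes into: (Phase~1) a pairing of the $66$ tetrahedra with $66$ triangles, leaving exactly one triangle $t$ unmatched; (Phase~2) a pairing of each remaining triangle with one of its edges, leaving exactly one edge $e$ unmatched; and (Phase~3) a pairing of each remaining edge with one of its vertices, leaving exactly one vertex $v$ unmatched. Three critical faces is the theoretical minimum: since $B_{15,66}$ has Betti numbers $(1,0,0,0)$ and is not collapsible (as follows from the knotting obstruction, compare Theorem~\ref{thm:A}), the Morse inequalities force $\sum_i c_i(f) \geq 3$ for every Morse function $f$.

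The matching itself is found using the randomized heuristic of \cite{BenedettiLutz2012bpre}, which performs random elementary collapses and records the faces that become stuck. Out of many runs, we keep one producing a collapsing sequence with exactly three critical faces, all of the required dimensions and all sitting in $\partial B_{15,66}$. Acyclicity of the resulting Morse matching is then a routine check. The explicit certificate can be displayed in the same tabular format as in Proposition~\ref{prop:B12collapsible} and the preceding collapse of $B_{16,91}$: a list of Phase~1 pairs (``triangle $\rightarrow$ tetrahedron''), then Phase~2 pairs (``edge $\rightarrow$ triangle''), then Phase~3 pairs (``vertex $\rightarrow$ edge''), punctuated by the three unmatched cells $t$, $e$, $v$.

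The main obstacle is the search itself. Because $B_{15,66}$ contains a double trefoil on three edges, the vast majority of random collapse attempts get stuck with many more than three critical cells, and it is a priori unclear that three is attainable at all---the three is strictly larger than for $B_{12,38}$, reflecting the additional complexity of the knot. Experience with analogous knotted balls (such as $B_{12,38}$ treated in Proposition~\ref{prop:B12collapsible}) nonetheless suggests that enough trials will succeed for a complex of this size. The boundary-membership requirement imposes the further constraint that $t$ be chosen from the $26$ boundary triangles listed in the previous proposition, and similarly that $e$ and $v$ lie on $\partial B_{15,66}$. This can be enforced by biasing the heuristic so that interior free faces are consumed first, leaving the boundary for the end of each phase; the verification that the output $t$, $e$, $v$ all lie in $\partial B_{15,66}$ is then immediate by inspection of the tables.
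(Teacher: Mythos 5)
Your proposal follows essentially the same route as the paper: the published proof is precisely a computer-found certificate of this kind, presented as a collapse of $B_{15,66}$ onto a $2$-complex $C$, then of $C$ minus a critical triangle onto a pentagon, then of the pentagon minus a critical edge onto a point --- that is, an acyclic matching with exactly one unmatched cell in each of the dimensions $0$, $1$, $2$, found by the randomized search of \cite{BenedettiLutz2012bpre}. The only substance missing from your write-up is the explicit certificate itself, which of course cannot be produced without running the computation; as a minor point, your minimality aside needs more than the Morse inequalities alone (they only give $\sum_i c_i(f)\ge 1$) --- one must combine non-collapsibility with the parity constraint $\sum_i(-1)^i c_i(f)=\chi=1$ to exclude exactly two critical cells.
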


\begin{proof}
We will show that there is a $2$-dimensional subcomplex $C$ of $B_{15,66}$ such that:
\begin{compactitem}
\item $B_{15,66}$ collapses onto $C$ and
\item $C$ minus the triangle $2\,5\,8$ collapses onto a pentagon.
\end{compactitem}
Here is the right collapsing sequence:

\vskip2mm
\noindent {\footnotesize First phase (pairs ``triangle'' $\rightarrow$ ``tetrahedron''):
\setlength{\arraycolsep}{0.5em}
\[\!\!\!\!\!
\begin{array}{r@{\hspace{2mm}}l@{\hspace{2mm}}l@{\hspace{5mm}}r@{\hspace{2mm}}l@{\hspace{2mm}}l@{\hspace{5mm}}r@{\hspace{2mm}}l@{\hspace{2mm}}l@{\hspace{5mm}}r@{\hspace{2mm}}l@{\hspace{2mm}}l}
 4\,10\,16  & \rightarrow & 2\,4\,10\,16, &
 4\,10\,13  & \rightarrow & 2\,4\,10\,13, &
 9\,14\,15  & \rightarrow & 3\,9\,14\,15, &
 10\,15\,16 & \rightarrow & 10\,12\,15\,16, \\

 8\,11\,15  & \rightarrow & 8\,11\,13\,15, &
 3\,8\,11   & \rightarrow & 3\,5\,8\,11, &
 8\,13\,15  & \rightarrow & 6\,8\,13\,15, &
 13\,14\,16 & \rightarrow & 7\,13\,14\,16, \\

 4\,5\,16   & \rightarrow & 4\,5\,8\,16, &
 6\,8\,15   & \rightarrow & 6\,8\,12\,15, &
 4\,5\,6    & \rightarrow & 4\,5\,6\,7, &
 8\,10\,15  & \rightarrow & 8\,10\,12\,15, \\

 8\,9\,14   & \rightarrow & 2\,8\,9\,14, &
 2\,4\,13   & \rightarrow & 2\,3\,4\,13, &
 14\,15\,16 & \rightarrow & 3\,14\,15\,16, &
 2\,5\,12   & \rightarrow & 2\,5\,12\,14, \\

 4\,8\,16   & \rightarrow & 2\,4\,8\,16, &
 2\,8\,14   & \rightarrow & 2\,8\,12\,14, &
 2\,4\,8    & \rightarrow & 2\,4\,7\,8, &
 8\,10\,12  & \rightarrow & 8\,10\,12\,14, \\

 2\,3\,13   & \rightarrow & 2\,3\,13\,15, &
 3\,7\,11   & \rightarrow & 3\,7\,11\,14, &
 4\,6\,12   & \rightarrow & 4\,6\,12\,15, &
 2\,6\,12   & \rightarrow & 2\,6\,8\,12, \\

 9\,11\,15  & \rightarrow & 9\,11\,13\,15, &
 2\,8\,16   & \rightarrow & 2\,6\,8\,16, &
 4\,12\,15  & \rightarrow & 3\,4\,12\,15, &
 2\,8\,9    & \rightarrow & 2\,7\,8\,9, \\

 3\,14\,16  & \rightarrow &  3\,7\,14\,16, &
 4\,5\,8    & \rightarrow & 4\,5\,7\,8, &
 5\,6\,7    & \rightarrow & 3\,5\,6\,7, &
 3\,5\,6    & \rightarrow & 3\,5\,6\,14, \\

 6\,8\,13   & \rightarrow & 6\,8\,13\,16, &
 3\,13\,15  & \rightarrow & 3\,9\,13\,15, &
 3\,4\,13   & \rightarrow & 3\,4\,12\,13, &
 5\,8\,16   & \rightarrow & 5\,8\,13\,16, \\

 2\,4\,7    & \rightarrow & 2\,4\,7\,15, &
 5\,12\,14  & \rightarrow & 5\,11\,12\,14, &
 3\,5\,7    & \rightarrow & 3\,5\,7\,8, &
 2\,6\,16   & \rightarrow & 2\,6\,9\,16, \\ 

 12\,15\,16 & \rightarrow & 3\,12\,15\,16, &
 2\,4\,15   & \rightarrow & 2\,3\,4\,15, &
 6\,13\,16  & \rightarrow & 6\,7\,13\,16, &
 2\,9\,14   & \rightarrow &  2\,6\,9\,14, \\

 2\,9\,16   & \rightarrow & 2\,9\,10\,16, &
 2\,6\,14   & \rightarrow & 2\,5\,6\,14, &
 5\,12\,13  & \rightarrow & 5\,11\,12\,13, &
 3\,12\,16  & \rightarrow & 3\,9\,12\,16, \\

 7\,13\,14  & \rightarrow & 7\,10\,13\,14, &
 3\,11\,14  & \rightarrow & 3\,5\,11\,14, &
 7\,11\,14  & \rightarrow & 7\,11\,12\,14, &
 5\,11\,13  & \rightarrow & 5\,8\,11\,13, \\

 9\,12\,16  & \rightarrow & 9\,10\,12\,16, &
 7\,10\,13  & \rightarrow & 2\,7\,10\,13, &
 9\,10\,12  & \rightarrow & 7\,9\,10\,12, &
 7\,9\,10   & \rightarrow & 2\,7\,9\,10, \\

 3\,6\,14   & \rightarrow & 3\,6\,9\,14, &
 3\,9\,16   & \rightarrow & 3\,6\,9\,16, &
 3\,6\,7    & \rightarrow & 3\,6\,7\,16, &
 4\,7\,15   & \rightarrow & 4\,6\,7\,15, \\

 3\,12\,13  & \rightarrow & 3\,9\,12\,13, &
 7\,10\,12  & \rightarrow & 7\,10\,12\,14, &
 2\,13\,15  & \rightarrow & 2\,7\,13\,15, &
 7\,9\,12   & \rightarrow & 7\,9\,11\,12, \\

 9\,11\,12  & \rightarrow & 9\,11\,12\,13, &
 6\,7\,15   & \rightarrow & 6\,7\,13\,15. &
\end{array}
\]

\noindent Second phase (pairs ``edge'' $\rightarrow$ ``triangle''):
\setlength{\arraycolsep}{0.2em}
\[\!\!\!\!\!\!\!
\begin{array}{r@{\hspace{2mm}}l@{\hspace{2mm}}l@{\hspace{4mm}}r@{\hspace{2mm}}l@{\hspace{2mm}}l@{\hspace{4mm}}r@{\hspace{2mm}}l@{\hspace{2mm}}l@{\hspace{4mm}}r@{\hspace{2mm}}l@{\hspace{2mm}}l@{\hspace{4mm}}r@{\hspace{2mm}}l@{\hspace{2mm}}l}
 8\,9   & \rightarrow &  7\,8\,9,   &
 14\,16 & \rightarrow & 7\,14\,16,  &
 4\,5   & \rightarrow & 4\,5\,7,    &
 3\,11  & \rightarrow & 3\,5\,11,   &
 10\,15 & \rightarrow & 10\,12\,15, \\

 14\,15 & \rightarrow & 3\,14\,15,  &
 13\,14 & \rightarrow & 10\,13\,14, &
 5\,7   & \rightarrow & 5\,7\,8,    &
 8\,10  & \rightarrow & 8\,10\,14,  &
 8\,14  & \rightarrow & 8\,12\,14,  \\

 4\,10  & \rightarrow & 2\,4\,10,   &
 4\,13  & \rightarrow & 4\,12\,13,  &
 15\,16 & \rightarrow & 3\,15\,16,  &
 5\,16  & \rightarrow & 5\,13\,16,  &
 4\,16  & \rightarrow & 2\,4\,16,   \\

 12\,16 & \rightarrow & 10\,12\,16, &
 4\,8   & \rightarrow & 4\,7\,8,    &
 10\,13 & \rightarrow & 2\,10\,13,  &
 10\,12 & \rightarrow & 10\,12\,14, &
 2\,4   & \rightarrow & 2\,3\,4,    \\

 4\,12  & \rightarrow & 3\,4\,12,   &
 2\,16  & \rightarrow & 2\,10\,16,  &
 2\,3   & \rightarrow & 2\,3\,15,   &
 5\,13  & \rightarrow & 5\,8\,13,   &
 2\,15  & \rightarrow & 2\,7\,15,   \\

 11\,15 & \rightarrow & 11\,13\,15, &
 10\,16 & \rightarrow & 9\,10\,16,  &
 3\,13  & \rightarrow & 3\,9\,13,   &
 9\,16  & \rightarrow & 6\,9\,16,   &
 9\,10  & \rightarrow & 2\,9\,10,   \\

 10\,14 & \rightarrow & 7\,10\,14,  &
 2\,13  & \rightarrow & 2\,7\,13,   &
 7\,15  & \rightarrow & 7\,13\,15,  &
 7\,10  & \rightarrow & 2\,7\,10,   &
 4\,7   & \rightarrow & 4\,6\,7,    \\

 8\,15  & \rightarrow & 8\,12\,15,  &
 4\,6   & \rightarrow & 4\,6\,15,   &
 4\,15  & \rightarrow & 3\,4\,15,   &
 5\,12  & \rightarrow & 5\,11\,12.  &
\end{array}
\]
}%
Let $C$ be the obtained $2$-complex. Note that $C$ contains the triangle $2\,5\,8$, 
which belongs to $\partial B_{15,66}$ and has not been collapsed yet. 
Let $D$ be the complex obtained from $C$ after removing the (interior of the) 
triangle $2\,5\,8$.  Here is a proof:

\vskip2mm
\noindent {\footnotesize First phase (pairs ``edge'' $\rightarrow$ ``triangle''):
\setlength{\arraycolsep}{0.2em}
\[\!\!\!\!\!\!\!
\begin{array}{r@{\hspace{2mm}}l@{\hspace{2mm}}l@{\hspace{4mm}}r@{\hspace{2mm}}l@{\hspace{2mm}}l@{\hspace{4mm}}r@{\hspace{2mm}}l@{\hspace{2mm}}l@{\hspace{4mm}}r@{\hspace{2mm}}l@{\hspace{2mm}}l@{\hspace{4mm}}r@{\hspace{2mm}}l@{\hspace{2mm}}l}
 2\,5   & \rightarrow & 2\,5\,14,   &
 2\,14  & \rightarrow & 2\,12\,14,  &
 5\,6   & \rightarrow & 5\,6\,14,   &
 6\,14  & \rightarrow & 6\,9\,14,   &
 9\,14  & \rightarrow & 3\,9\,14,   \\

 2\,12  & \rightarrow & 2\,8\,12,   &
 8\,12  & \rightarrow & 6\,8\,12,   &
 6\,12  & \rightarrow & 6\,12\,15,  &
 6\,15  & \rightarrow & 6\,13\,15,  &
 12\,15 & \rightarrow & 3\,12\,15,  \\

 3\,15  & \rightarrow & 3\,9\,15,   &
 13\,15 & \rightarrow & 9\,13\,15,  &
 6\,13  & \rightarrow & 6\,7\,13,   &
 3\,12  & \rightarrow & 3\,9\,12,   &
 9\,12  & \rightarrow & 9\,12\,13,  \\

 3\,9   & \rightarrow & 3\,6\,9,    &
 7\,13  & \rightarrow & 7\,13\,16,  &
 3\,6   & \rightarrow & 3\,6\,16,   &
 3\,16  & \rightarrow & 3\,7\,16,   &
 12\,13 & \rightarrow & 11\,12\,13, \\

 6\,7   & \rightarrow & 6\,7\,16,   &
 6\,16  & \rightarrow & 6\,8\,16,   &
 8\,16  & \rightarrow & 8\,13\,16,  &
 6\,8   & \rightarrow & 2\,6\,8,    &
 9\,13  & \rightarrow & 9\,11\,13,  \\

 2\,6   & \rightarrow & 2\,6\,9,    &
 8\,13  & \rightarrow & 8\,11\,13,  &
 2\,8   & \rightarrow & 2\,7\,8,    &
 7\,8   & \rightarrow & 3\,7\,8,    &
 3\,8   & \rightarrow & 3\,5\,8,    \\

 3\,7   & \rightarrow & 3\,7\,14,   &
 3\,5   & \rightarrow & 3\,5\,14,   &
 8\,11  & \rightarrow & 5\,8\,11,   &
 5\,14  & \rightarrow & 5\,11\,14,  &
 2\,9   & \rightarrow & 2\,7\,9,    \\

 9\,11  & \rightarrow & 7\,9\,11,   &
 7\,11  & \rightarrow & 7\,11\,12,  &
 11\,12 & \rightarrow & 11\,12\,14, &
 7\,12  & \rightarrow & 7\,12\,14.  &
\end{array}
\]

\noindent Final phase (pairs ``vertex'' $\rightarrow$ ``edge''):
\setlength{\arraycolsep}{0.4em}
\[\!\!\!\!\!\!\!\!\!\!\!\!
\begin{array}{r@{\hspace{2mm}}l@{\hspace{2mm}}l@{\hspace{3mm}}r@{\hspace{2mm}}l@{\hspace{2mm}}l@{\hspace{3mm}}r@{\hspace{2mm}}l@{\hspace{2mm}}l@{\hspace{3mm}}r@{\hspace{2mm}}l@{\hspace{2mm}}l@{\hspace{3mm}}r@{\hspace{2mm}}l@{\hspace{2mm}}l@{\hspace{3mm}}r@{\hspace{2mm}}l@{\hspace{2mm}}l@{\hspace{3mm}}r@{\hspace{2mm}}l@{\hspace{2mm}}l@{\hspace{3mm}}r@{\hspace{2mm}}l@{\hspace{2mm}}l}
 2  & \rightarrow & 2\,7,   &
 15 & \rightarrow & 9\,15,  &
 3  & \rightarrow & 3\,14,  &
 12 & \rightarrow & 12\,14, &
 6  & \rightarrow & 6\,9,   &
 8  & \rightarrow & 5\,8,   &
 5  & \rightarrow & 5\,11,  &
 9  & \rightarrow & 7\,9.
\end{array}
\]
}%

At this point we are left with the pentagon $P$ given 
by the five edges $7\,14$, $7\,16$, $11\,13$, $11\,14$, and $13\,16$. 
The latter edge, $13\,16$, belongs to the boundary of $B_{15,66}$. Clearly, 
$P$ minus this edge yields a collapsible $1$-ball. Thus, $B_{15,66}$ 
admits a discrete Morse function whose critical faces are the vertex $13$, 
the edge $13\,66$ and the triangle $2\,5\,8$. This discrete Morse function 
is the best possible, since $B_{15,66}$ cannot be collapsible (because of
its knotted spanning edge $2\,3$).
\end{proof}

  \begin{figure}[t]
    \begin{center}
      \small
      \psfrag{1}{1}
      \psfrag{2}{2}
      \psfrag{3}{3}
      \psfrag{4}{4}
      \psfrag{5}{5}
      \psfrag{6}{6}
      \psfrag{7}{7}
      \psfrag{8}{8}
      \psfrag{9}{9}
      \psfrag{10}{10}
      \psfrag{11}{11}
      \psfrag{12}{12}
      \psfrag{13}{13}
      \psfrag{14}{14}
      \psfrag{15}{15}
      \psfrag{16}{16}
      \psfrag{17}{17}
      \psfrag{18}{18}
      \psfrag{19}{19}
      \psfrag{20}{20}
      \psfrag{21}{21}
      \psfrag{22}{22}
      \psfrag{23}{23}
      \psfrag{24}{24}
      \psfrag{25}{25}
      \psfrag{26}{26}
      \psfrag{27}{27}
      \psfrag{28}{28}
      \psfrag{29}{29}
      \psfrag{30}{30}
      \psfrag{31}{31}
      \psfrag{32}{32}
      \psfrag{33}{33}
      \psfrag{34}{34}
      \psfrag{35}{35}
      \psfrag{36}{36}
      \psfrag{37}{37}
      \psfrag{38}{38}
      \psfrag{39}{39}
      \psfrag{40}{40}
      \psfrag{41}{41}
      \psfrag{42}{42}
      \psfrag{43}{43}
      \includegraphics[width=12cm]{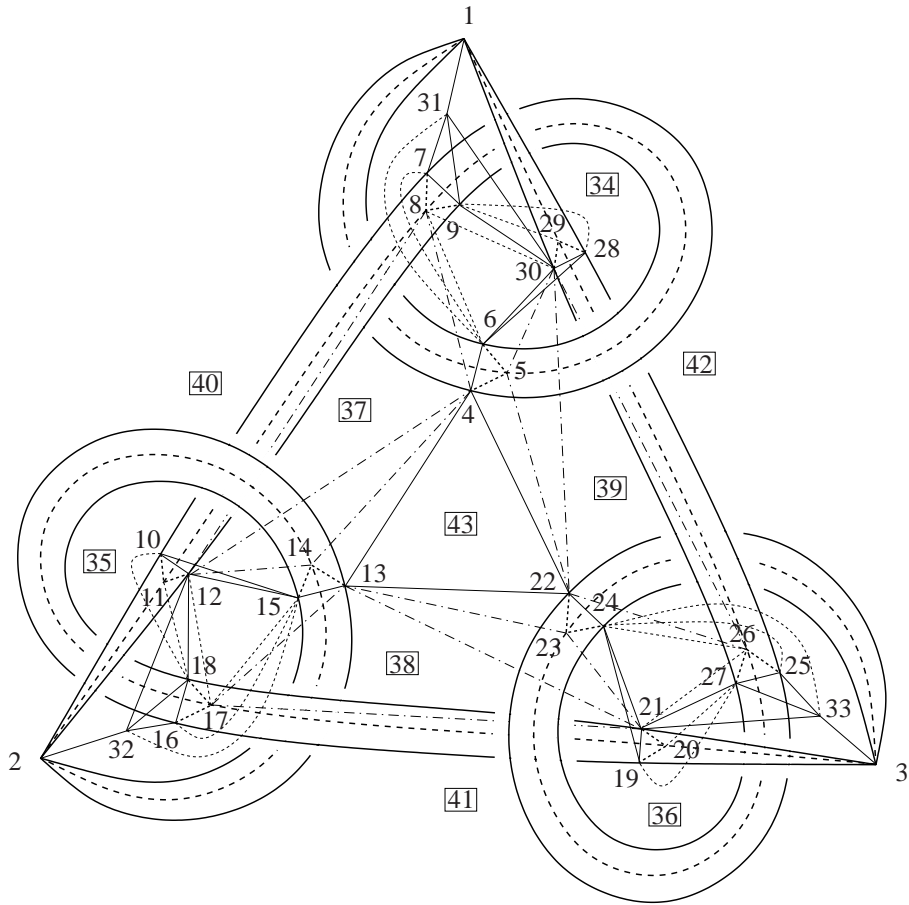}
    \end{center}
    \caption{The triple trefoil in the sphere  $S_{44,284}$.
      \label{fig:triple_trefoil}}
  \end{figure}

  \begin{figure}[htp]
    \begin{center}
      \small
      \psfrag{1}{1}
      \psfrag{2}{2}
      \psfrag{3}{3}
      \psfrag{4}{4}
      \psfrag{5}{5}
      \psfrag{6}{6}
      \psfrag{7}{7}
      \psfrag{8}{8}
      \psfrag{9}{9}
      \psfrag{10}{10}
      \psfrag{11}{11}
      \psfrag{12}{12}
      \psfrag{13}{13}
      \psfrag{14}{14}
      \psfrag{15}{15}
      \psfrag{16}{16}
      \psfrag{17}{17}
      \psfrag{18}{18}
      \psfrag{19}{19}
      \psfrag{20}{20}
      \psfrag{21}{21}
      \psfrag{22}{22}
      \psfrag{23}{23}
      \psfrag{24}{24}
      \psfrag{25}{25}
      \psfrag{26}{26}
      \psfrag{27}{27}
      \psfrag{28}{28}
      \psfrag{29}{29}
      \psfrag{30}{30}      
      \includegraphics[width=11cm]{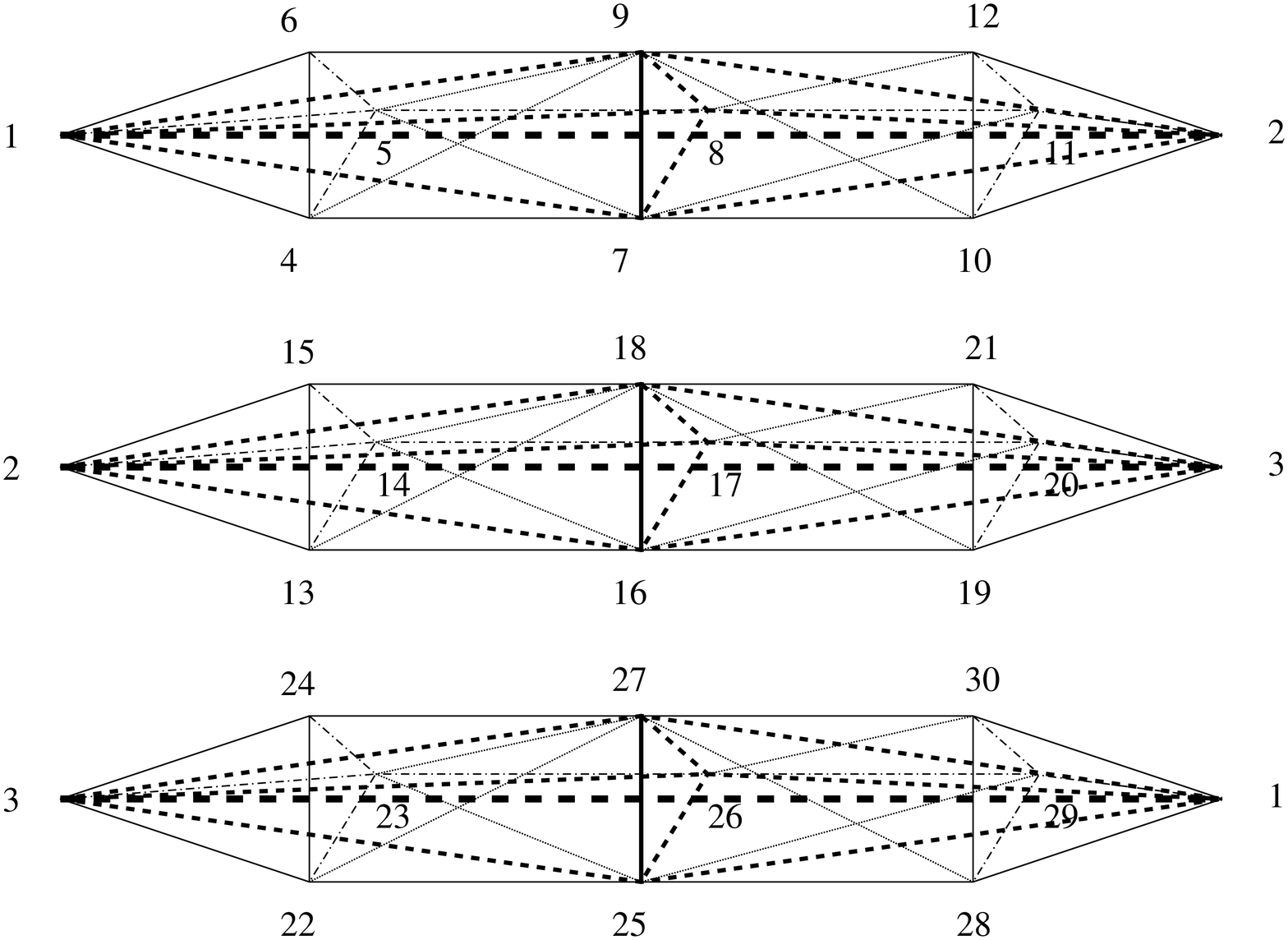}
    \end{center}
    \caption{The spindles of $S_{44,284}$.
      \label{fig:spindles_triple_trefoil}}
  \end{figure}

\section{The triple trefoil} \label{sec:3}

In this section, we are constructing a triangulation $S_{44,284}$ of the $3$-sphere $S^3$
that contains a triple trefoil knot with three edges in its $1$-skeleton. We then use bistellar flips
to obtain a reduced triangulation $S_{18,125}$.

As before for the double trefoil, we place a triple trefoil knot on the three edges $1\,2$, $2\,3$, $1\,3$ in ${\mathbb R}^3$,
as depicted in Figure~\ref{fig:triple_trefoil}.  Each of the three knot edges is protected by a spindle;
see Figure~\ref{fig:spindles_triple_trefoil} for the spindles and Table~\ref{tbl:spindles_triple_trefoil}
for the list of tetrahedra of the spindles.

To close the holes of the knot we glue in the membrane triangles of Table~\ref{tbl:triple_trefoil_membrane_triangles}
and then add the local cones with respect to the vertices $34$, $35$, \dots, $43$ from Table~\ref{tbl:triple_trefoil_local_cones}
to obtain a $3$-ball $B_{43,214}$.

Finally, we add to $B_{43,214}$ the cone over its boundary with respect to the vertex $44$ (as given in Table~\ref{tbl:triple_trefoil_cone})
to obtain the $3$-sphere $S_{44,284}$.

\begin{prop}
The $3$-sphere $S_{44,284}$ consists of $284$ tetrahedra and $44$ vertices.
It has face vector $f=(44,328,568,284)$ and contains the triple trefoil knot on three edges in its $1$-skeleton.
\end{prop}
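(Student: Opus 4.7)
The plan is to treat the proposition as a verification of four pieces of explicit combinatorial data: $f_0$, $f_3$, the rest of the face vector (forced by Dehn--Sommerville), and the knot type. First I would read off $f_0 = 44$ from the vertex labels used in Tables~\ref{tbl:spindles_triple_trefoil}--\ref{tbl:triple_trefoil_cone} (the labels exhaust $\{1,\dots,44\}$), and verify $f_3 = 284$ by summing the tetrahedron counts in the four tables: the three spindles (Part~I), the membrane-closing and local-cone tetrahedra that thicken the mixed complex into the $3$-ball $B_{43,214}$ (Part~II), and the cone over $\partial B_{43,214}$ from the apex~$44$ (Part~III).

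Next, to get the remaining coordinates of the face vector I would invoke the Dehn--Sommerville relations for a closed simplicial $3$-manifold. Since every triangle lies in exactly two tetrahedra, $f_2 = 2 f_3 = 568$, and from $\chi(S^3) = 0$ one gets
\[
 f_1 \;=\; f_0 + f_2 - f_3 \;=\; 44 + 568 - 284 \;=\; 328,
\]
so $f = (44, 328, 568, 284)$ as claimed. This step is trivial once the complex is known to be a simplicial $3$-sphere; to certify that, I would run a pseudomanifold check on the facet list (every triangle in exactly two tetrahedra, every edge-link a circle, every vertex-link a $2$-sphere) and compute homology to confirm $H_*(S_{44,284})\cong H_*(S^3)$. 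For a complex of this size, this is a routine computer check, and it is in fact implicit in the inductive construction: $B_{43,214}$ is a $3$-ball by design (spindles plus membranes is contractible, and the local cones thicken it without creating singularities), and coning off its $2$-sphere boundary with a new vertex yields a $3$-sphere.

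Finally, for the knot-type claim, I would argue that the three edges $1\,2$, $2\,3$, $1\,3$ form a triple trefoil by combining geometry and combinatorics. Geometrically, these edges are placed in $\mathbb{R}^3$ according to Figure~\ref{fig:triple_trefoil}, which is a standard diagram of the connected sum of three trefoils; the spindles of Figure~\ref{fig:spindles_triple_trefoil} are inserted precisely to prevent any unwanted identifications near the knot, so that its ambient isotopy type is preserved under the subsequent thickening and coning. As a cross-check one can compute (Wirtinger-style, from the diagram, or via the fundamental group of the complement of the open star of the $3$-edge cycle) a presentation of the knot group and observe it matches that of the triple trefoil; by Goodrick~\cite{GOO} any such presentation requires at least four generators, ruling out the single and double trefoils. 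The main obstacle I anticipate is not a logical difficulty but the bookkeeping: certifying the pseudomanifold property and identifying the homeomorphism type as $S^3$ from the raw facet lists is tedious by hand, so a computer verification is the natural route, after which the face-vector computation is a single line.
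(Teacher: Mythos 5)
Your proposal is correct and matches the paper's (implicit) argument: the proposition is simply a record of the construction in Section~5 --- spindles, membranes, local cones to form $B_{43,214}$, then coning off the boundary with vertex $44$ --- and its verification is exactly the routine facet count, the Dehn--Sommerville/Euler computation $f_2=2f_3$, $f_1=f_0+f_2-f_3$, and the observation that the edges $1\,2$, $2\,3$, $1\,3$ realize the knot of Figure~\ref{fig:triple_trefoil} by construction. (Only a cosmetic slip: the triple-trefoil tables are labeled Parts A, B, C rather than I, II, III.)
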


Again, the $3$-sphere $S_{44,284}$ is not minimal with the property of containing the triple trefoil knot
in its $1$-skeleton. The smallest triangulation we found via bistellar flips is $S_{18,125}$; 
see Table~\ref{tbl:triple_trefoil_sphere} for the list of facets of $S_{18,125}$.

\begin{table}[htp]
\small\centering
\defaultaddspace=0.6em
\caption{Part A of the sphere $S_{44,284}$: The three spindles.}\label{tbl:spindles_triple_trefoil}
\begin{center}\footnotesize
\begin{tabular*}{\linewidth}{@{\extracolsep{\fill}}lll@{\hspace{9mm}}lll@{\hspace{9mm}}lll@{}}
\toprule
 \addlinespace
$1\,2\,7\,8$  &  $1\,4\,6\,9$  &  $2\,7\,9\,10$     &    $2\,3\,16\,17$  &  $2\,13\,15\,18$  &  $3\,16\,18\,19$    &    $1\,3\,25\,26$  &  $3\,22\,24\,27$  &  $1\,25\,27\,28$  \\
$1\,2\,7\,9$  &  $1\,4\,7\,9$  &  $2\,9\,10\,12$    &    $2\,3\,16\,18$  &  $2\,13\,16\,18$  &  $3\,18\,19\,21$    &    $1\,3\,25\,27$  &  $3\,22\,25\,27$  &  $1\,27\,28\,30$  \\
$1\,2\,8\,9$  &  $1\,4\,5\,7$  &  $2\,7\,8\,11$     &    $2\,3\,17\,18$  &  $2\,13\,14\,16$  &  $3\,16\,17\,20$    &    $1\,3\,26\,27$  &  $3\,22\,23\,25$  &  $1\,25\,26\,29$  \\
              &  $1\,5\,7\,8$  &  $2\,7\,10\,11$    &                    &  $2\,14\,16\,17$  &  $3\,16\,19\,20$    &                    &  $3\,23\,25\,26$  &  $1\,25\,28\,29$  \\
              &  $1\,5\,6\,9$  &  $2\,8\,9\,12$     &                    &  $2\,14\,15\,18$  &  $3\,17\,18\,21$    &                    &  $3\,23\,24\,27$  &  $1\,26\,27\,30$  \\
              &  $1\,5\,8\,9$  &  $2\,8\,11\,12$    &                    &  $2\,14\,17\,18$  &  $3\,17\,20\,21$    &                    &  $3\,23\,26\,27$  &  $1\,26\,29\,30$  \\
 \addlinespace
\bottomrule
\end{tabular*}
\end{center}
\end{table}

\begin{table}[htp]
\small\centering
\defaultaddspace=0.6em
\caption{The triangles of the membranes in the sphere $S_{44,284}$.}\label{tbl:triple_trefoil_membrane_triangles}
\begin{center}\footnotesize
\begin{tabular*}{\linewidth}{@{\extracolsep{\fill}}lll@{\hspace{13mm}}lll@{\hspace{13mm}}lll@{}}
\toprule
 \addlinespace
                   &                &                 &                  &   $4\,13\,22$ \\[2mm]
      $8\,9\,30$   &  $6\,28\,30$   &  $1\,30\,31$    &    $12\,17\,18$  &  $10\,12\,15$  &  $2\,12\,32$     &    $21\,26\,27$  &  $19\,21\,24$  &  $3\,21\,33$   \\
      $6\,8\,30$   &  $6\,9\,28$    &  $9\,30\,31$    &    $12\,15\,17$  &  $10\,15\,18$  &  $12\,18\,32$    &    $21\,24\,26$  &  $19\,24\,27$  &  $21\,27\,33$  \\
      $4\,6\,8$    &  $9\,28\,29$   &  $7\,9\,31$     &    $13\,15\,17$  &  $10\,11\,18$  &  $16\,18\,32$    &    $22\,24\,26$  &  $19\,20\,27$  &  $25\,27\,33$  \\
      $4\,8\,12$   &  $9\,29\,30$   &  $1\,6\,31$     &    $13\,17\,21$  &  $11\,12\,18$  &  $2\,15\,32$     &    $22\,26\,30$  &  $20\,21\,27$  &  $3\,24\,33$   \\
      $4\,12\,14$  &                &  $6\,7\,31$     &    $13\,21\,23$  &                &  $15\,16\,32$    &    $5\,22\,30$   &                &  $24\,25\,33$  \\
      $4\,13\,14$  &                &  $6\,7\,8$      &    $13\,22\,23$  &                &  $15\,16\,17$    &    $4\,5\,22$    &                &  $24\,25\,26$  \\
      $12\,14\,15$ &                &                 &    $21\,23\,24$  &                &                  &    $5\,6\,30$    &                & \\
 \addlinespace
\bottomrule
\end{tabular*}
\end{center}
\end{table}

\begin{table}[htp]
\small\centering
\defaultaddspace=0.6em
\caption{Part B of the sphere $S_{44,284}$: Tetrahedra to thicken Part A to a ball $B_{43,214}$.}\label{tbl:triple_trefoil_local_cones}
\begin{center}\footnotesize
\begin{tabular*}{\linewidth}{@{\extracolsep{\fill}}lll@{\hspace{14mm}}lll@{}}
\toprule
 \addlinespace
   $9\,29\,30\,31$  & $11\,12\,18\,32$ &  $20\,21\,27\,33$   &    $1\,31\,37\,40$  & $2\,32\,38\,41$  & $3\,33\,39\,42$  \\
   $1\,29\,30\,31$  & $2\,11\,12\,32$  &  $3\,20\,21\,33$    &    $7\,31\,37\,40$  & $16\,32\,38\,41$ & $25\,33\,39\,42$ \\
   $7\,9\,31\,34$   & $16\,18\,32\,35$ &  $25\,27\,33\,36$   &    $7\,10\,37\,40$  & $16\,19\,38\,41$ & $25\,28\,39\,42$ \\
   $9\,29\,31\,34$  & $11\,18\,32\,35$ &  $20\,27\,33\,36$   &    $10\,15\,37\,40$ & $19\,24\,38\,41$ & $6\,28\,39\,42$  \\
   $1\,29\,31\,34$  & $2\,11\,32\,35$  &  $3\,20\,33\,36$    &    $14\,15\,37\,40$ & $23\,24\,38\,41$ & $5\,6\,39\,42$   \\
   $1\,28\,29\,34$  & $2\,10\,11\,35$  &  $3\,19\,20\,36$    &    $14\,17\,37\,40$ & $23\,26\,38\,41$ & $5\,8\,39\,42$   \\
   $4\,6\,9\,34$    & $13\,15\,18\,35$ &  $22\,24\,27\,36$   &    $14\,15\,18\,40$ & $23\,24\,27\,41$ & $5\,6\,9\,42$    \\
   $4\,7\,9\,34$    & $13\,16\,18\,35$ &  $22\,25\,27\,36$   &    $14\,17\,18\,40$ & $23\,26\,27\,41$ & $5\,8\,9\,42$    \\
   $9\,28\,29\,34$  & $10\,11\,18\,35$ &  $19\,20\,27\,36$   &    $10\,15\,18\,40$ & $19\,24\,27\,41$ & $6\,9\,28\,42$   \\
   $6\,9\,28\,34$   & $10\,15\,18\,35$ &  $19\,24\,27\,36$   &    $10\,11\,18\,40$ & $19\,20\,27\,41$ & $9\,28\,29\,42$  \\
   $6\,28\,30\,34$  & $10\,12\,15\,35$ &  $19\,21\,24\,36$   &    $11\,12\,18\,40$ & $20\,21\,27\,41$ & $9\,29\,30\,42$  \\
   $1\,28\,30\,34$  & $2\,10\,12\,35$  &  $3\,19\,21\,36$    &    $8\,11\,12\,40$  & $17\,20\,21\,41$ & $26\,29\,30\,42$ \\
                    &                  &                     &    $7\,8\,11\,40$   & $16\,17\,20\,41$ & $25\,26\,29\,42$ \\
   $4\,7\,34\,39$   & $13\,16\,35\,37$ &  $22\,25\,36\,38$   &    $7\,10\,11\,40$  & $16\,19\,20\,41$ & $25\,28\,29\,42$ \\
   $4\,34\,37\,39$  & $13\,35\,37\,38$ &  $22\,36\,38\,39$   &    $6\,7\,8\,40$    & $15\,16\,17\,41$ & $24\,25\,26\,42$ \\
                    &                  &                     &    $6\,7\,31\,40$   & $15\,16\,32\,41$ & $24\,25\,33\,42$ \\
   $10\,12\,14\,15$ & $19\,21\,23\,24$ & $5\,6\,28\,30$      &    $1\,6\,31\,40$   & $2\,15\,32\,41$  & $3\,24\,33\,42$  \\
   $7\,9\,10\,37$   & $16\,18\,19\,38$ & $25\,27\,28\,39$    &    $1\,4\,6\,40$    & $2\,13\,15\,41$  & $3\,22\,24\,42$  \\
   $9\,10\,12\,37$  & $18\,19\,21\,38$ & $27\,28\,30\,39$    &    $4\,6\,8\,40$    & $13\,15\,17\,41$ & $22\,24\,26\,42$  \\
   $10\,12\,14\,37$ & $19\,21\,23\,38$ & $5\,28\,30\,39$     &    $4\,8\,12\,40$   & $13\,17\,21\,41$ & $22\,26\,30\,42$ \\
   $10\,14\,15\,37$ & $19\,23\,24\,38$ & $5\,6\,28\,39$      &                     &                  & \\
   $13\,14\,16\,37$ & $22\,23\,25\,38$ & $4\,5\,7\,39$       &                     & $4\,13\,22\,43$  & \\
   $14\,16\,17\,37$ & $23\,25\,26\,38$ & $5\,7\,8\,39$       &                     & $4\,13\,37\,43$  & \\
   $4\,13\,14\,37$  & $13\,22\,23\,38$ & $4\,5\,22\,39$      &                     & $13\,37\,38\,43$ & \\
   $4\,12\,14\,37$  & $13\,21\,23\,38$ & $5\,22\,30\,39$     &                     & $13\,22\,38\,43$ & \\
   $4\,8\,12\,37$   & $13\,17\,21\,38$ & $22\,26\,30\,39$    &                     & $22\,38\,39\,43$ & \\
   $8\,9\,12\,37$   & $17\,18\,21\,38$ & $26\,27\,30\,39$    &                     & $4\,22\,39\,43$  & \\
   $4\,6\,8\,37$    & $13\,15\,17\,38$ & $22\,24\,26\,39$    &                     & $4\,37\,39\,43$  & \\
   $6\,8\,30\,37$   & $12\,15\,17\,38$ & $21\,24\,26\,39$    &                     &                  & \\
   $8\,9\,30\,37$   & $12\,17\,18\,38$ & $21\,26\,27\,39$    &                     &                  & \\
   $9\,30\,31\,37$  & $12\,18\,32\,38$ & $21\,27\,33\,39$    &                     &                  & \\
   $1\,30\,31\,37$  & $2\,12\,32\,38$  & $3\,21\,33\,39$     &                     &                  & \\
   $7\,9\,31\,37$   & $16\,18\,32\,38$ & $25\,27\,33\,39$    &                     &                  & \\
   $1\,30\,34\,37$  & $2\,12\,35\,38$  & $3\,21\,36\,39$     &                     &                  & \\
   $6\,30\,34\,37$  & $12\,15\,35\,38$ & $21\,24\,36\,39$    &                     &                  & \\
   $4\,6\,34\,37$   & $13\,15\,35\,38$ & $22\,24\,36\,39$    &                     &                  & \\
 \addlinespace
\bottomrule
\end{tabular*}
\end{center}
\end{table}
\begin{table}[htp]
\small\centering
\defaultaddspace=0.6em
\caption{Part C of the sphere $S_{44,284}$: Cone over the boundary of the ball $B_{43,214}$.}\label{tbl:triple_trefoil_cone}
\begin{center}\footnotesize
\begin{tabular*}{\linewidth}{@{\extracolsep{\fill}}lllllll@{}}
\toprule
 \addlinespace
  $1\,4\,5\,44$    & $1\,4\,40\,44$   & $1\,5\,6\,44$    & $1\,6\,31\,44$   & $1\,31\,34\,44$  & $1\,34\,37\,44$  & $1\,37\,40\,44$  \\
  $2\,13\,14\,44$  & $2\,13\,41\,44$  & $2\,14\,15\,44$  & $2\,15\,32\,44$  & $2\,32\,35\,44$  & $2\,35\,38\,44$  & $2\,38\,41\,44$  \\
  $3\,22\,23\,44$  & $3\,22\,42\,44$  & $3\,23\,24\,44$  & $3\,24\,33\,44$  & $3\,33\,36\,44$  & $3\,36\,39\,44$  & $3\,39\,42\,44$  \\
  $4\,5\,22\,44$   & $4\,12\,14\,44$  & $4\,12\,40\,44$  &  $4\,13\,14\,44$ & $4\,13\,22\,44$  & $5\,6\,30\,44$   & $5\,22\,30\,44$  \\
  $6\,7\,8\,44$    & $6\,7\,31\,44$   & $6\,8\,30\,44$   & $7\,8\,39\,44$   & $7\,31\,34\,44$  & $7\,34\,39\,44$  & $8\,9\,30\,44$   \\
  $8\,9\,42\,44$   & $8\,39\,42\,44$  & $9\,30\,42\,44$  & $12\,14\,15\,44$ & $12\,15\,17\,44$ & $12\,17\,18\,44$ & $12\,18\,40\,44$ \\ 
  $13\,21\,23\,44$ & $13\,21\,41\,44$ & $13\,22\,23\,44$ & $15\,16\,17\,44$ & $15\,16\,32\,44$ & $16\,17\,37\,44$ & $16\,32\,35\,44$ \\
  $16\,35\,37\,44$ & $17\,18\,40\,44$ & $17\,37\,40\,44$ & $21\,23\,24\,44$ & $21\,24\,26\,44$ & $21\,26\,27\,44$ & $21\,27\,41\,44$ \\
  $22\,30\,42\,44$ & $24\,25\,26\,44$ & $24\,25\,33\,44$ & $25\,26\,38\,44$ & $25\,33\,36\,44$ & $25\,36\,38\,44$ & $26\,27\,41\,44$ \\ 
  $26\,38\,41\,44$ & $34\,37\,39\,44$ & $35\,37\,38\,44$ & $36\,38\,39\,44$ & $37\,38\,43\,44$ & $37\,39\,43\,44$ & $38\,39\,43\,44$ \\
 \addlinespace
\bottomrule
\end{tabular*}
\end{center}
\end{table}

\begin{table}[htp]
\small\centering
\defaultaddspace=0.6em
\caption{The sphere $S_{18,125}$.}\label{tbl:triple_trefoil_sphere}
\begin{center}\footnotesize
\begin{tabular*}{\linewidth}{@{\extracolsep{\fill}}llllllll@{}}
\toprule
 \addlinespace
  $1\,2\,4\,9$    & $1\,2\,4\,15$   & $1\,2\,9\,15$   & $1\,3\,8\,10$    & $1\,3\,8\,12$    & $1\,3\,10\,12$  & $1\,4\,5\,14$    & $1\,4\,5\,16$   \\
  $1\,4\,9\,14$   & $1\,4\,15\,16$  & $1\,5\,7\,11$   & $1\,5\,7\,14$    & $1\,5\,11\,17$   & $1\,5\,12\,16$  & $1\,5\,12\,17$   & $1\,7\,11\,12$  \\
  $1\,7\,12\,16$  & $1\,7\,14\,16$  & $1\,8\,10\,13$  & $1\,8\,12\,17$   & $1\,8\,13\,18$   & $1\,8\,17\,18$  & $1\,9\,14\,15$   & $1\,10\,12\,13$ \\ 
  $1\,11\,12\,18$ & $1\,11\,17\,18$ & $1\,12\,13\,18$ & $1\,14\,15\,16$  & $2\,3\,5\,13$    & $2\,3\,5\,14$   & $2\,3\,13\,14$   & $2\,4\,6\,15$   \\
  $2\,4\,6\,17$   & $2\,4\,9\,17$   & $2\,5\,10\,14$  & $2\,5\,10\,18$   & $2\,5\,13\,18$   & $2\,6\,11\,12$  & $2\,6\,11\,16$   & $2\,6\,12\,15$  \\
  $2\,6\,16\,17$  & $2\,7\,8\,10$   & $2\,7\,8\,11$   & $2\,7\,10\,18$   & $2\,7\,11\,12$   & $2\,7\,12\,16$  & $2\,7\,16\,18$   & $2\,8\,10\,13$  \\
  $2\,8\,11\,16$  & $2\,8\,13\,18$  & $2\,8\,16\,18$  & $2\,9\,12\,15$   & $2\,9\,12\,16$   & $2\,9\,16\,17$  & $2\,10\,13\,14$  & $3\,4\,8\,12$   \\
  $3\,4\,8\,15$   & $3\,4\,10\,12$  & $3\,4\,10\,16$  & $3\,4\,15\,16$   & $3\,5\,7\,13$    & $3\,5\,7\,14$   & $3\,6\,9\,14$    & $3\,6\,9\,18$   \\
  $3\,6\,11\,16$  & $3\,6\,11\,18$  & $3\,6\,14\,17$  & $3\,6\,16\,17$   & $3\,7\,9\,13$    & $3\,7\,9\,18$   & $3\,7\,14\,18$   & $3\,8\,10\,15$  \\  
  $3\,9\,13\,14$  & $3\,10\,15\,17$ & $3\,10\,16\,17$ & $3\,11\,15\,16$  & $3\,11\,15\,17$  & $3\,11\,17\,18$ & $3\,14\,17\,18$  & $4\,5\,10\,14$  \\
  $4\,5\,10\,16$  & $4\,6\,8\,15$   & $4\,6\,8\,17$   & $4\,8\,12\,17$   &  $4\,9\,13\,14$  & $4\,9\,13\,17$  & $4\,10\,12\,13$  & $4\,10\,13\,14$ \\
  $4\,12\,13\,17$ & $5\,6\,7\,8$    & $5\,6\,7\,13$   & $5\,6\,8\,9$     &  $5\,6\,9\,18$   & $5\,6\,13\,18$  & $5\,7\,8\,11$    & $5\,8\,9\,11$   \\
  $5\,9\,10\,16$  & $5\,9\,10\,18$  & $5\,9\,11\,15$  & $5\,9\,12\,15$   & $5\,9\,12\,16$   & $5\,11\,15\,17$ & $5\,12\,15\,17$  & $6\,7\,8\,15$   \\
  $6\,7\,13\,15$  & $6\,8\,9\,14$   & $6\,8\,14\,17$  & $6\,11\,12\,18$  & $6\,12\,13\,15$  & $6\,12\,13\,18$ & $7\,8\,10\,15$   & $7\,9\,10\,17$  \\
  $7\,9\,10\,18$  & $7\,9\,13\,17$  & $7\,10\,15\,17$ & $7\,13\,15\,17$  & $7\,14\,16\,18$  & $8\,9\,11\,14$  & $8\,11\,14\,16$  & $8\,14\,16\,18$ \\
  $8\,14\,17\,18$ & $9\,10\,16\,17$ & $9\,11\,14\,15$ & $11\,14\,15\,16$ & $12\,13\,15\,17$ &                 &                  & \\
 \addlinespace
\bottomrule
\end{tabular*}
\end{center}
\end{table}

\begin{thm}
The $3$-sphere $S_{18,125}$ consists of $125$ tetrahedra and $18$ vertices.
It has face vector $f=(18,143,250,125)$ and contains the triple trefoil knot on three edges in its $1$-skeleton.
\end{thm}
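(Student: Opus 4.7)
The plan is to verify the four assertions by tracing the construction: $S_{18,125}$ arises from the explicit triangulation $S_{44,284}$ (whose construction, via spindles, membranes, local cones, and a final cone over the boundary, was already laid out) by a sequence of bistellar flips performed under the constraint that the three edges $1\,2$, $2\,3$, $1\,3$ are never used as pivot edges. The facet list of $S_{18,125}$ in Table~\ref{tbl:triple_trefoil_sphere} is the output of this reduction. So the proof is essentially a verification task that each of the four claims indeed follows from this construction plus the properties of bistellar flips.

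First I would confirm the combinatorial data. Since the tetrahedra are listed explicitly in Table~\ref{tbl:triple_trefoil_sphere}, counting them yields $125$ facets, and the labels occurring in the list range over $\{1,\dots,18\}$, giving $18$ vertices. The remaining entries of the $f$-vector follow by counting distinct $2$-faces and $1$-faces of the listed tetrahedra; alternatively, for a simplicial $3$-sphere the Dehn--Sommerville relations determine $f_1$ and $f_2$ from $f_0$ and $f_3$ via $f_2 = 2f_3$ and $f_1 = f_0 + f_3 - 2 + \chi$-type identities, so $f_2 = 250$ and $f_1 = 18 + 125 = 143$ reproduces $(18,143,250,125)$.

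Next I would certify that $S_{18,125}$ is a $3$-sphere. The starting complex $S_{44,284}$ is a $3$-sphere by construction: a $3$-ball $B_{43,214}$ (thickening of the contractible mixed $2$/$3$-complex built from spindles plus membranes by the local cones of Table~\ref{tbl:triple_trefoil_local_cones}) with the cone over its boundary from vertex $44$ appended (Table~\ref{tbl:triple_trefoil_cone}). Bistellar (Pachner) flips preserve the PL-homeomorphism type of the underlying space, hence $S_{18,125}$ is again a PL $3$-sphere. In practice one can also let a computer recheck this directly from the facet list by verifying that every $2$-face lies in exactly two tetrahedra, computing the Euler characteristic (which must be $0$), and confirming that every vertex link is a $2$-sphere; this is a finite, easily-automated check on Table~\ref{tbl:triple_trefoil_sphere}.

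Finally I would verify that the triple trefoil survives the reduction as a three-edge subcomplex. The three edges $1\,2$, $2\,3$, $1\,3$ form the original knot in $S_{44,284}$, and by the flip protocol these edges were explicitly excluded as pivots in the bistellar program (as was done in Section~\ref{sec:2} for the double trefoil). Since a bistellar flip modifies only the closed star of its pivot face, excluded edges are preserved throughout the flip sequence, and in particular they appear in $S_{18,125}$; a direct inspection of Table~\ref{tbl:triple_trefoil_sphere} confirms that $\{1,2\}$, $\{2,3\}$, $\{1,3\}$ still occur as edges. The knot type is a PL-invariant of this triangle in the ambient $3$-sphere, hence it remains the triple trefoil. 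The only step that is not routine is certifying that the flip sequence actually preserves the prescribed edges throughout---i.e.\ that no flip inadvertently removes one of them---but this is exactly what the constraint in BISTELLAR~\cite{Lutz_BISTELLAR} enforces, so the main obstacle is computational rather than conceptual.
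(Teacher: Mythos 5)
Your proposal matches the paper's implicit argument: the theorem records computationally verified facts about the explicit facet list of Table~\ref{tbl:triple_trefoil_sphere}, obtained from $S_{44,284}$ by bistellar flips that exclude the knot edges $1\,2$, $2\,3$, $1\,3$ as pivots, and the paper gives no separate proof beyond this construction and the data in the table. One small slip worth fixing: the formula $f_1 = f_0 + f_3 - 2 + \chi$ as you wrote it would give $141$, not $143$; for a simplicial $3$-sphere, $\chi = 0$ combined with $f_2 = 2f_3$ yields $f_1 = f_0 + f_3 = 18 + 125 = 143$, which is the number you in fact state.
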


Because of the knot,  $S_{18, 125}$ is not LC. So it cannot admit a discrete Morse with fewer than four critical cells.
However, it does admit a discrete Morse function with one critical vertex, one critical edge, one critical triangle and one critical tetrahedron,
as we once more found by a random search.

\begin{thm}
$S_{18, 125}$ admits a discrete Morse function with one critical vertex, one critical edge, one critical triangle and one critical tetrahedron.
\end{thm}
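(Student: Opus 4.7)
The plan is to exhibit a discrete Morse function with the prescribed critical cells explicitly, imitating the strategy already used for $B_{15,66}$ in the previous section. First, I record why four critical cells is optimal: by the Morse inequalities $c_0(f)\ge\beta_0=1$ and $c_3(f)\ge\beta_3=1$; by Theorem \ref{thm:D} applied with $m=t=3$ we get $c_2(f)\ge 1$; and since $\chi(S^3)=0$ forces $c_0-c_1+c_2-c_3=0$, any function with $c_0=c_3=1$ must have $c_1=c_2\ge 1$. So our target count $(c_0,c_1,c_2,c_3)=(1,1,1,1)$ saturates all available lower bounds.

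For the construction I would proceed in three phases, corresponding to the three non-trivial strata of the eventual matching. First, single out a tetrahedron $\Sigma$ of $S_{18,125}$ to play the role of the critical $3$-cell, and produce an explicit sequence of elementary collapses that reduces the $3$-ball $S_{18,125}-\Sigma$ onto some $2$-subcomplex $C$. Second, single out a triangle $T\subset C$ to be the critical $2$-cell, and produce an explicit sequence of elementary collapses of edge-triangle pairs reducing $C-T$ onto a $1$-complex $K$. Third, single out an edge $e\subset K$ to be the critical $1$-cell, and produce an explicit sequence of vertex-edge collapses reducing $K-e$ onto a single vertex $v$, the critical $0$-cell. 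The concatenation of the three matchings together with the four unmatched cells $(v,e,T,\Sigma)$ is a discrete Morse function on $S_{18,125}$ with the desired critical-cell counts; verifying this is a purely combinatorial check that can be carried out once the lists of pairs are written down, exactly as in the proofs of Propositions \ref{prop:B12collapsible} and the preceding theorem on $B_{15,66}$.

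The hard part is not the verification but the search. Since $S_{18,125}$ contains a triple trefoil on three edges, Theorem \ref{thm:A} rules out any collapse of $S_{18,125}-\Sigma$ onto $\partial(S_{18,125}-\Sigma)$ minus a triangle; so the first collapsing phase is forced to destroy boundary triangles, and most greedy strategies will get stuck. As in \cite{BenedettiLutz2012bpre}, I would run a randomized collapse with simulated-annealing-style backtracking, trying many choices of $\Sigma$ and many random admissible orderings of free-face pairs, until a run succeeds in reaching a $2$-complex $C$ small enough to then admit a matching down to a triangle, an edge, and a vertex. Once a successful run is found, the explicit list of matched pairs (partitioned into the three phases above) constitutes the proof.
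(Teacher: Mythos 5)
Your proposal coincides with the paper's treatment: the paper gives no explicit certificate for this theorem either, stating only that the required matching ``was once more found by a random search'' in the sense of \cite{BenedettiLutz2012bpre}, which is exactly the three-phase randomized construction (critical tetrahedron, then triangle, then edge, then vertex, with explicit collapse lists in between) that you describe, together with the same optimality count $(1,1,1,1)$. One pedantic note: the obstruction to collapsing $S_{18,125}-\Sigma$ onto its boundary minus a triangle is Theorem~\ref{thm:C} with $t=m=3$ (the knot is a closed $3$-edge cycle, not a spanning arc), so Theorem~\ref{thm:A} is not quite the right citation, but this remark is only motivational in your argument and does not affect its correctness.
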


\pagebreak

\section{Non-evasiveness and vertex-decomposability}

In this section, we show that all vertex-decomposable balls are non-evasive, while the converse is false already in dimension three. 
For example, we show that Rudin's ball is non-evasive, but it is neither vertex-decomposable nor shellable. 
The following Lemma is well known.

\begin{lemma} \label{lem:SheddingBoundary}
Let $v$ be a shedding vertex of a vertex-decomposable $d$-ball $B$. Then
$v$ lies on the boundary of the ball. In particular,
\begin{compactenum}[\rm (i)]
 \item $\link (v, B)$ is a vertex-decomposable  $(d-1)$-ball;
\item $\del (v, B)$ is a vertex-decomposable $d$-ball.
\end{compactenum}
\end{lemma}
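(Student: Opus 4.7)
The plan is to prove the lemma in two parts: first, establishing the main claim that $v \in \partial B$, from which both (i) and (ii) follow essentially immediately using standard PL-manifold theory together with the shedding vertex definition.

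For the main claim, I would argue by contradiction. Suppose $v$ is an interior vertex of $B$. Then $\link(v, B)$ is a triangulated $(d-1)$-sphere, the closed star $v \ast \link(v, B)$ is a $d$-ball sitting in the interior of $B$, and hence $\del(v, B) = B \setminus \operatorname{int}(v \ast \link(v, B))$ is PL-homeomorphic to $S^{d-1} \times [0,1]$. This forces some reduced homology group $\tilde H_i(\del(v, B); \mathbb Z)$ with $i < d$ to be nontrivial: for $d \ge 2$ one has $H_{d-1}(\del(v, B)) = \mathbb Z$, while for $d = 1$ the deletion $\del(v, B)$ is a disjoint union of two intervals, with $\tilde H_0 = \mathbb Z$.

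On the other hand, by the definition of a shedding vertex, $\del(v, B)$ must be a pure vertex-decomposable $d$-complex. By the Provan--Billera implication from vertex-decomposability to shellability, $\del(v, B)$ is then shellable, hence homotopy equivalent to a wedge of $d$-spheres, and in particular has trivial reduced homology in all degrees below $d$. This contradicts the homology computed above, so $v$ must lie on $\partial B$.

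For (i) and (ii), once $v \in \partial B$ it is a standard fact about simplicial balls that $\link(v, B)$ is a $(d-1)$-ball and that removing the open star of a boundary vertex from a $d$-ball yields a $d$-ball; the latter complex is exactly $\del(v, B)$. Vertex-decomposability of $\link(v, B)$ and of $\del(v, B)$ is given directly by the shedding vertex definition. The main obstacle is the homological step: if one wishes to avoid citing Provan--Billera, the same contradiction can be reached by proving, by induction on the vertex-decomposition recursion and via Mayer--Vietoris applied to $C = \del(v, C) \cup (v \ast \link(v, C))$ with intersection $\link(v, C)$, that any vertex-decomposable pure $d$-complex has reduced homology concentrated in its top degree.
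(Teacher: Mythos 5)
Your proposal is correct and follows essentially the same route as the paper, whose own argument is only a one-line ``proof idea'': an interior shedding vertex would make $\del(v,B)$ a $d$-dimensional complex that is not $(d-1)$-connected (you phrase this via $\tilde H_{d-1}\cong\mathbb Z$), contradicting the fact that a pure vertex-decomposable $d$-complex is shellable and hence has reduced homology only in top degree. Your write-up just fills in the details (including the $d=1$ case and the standard PL facts for (i) and (ii)) that the paper leaves implicit.
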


\begin{proof}[Proof idea:] 
If $v$ is an interior vertex, then the deletion of $v$ is $d$-dimensional but not $(d-1)$-connected and therefore not vertex-decomposable.
\end{proof}

\begin{thm} \label{thm:VDimpliesNE}
Every vertex-decomposable $d$-ball is non-evasive. In particular, all $2$-balls are non-evasive.
\end{thm}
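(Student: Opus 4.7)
The plan is to prove the first statement by double induction: on the dimension $d$, and for fixed $d$, on the number of vertices of $B$. The base case $d=0$ is immediate because a $0$-ball is a single point, hence non-evasive by clause~(2) of the definition of non-evasiveness.

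For the inductive step I consider a vertex-decomposable $d$-ball $B$ with $d \geq 1$. If $B$ is a $d$-simplex, it is already non-evasive. Otherwise, vertex-decomposability supplies a shedding vertex $v$ such that both $\del(v,B)$ and $\link(v,B)$ are vertex-decomposable. Lemma~\ref{lem:SheddingBoundary} then guarantees three facts at once: $v$ lies on $\partial B$; $\link(v,B)$ is a vertex-decomposable $(d-1)$-ball; and $\del(v,B)$ is a vertex-decomposable $d$-ball (with one fewer vertex than $B$). The outer induction hypothesis, applied to $\link(v,B)$ in dimension $d-1$, yields that $\link(v,B)$ is non-evasive; the inner induction hypothesis, applied to $\del(v,B)$ in the same dimension but with fewer vertices, yields that $\del(v,B)$ is non-evasive. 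Hence $v$ witnesses the non-evasiveness of~$B$ via clause~(3) of the definition.

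For the ``in particular'' statement that all $2$-balls are non-evasive, I would invoke the classical fact, going back to Provan--Billera~\cite{PB}, that every simplicial $2$-ball is vertex-decomposable; combining this with the first part of the theorem gives non-evasiveness of every $2$-ball for free. (If one preferred a self-contained argument, a short separate induction would work: any $2$-ball $B$ that is not a triangle has a boundary vertex whose deletion is again a $2$-ball and whose link is a path, so it serves as a shedding vertex.)

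The main point of the argument is absorbed into Lemma~\ref{lem:SheddingBoundary}: without the guarantee that the shedding vertex lies on $\partial B$, the deletion $\del(v,B)$ would in general fail to be a $d$-ball, and neither induction hypothesis would apply. Once one knows that shedding vertices of balls lie on the boundary, and hence that deletions and links remain balls of the appropriate dimensions, the inductive step reduces to a direct unfolding of the two recursive definitions and is essentially automatic.
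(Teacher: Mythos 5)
Your proof is correct and follows essentially the same route as the paper: a double induction on dimension and size, with Lemma~\ref{lem:SheddingBoundary} supplying the boundary shedding vertex whose link and deletion remain balls of the right dimensions (the paper inducts on the number of facets rather than vertices, an immaterial difference). The handling of the ``in particular'' clause via vertex-decomposability of all $2$-balls also matches the paper's intent.
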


\begin{proof}
A zero-dimensional vertex-decomposable ball is just a point, so it is indeed non-evasive. Let $B$ be a vertex-decomposable $d$-ball, with $d > 0$. By Lemma~\ref{lem:SheddingBoundary} there is a boundary vertex $v$ such that $\del(v,B)$ is a vertex-decomposable $d$-ball and $\link(v,B)$ is a vertex-decomposable $(d-1)$-ball. The deletion of $v$ from $B$ has fewer facets than $B$, and the link of $v$ in $B$ has smaller dimension than $B$. By double induction on the dimension and the number of facets, we may assume that both $\del(v,B)$ and $\link(v,B)$ are non-evasive. By definition, then, $B$ is non-evasive.
\end{proof}

\enlargethispage*{3mm}

Next, we prove that the converse of Theorem~\ref{thm:VDimpliesNE} above is false.

\begin{thm} \label{thm:Rudin}
Rudin's ball $R$, which has $14$ vertices and $41$ facets, is non-evasive. 
\end{thm}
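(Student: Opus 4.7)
The plan is to exhibit an explicit certificate of non-evasiveness, by giving a ``dismantling'' order of the $14$ vertices of $R$ together with, at each step, a non-evasive ordering of the link of the vertex to be removed. Specifically, I would search (via a randomized computer algorithm similar to the one used for $B_{12,38}$) for a linear order $v_1, v_2, \ldots, v_{14}$ on the vertex set of $R$ such that the following property holds recursively: letting $R_0 := R$ and $R_i := \del(v_i, R_{i-1})$, each $R_i$ is either a single point ($i=13$) or a non-evasive complex, and each link $\link(v_i, R_{i-1})$ is also non-evasive. Once such an order has been found, the proof reduces to verifying this combinatorial data, which can be done by a straightforward computer check (or, for a human reader, by displaying all the intermediate orderings).

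The key observation that makes this tractable is dimensional induction. The complex $R$ is $3$-dimensional, so each link $\link(v_i, R_{i-1})$ has dimension at most $2$. To certify non-evasiveness of a $2$-complex it suffices in turn to specify, recursively, a vertex to delete (whose link is a $1$-complex) and so on; since links of vertices in a $2$-complex are graphs, the recursion bottoms out at graphs, and a graph is non-evasive if and only if it is dismantlable to a point in the usual graph-theoretic sense (erase a vertex whose closed neighborhood lies inside the closed neighborhood of another vertex, then repeat). Thus, the proof is a nested but finite tree of such dismantling certificates, all verifiable in time polynomial in the size of $R$.

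Concretely, I would first list Rudin's $41$ facets explicitly, then produce a table displaying for each $i \in \{1, \ldots, 13\}$ the vertex $v_i$ being deleted together with a non-evasive ordering of $\link(v_i, R_{i-1})$. Since the deletions $\del(v_i, R_{i-1})$ are themselves handled by continuing the same list, no separate certificate is needed for those; the definition of non-evasiveness chains through automatically once the link certificates are given and the final $R_{13}$ is a single point. The heart of the argument is therefore just to exhibit one such ordering, together with its nested link-orderings, and invoke the definition.

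The main obstacle is purely computational: most random vertex orders will fail, because deleting an interior vertex too early leaves a complex that is no longer a ball and usually not even contractible in a way detectable by further deletions (compare Lemma~\ref{lem:SheddingBoundary}, which for the stronger notion of vertex-decomposability forces the shedding vertex to lie on the boundary). For non-evasiveness the analogous restriction is weaker but still delicate, so the search must be guided by simulated-annealing-style heuristics rather than brute force. However, since $R$ has only $14$ vertices and each link is small, the search space is manageable, and a successful ordering found by the computer yields the theorem immediately.
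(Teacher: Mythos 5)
Your overall strategy is exactly the one the paper uses: exhibit an explicit vertex order (found by computer search), certify non-evasiveness of each link by a nested dismantling, and let the deletions be handled by the continuation of the same list. The paper's certificate is the order $(3,4,5,12,13,1,7,9,14,8,11,10,2,6)$, where after five deletions the residual complex is already $2$-dimensional and is then dismantled directly; this is only a cosmetic difference from your description.

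There is, however, one genuine error in your proposal, located at the base case of the recursion. You assert that a graph (viewed as a $1$-complex) is non-evasive if and only if it is dismantlable in the usual graph-theoretic sense, i.e.\ by repeatedly erasing a vertex whose closed neighborhood is contained in the closed neighborhood of another vertex. That is the cop-win/dismantlability criterion, which characterizes a \emph{different} (and strictly weaker) property. Under the definition of non-evasiveness used here, the link of a vertex $v$ in a graph is a disjoint union of $\deg(v)$ points, and such a $0$-complex is non-evasive only when it is a single point; hence a vertex of a graph can be deleted only if it is a leaf, and a $1$-complex is non-evasive if and only if it is a \emph{tree}. (The boundary of a triangle is dismantlable in your sense but is a circle, hence not even contractible, let alone non-evasive.) If your verification algorithm used the dismantlability test at the graph level, it could falsely certify evasive complexes as non-evasive, so the certificate it produces would not be sound. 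Replacing the bottom-level test by ``is a tree'' — which is precisely what the paper does when it reduces conditions (i) and (ii) for $2$-complexes to statements about trees — repairs the argument, and the rest of your proposal then coincides with the paper's proof.
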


\begin{proof}
Rudin's ball is given by the following $41$ facets \cite{Rudin}:
\setlength{\arraycolsep}{0.3em}
{\small
\[
\begin{array}{l@{\hspace{3.5mm}}l@{\hspace{3.5mm}}l@{\hspace{3.5mm}}l@{\hspace{3.5mm}}l@{\hspace{3.5mm}}l@{\hspace{3.5mm}}l@{\hspace{3.5mm}}l}
  1\,3\,7\,13,    & 1\,3\,9\,13,   & 1\,5\,7\,11,   & 1\,5\,9\,11,    &
  1\,7\,11\,13,   & 1\,9\,11\,13,  & 2\,4\,8\,14,   & 2\,4\,10\,14,   \\

  2\,6\,8\,12,    & 2\,6\,10\,12,  & 2\,8\,12\,14,  & 2\,10\,12\,14,  &
  3\,4\,7\,11,    & 3\,4\,7\,12,   & 3\,6\,10\,11,  & 3\,6\,10\,14,   \\

  3\,7\,12\,13,   & 3\,7\,11\,14,  & 3\,9\,12\,13,  & 3\,10\,11\,14,  &
  4\,5\,8\,12,    & 4\,5\,8\,13,   & 4\,7\,11\,12,  & 4\,8\,11\,12,   \\

  4\,8\,13\,14,   & 4\,10\,13\,14, & 5\,6\,9\,13,   & 5\,6\,9\,14,    &
  5\,7\,11\,14,   & 5\,8\,12\,13,  & 5\,9\,12\,13,  & 5\,9\,11\,14,   \\

  6\,8\,11\,12,   & 6\,9\,13\,14,  & 6\,10\,11\,12, & 6\,10\,13\,14,  &
  7\,11\,12\,13,  & 8\,12\,13\,14, & 9\,11\,13\,14, & 10\,11\,12\,14, \\

  11\,12\,13\,14. & 
\end{array}
\]
}%
To prove non-evasiveness, we claim that the sequence 
$$(a_1, \ldots, a_{14}) = (3, 4, 5, 12, 13, 1, 7, 9, 14, 8, 11, 10, 2, 6)$$ 
has the following two properties:
\begin{compactenum}[(I)]
\item For each $i \le 5$, 
\[
\link_{a_i} \del_{a_1, \ldots, a_{i-1}} R \hbox{ is a non-evasive $2$-complex;} 
\]
\item $\del_{3, 4, 5, 12, 13} R$ is a non-evasive $2$-complex.
\end{compactenum}
To prove that an arbitrary $2$-complex $C$ with $n$ vertices is non-evasive, we need to find an order $a_1, \ldots, a_k$, $a_{k+1}, \ldots, a_n$ of its vertices so that: 
\begin{compactenum}[(i)]
\item For each $i \le k$, 
\[
\link_{a_i} \del_{a_1, \ldots, a_{i-1}} R \hbox{ is a tree;} 
\]
\item $\del_{a_1, \ldots, a_k} R$ is a tree.
\end{compactenum}
All trees and all simplicial $2$-balls are vertex-decomposable and non-evasive, cf.~Theorem~\ref{thm:VDimpliesNE}. In particular, the link of $3$ in $R$ is a non-evasive $2$-ball. Let us delete this vertex $3$, and proceed with the proof of the claim:

\begin{itemize}
\item The link of $4$ in $\del_3 R$ is the $2$-complex $C$ given by the following $8$ facets
{\small
\[
\begin{array}{l@{\hspace{3.5mm}}l@{\hspace{3.5mm}}l@{\hspace{3.5mm}}l@{\hspace{3.5mm}}l@{\hspace{3.5mm}}l@{\hspace{3.5mm}}l@{\hspace{3.5mm}}l@{\hspace{3.5mm}}l}
  2\,8\,14,  & 2\,10\,14,  & 5\,8\,12,  & 7\,11\,12,  & 8\,11\,12, & 
  8\,13\,14, & 10\,13\,14, & 5\,8\,13. & \\
\end{array}
\]
}%
Let us show that $C$ is non-evasive. The link of $7$ in $C$ is a single edge, 
hence non-evasive. The deletion of $7$ from $C$ yields a complex with the same triangles as $C$, 
except $7\,11\,12$. Inside this smaller complex, the link of $8$ is a path, and the deletion of $8$ yields the $2$-complex
{\small
\[
\begin{array}{l@{\hspace{3.5mm}}l@{\hspace{3.5mm}}l@{\hspace{3.5mm}}l@{\hspace{3.5mm}}l@{\hspace{3.5mm}}l@{\hspace{3.5mm}}l@{\hspace{3.5mm}}l@{\hspace{3.5mm}}l}
  2\,10\,14, & 5\,12, & 11\,12, & 13\,14, & 10\,13\,14, & 5\,13. & &\\
\end{array}
\]
}%
This is a $2$-ball with a $3$-edge path attached, hence non-evasive. In particular, $C$ is non-evasive. 

\item The link of $5$ in $\del_{3,4} R$ is the $2$-complex $D$ given by the following $8$ facets
{\small
\[
\begin{array}{l@{\hspace{3.5mm}}l@{\hspace{3.5mm}}l@{\hspace{3.5mm}}l@{\hspace{3.5mm}}l@{\hspace{3.5mm}}l@{\hspace{3.5mm}}l@{\hspace{3.5mm}}l@{\hspace{3.5mm}}l}
1\,7\,11, &  1\,9\,11, & 6\,9\,13,  & 6\,9\,14, & 7\,11\,14, & 
8\,12\,13, & 9\,11\,14 & 9\,12\,13. & \\
\end{array}
\]
}%
We can delete $8$ first (its link is an edge), then $9$ 
(because its link is a $6$-edge path). The resulting $2$-complex,
{\small
\[
\begin{array}{l@{\hspace{3.5mm}}l@{\hspace{3.5mm}}l@{\hspace{3.5mm}}l@{\hspace{3.5mm}}l@{\hspace{3.5mm}}l@{\hspace{3.5mm}}l@{\hspace{3.5mm}}l@{\hspace{3.5mm}}l}
  1\,7\,11, &  6\,13,  & 6\,14, & 7\,11\,14, & 12\,13, & & & & \\
\end{array}
\]
}%
is a $2$-ball with a $3$-edge path attached, hence non-evasive. 
So $D$ is also non-evasive.

\item The link of $12$ in $\del_{3,4,5} R$ is the (non-pure) $2$-complex $E$ 
given by the following $11$ facets
\setlength{\arraycolsep}{0.4em}
{\small
\[  \!\!\!
\begin{array}{l@{\hspace{3.5mm}}l@{\hspace{3.5mm}}l@{\hspace{3.5mm}}l@{\hspace{3.5mm}}l@{\hspace{3.5mm}}l@{\hspace{3.5mm}}l@{\hspace{3.5mm}}l@{\hspace{3.5mm}}l@{\hspace{3.5mm}}l@{\hspace{3.5mm}}l}
  2\,6\,8,   & 2\,6\,10,  & 2\,8\,14,  & 2\,10\,14, & 6\,8\,11,   & 
  6\,10\,11, & 7\,11\,13, & 8\,13\,14, & 9\,13,     & 10\,11\,14, & 11\,13\,14. \\
\end{array}
\]
}%
We can delete $9$ and $7$, as their links are a point and an edge (respectively); 
after that, we delete $13$, whose link is now a path. The resulting $2$-complex $E'$ has $7$ facets:
{\small
\[ 
\begin{array}{l@{\hspace{3.5mm}}l@{\hspace{3.5mm}}l@{\hspace{3.5mm}}l@{\hspace{3.5mm}}l@{\hspace{3.5mm}}l@{\hspace{3.5mm}}l@{\hspace{3.5mm}}l@{\hspace{3.5mm}}l}
  2\,6\,8,   & 2\,6\,10,   & 2\,8\,14, & 2\,10\,14, & 6\,8\,11, & 
  6\,10\,11, & 10\,11\,14. & \\
\end{array}
\]
}%
The link of $14$ inside $E'$ is a $3$-edge path, and the deletion of $14$ from $E'$ 
yields a (non-evasive) $2$-ball. So, $E'$ and $E$ are non-evasive.

\item The link of $13$ in $\del_{3,4,5,12} R$ is the $2$-complex $F$ given by the following 
$6$ facets
{\small
\[
\begin{array}{l@{\hspace{3.5mm}}l@{\hspace{3.5mm}}l@{\hspace{3.5mm}}l@{\hspace{3.5mm}}l@{\hspace{3.5mm}}l@{\hspace{3.5mm}}l}
  1\,7\,11, & 1\,9\,11, & 6\,9\,14, & 6\,10\,14, & 8\,14, & 9\,11\,14. \\
\end{array}
\]
}%
We can delete $8$ first (its link is a point), then $7$ (its link is single edge). 
The resulting $2$-complex is a $2$-ball. In particular, $F$ is non-evasive.

\enlargethispage*{11mm}

\item Finally, let us examine the $2$-complex $G:=\del_{3,4,5,12,13} R$. 
It consists of $13$ facets:
{\small
\[
\begin{array}{l@{\hspace{3.5mm}}l@{\hspace{3.5mm}}l@{\hspace{3.5mm}}l@{\hspace{3.5mm}}l@{\hspace{3.5mm}}l@{\hspace{3.5mm}}l@{\hspace{3.5mm}}l@{\hspace{3.5mm}}l}
  1\,7\,11,  & 1\,9\,11,  & 2\,6\,8,   & 2\,6\,10,   & 2\,10\,14, &   2\,8\,14,  & 6\,8\,11,  & 6\,9\,14,  & 6\,10\,11,  \\
  6\,10\,14, & 7\,11\,14, & 9\,11\,14, & 10\,11\,14. & & & & & 
\end{array}
\]
}%
From $G$ we can delete $1$ (it has a $2$-edge link), then $7$ ($1$-edge link), 
and then $9$ ($2$-edge link). The resulting $2$-complex $H:=\del_{1,7,9} G$ consists of $8$ facets:
{\small
\[
\begin{array}{l@{\hspace{3.5mm}}l@{\hspace{3.5mm}}l@{\hspace{3.5mm}}l@{\hspace{3.5mm}}l@{\hspace{3.5mm}}l@{\hspace{3.5mm}}l@{\hspace{3.5mm}}l@{\hspace{3.5mm}}l}
  2\,6\,8,   & 2\,6\,10,  & 2\,10\,14, & 2\,8\,14, & 6\,8\,11, & 
  6\,10\,11, & 6\,10\,14, & 10\,11,14. & \\
\end{array}
\]
}%
The link of $14$ inside $H$ is a $4$-edge path, and the deletion from $H$ of $14$ 
yields a $2$-ball. So $H$ is non-evasive; therefore $G$ is non-evasive as well.
\end{itemize}
\end{proof}

\begin{corollary}
Some non-evasive balls are (constructible and) not shellable.
\end{corollary}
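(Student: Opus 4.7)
The plan is to invoke the triangulation $B_{9,18}$ of the $3$-ball, listed in Table~\ref{table:nonlin} as simultaneously ``constr., NE, non-sh.'' Its constructibility and non-shellability are already established in \cite{LutzEGnonSH}, so the only piece of the corollary that needs separate justification is non-evasiveness, which fits naturally into the present section (indeed, it was announced in the introduction ``en passant'' alongside Rudin's ball and $B_{7,10}$).

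To certify that $B_{9,18}$ is non-evasive I would mimic the strategy of Theorem~\ref{thm:Rudin}: exhibit an explicit ordering $(a_1,\ldots,a_9)$ of its vertices and an index $k$ such that, for each $i \le k$, the complex $\link(a_i, \del_{a_1,\ldots,a_{i-1}} B_{9,18})$ is a non-evasive $2$-complex, and such that $\del_{a_1,\ldots,a_k} B_{9,18}$ is itself non-evasive. At each stage the intermediate link or deletion is $2$-dimensional, so I can verify non-evasiveness by iteratively peeling off a vertex whose link is a single point, an edge, or a path/tree, eventually reducing to a simplicial $2$-ball (non-evasive by Theorem~\ref{thm:VDimpliesNE}) or to a single simplex. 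A cleaner alternative, staying closer to the preceding theorem, would be to verify directly that Rudin's ball $R$ is constructible by a splitting $R = R_1 \cup R_2$ with $R_1 \cap R_2$ a constructible $2$-ball; the corollary would then follow immediately from Theorem~\ref{thm:Rudin} together with Rudin's original non-shellability result~\cite{Rudin}.

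The main obstacle is purely bookkeeping: producing a valid shedding order and writing out its recursive certificate. Non-evasiveness is recursive both in the deletion and in the link, so the verification could in principle blow up; however, the experience with Rudin's ball in Theorem~\ref{thm:Rudin} shows that a randomized search for a good vertex ordering yields a short certificate whose leaves are all either trees or simplicial $2$-balls within a handful of stages. I would rely on the same randomized computer search to find the ordering, and then exhibit the certificate explicitly, exactly as was done for $R$.
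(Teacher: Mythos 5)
Your argument is correct, but you pick a different witness than the paper intends here. The corollary is placed immediately after Theorem~\ref{thm:Rudin} and is meant to follow at once from it: Rudin's ball $R$ is non-evasive by that theorem, unshellable by Rudin's original result~\cite{Rudin}, and its constructibility is a known fact that the paper takes for granted (hence the parenthetical ``(constructible and)''). You instead propose $B_{9,18}$, which also works, and in fact the paper itself establishes everything you need about it two propositions later (Proposition~\ref{prop:CnonSH}): there the authors give an explicit non-evasiveness certificate by deleting the vertices $1$--$0$--$6$--$3$--$7$--$2$--$4$--$5$--$8$ in order, and they prove constructibility directly via a splitting $B_{9,18}=B_1\cup B_2$ with $B_1\cap B_2$ a $2$-ball, rather than citing~\cite{LutzEGnonSH} (which is the source only for non-shellability). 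So your plan is sound and your proposed certificate-based verification is exactly the method the paper uses for both $R$ and $B_{9,18}$; the only substantive difference is which example carries the corollary, and your fallback route (Rudin's ball plus Theorem~\ref{thm:Rudin}) is precisely the paper's. One small correction: when you delete a vertex from a $3$-ball, the resulting complex $\del(v,\cdot)$ is in general still $3$-dimensional (and possibly non-pure) --- only the vertex links are $2$-dimensional --- so your claim that ``at each stage the intermediate link or deletion is $2$-dimensional'' is inaccurate for the first few deletions; the recursion still goes through because non-evasiveness is defined for complexes of any dimension, and the deletions do eventually drop in dimension, as in the paper's proofs.
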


For a more general statement on non-evasiveness of convex $3$-balls see [XXX].

\begin{prop}\label{prop:SHnonVD}
Let $B_{7,10}$ be the smallest shellable $3$-ball that is not 
vertex-decomposable~\cite{LutzEGnonVD}. This $B_{7,10}$ is non-evasive.
\end{prop}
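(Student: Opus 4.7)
The plan is to mimic the strategy used for Rudin's ball in Theorem~\ref{thm:Rudin}: exhibit an explicit elimination order $a_1, \ldots, a_7$ of the vertices of $B_{7,10}$ such that at each step $i$, the link $\link(a_i, \del_{a_1, \ldots, a_{i-1}} B_{7,10})$ is non-evasive, and the residual complex $\del_{a_1,\ldots,a_7} B_{7,10}$ reduces to a single vertex. With only $7$ vertices and $10$ facets, the search space of orderings is tiny, and exhaustive enumeration (or equivalently, the randomized procedure of \cite{BenedettiLutz2012bpre}) will find such an order quickly.

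The key conceptual point is what distinguishes this argument from vertex-decomposability. The \emph{a priori} obstruction to imitating the vertex-decomposable definition is Lemma~\ref{lem:SheddingBoundary}: in a vertex-decomposable ball every shedding vertex must lie on the boundary, because deletion must remain pure and $(d-1)$-connected. Non-evasiveness imposes no such purity requirement, so after deleting a vertex we are free to operate on a non-pure complex. In particular, the first elimination step may well be the unique interior vertex of $B_{7,10}$: its link is a small triangulated $2$-sphere (on at most $6$ vertices), hence vertex-decomposable and therefore non-evasive by Theorem~\ref{thm:VDimpliesNE}, and its deletion is a non-pure $3$-complex with $6$ vertices on which we can continue. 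This single observation is what sidesteps the obstruction that prevents vertex-decomposability from holding.

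Once the order is written down, verification proceeds by induction on the number of remaining vertices, exactly as in the proof of Theorem~\ref{thm:Rudin}. Each intermediate link is a $1$- or $2$-complex on very few vertices, and a further short elimination order exhibits its non-evasiveness by successively peeling off vertices whose links are trees or single points; each residual $2$-complex encountered is a $2$-ball (non-evasive by Theorem~\ref{thm:VDimpliesNE}) or a tree with a few dangling simplices. The main obstacle is therefore not conceptual but bookkeeping: one must display, for $B_{7,10}$ and for each intermediate complex in the elimination, the explicit list of facets and check non-evasiveness. This is tedious but mechanical, and is exactly the format adopted in the Rudin-ball proof above.
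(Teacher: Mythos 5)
Your overall strategy is the paper's strategy: exhibit an explicit vertex-elimination order and check at each step that the link is non-evasive (the paper uses the ``countdown'' order $6$--$5$--$4$--$3$--$2$--$1$--$0$). But your ``key conceptual point'' rests on a false premise. $B_{7,10}$ has \emph{no} interior vertex: its $f$-vector is $(7,21,25,10)$, so the boundary sphere has $10$ triangles, $15$ edges and hence all $7$ vertices, i.e.\ every vertex of $B_{7,10}$ lies on $\partial B_{7,10}$. In particular there is no vertex whose link is a $2$-sphere, and the first step of your plan --- delete ``the unique interior vertex'' --- cannot be executed. The actual mechanism, which you do gesture at correctly in the abstract but then mislocate, is non-purity of the \emph{deletion}, not interiority of the vertex: the link of the boundary vertex $6$ is a $2$-ball (hence non-evasive), while $\del(6,B_{7,10})$ consists of the three tetrahedra $0\,1\,3\,4$, $0\,2\,3\,5$, $1\,2\,4\,5$ together with the maximal triangle $0\,1\,2$. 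This non-pure deletion is what blocks vertex-decomposability (which requires $\del(v,C)$ to be pure $d$-dimensional) yet is perfectly admissible for non-evasiveness. So the obstruction you identify (Lemma~\ref{lem:SheddingBoundary}) is not the one in play here.

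Beyond that, the proposal defers the entire content of the proof --- the explicit order and the verification of each intermediate link --- to ``bookkeeping.'' For a statement about one specific $7$-vertex complex, that certificate \emph{is} the proof; the paper supplies it in full (delete $6$, then $5$, whose link in the residual complex is two triangles sharing a point, then $4$, whose link is a triangle with a pendant edge, leaving a $2$-ball). As written, your argument would not compile into a verifiable proof, and if one tried to follow your stated first step literally it would fail.
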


\begin{proof}
$B_{7,10}$ is given by the following $10$ tetrahedra:
{\small
\setlength{\arraycolsep}{0.4em}
\[\!\!\!
\begin{array}{l@{\hspace{3.5mm}}l@{\hspace{3.5mm}}l@{\hspace{3.5mm}}l@{\hspace{3.5mm}}l@{\hspace{3.5mm}}l@{\hspace{3.5mm}}l@{\hspace{3.5mm}}l@{\hspace{3.5mm}}l@{\hspace{3.5mm}}l}
  0\,1\,2\,6, & 0\,1\,3\,4, & 0\,1\,3\,6, & 0\,2\,3\,5, & 0\,2\,5\,6, & 0\,3\,5\,6, & 
  1\,2\,4\,5, & 1\,2\,4\,6, & 1\,3\,4\,6, & 2\,4\,5\,6. \\
\end{array}
\]
}%
As explained in ~\cite{LutzEGnonVD}, the deletion of $6$ yields the (non-pure!) $3$-complex $A$ 
given by the facets
{\small
\[
\begin{array}{l@{\hspace{3.5mm}}l@{\hspace{3.5mm}}l@{\hspace{3.5mm}}l@{\hspace{3.5mm}}l}
  0\,1\,2, & 0\,1\,3\,4, & 0\,2\,3\,5, & 1\,2\,4\,5. \\
\end{array}
\]
}%
The link of the vertex $5$ in $A$ consists of two triangles with a point in common; 
this is non-evasive. Deleting $5$ from $A$, we obtain the $3$-complex $B$ with the following facets.
{\small
\[\begin{array}{l@{\hspace{3.5mm}}l@{\hspace{3.5mm}}l@{\hspace{3.5mm}}l@{\hspace{3.5mm}}l}
  0\,1\,2, & 0\,1\,3\,4, & 0\,2\,3, & 1\,2\,4. \\
\end{array}
\]
}%
The link of the vertex $4$ inside $B$ is a triangle with an edge attached, 
hence non-evasive. The deletion of the vertex $4$ from $B$ is a $2$-ball. Therefore, 
$B$ is non-evasive, $A$ is non-evasive, and $B_{7,10}$ is non-evasive as well. 
The sequence of deletions certificating its non-evasiveness is the 
`countdown sequence' $6$--$5$--$4$--$3$--$2$--$1$--$0$.
\end{proof}

\begin{corollary}
Some non-evasive balls are shellable but not vertex-decomposable.
\end{corollary}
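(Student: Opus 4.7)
The statement is an immediate consequence of Proposition~\ref{prop:SHnonVD} combined with the known properties of $B_{7,10}$ from \cite{LutzEGnonVD}, so my plan is simply to exhibit $B_{7,10}$ as the witness and verify the three required properties in turn.

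First, I would recall that $B_{7,10}$ was introduced in \cite{LutzEGnonVD} precisely as the smallest shellable $3$-ball that is not vertex-decomposable; this handles two of the three claims at once, without any further work. Second, I would invoke Proposition~\ref{prop:SHnonVD}, which asserts non-evasiveness of $B_{7,10}$ and was established just above via the countdown deletion sequence $6$--$5$--$4$--$3$--$2$--$1$--$0$. Combining these observations, $B_{7,10}$ is a non-evasive $3$-ball that is shellable but not vertex-decomposable, which is exactly the existence statement asserted by the corollary.

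There is essentially no obstacle here: the corollary is a corollary precisely because the substantive content was already packaged into Proposition~\ref{prop:SHnonVD}. The only thing worth being careful about is to state the chain of implications in a way that makes it transparent that the three properties (non-evasive, shellable, not vertex-decomposable) all hold simultaneously for the \emph{same} complex $B_{7,10}$, rather than for different balls. In particular, one should note that this example also shows the strictness of the inclusion $\{\textrm{vertex-decomposable}\} \subseteq \{\textrm{non-evasive}\}$ from Theorem~\ref{thm:VDimpliesNE}, complementing Theorem~\ref{thm:Rudin} which gave strictness via a ball (Rudin's) that is additionally not shellable.
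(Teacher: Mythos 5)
Your proof is correct and is exactly the paper's (implicit) argument: the corollary follows immediately from Proposition~\ref{prop:SHnonVD}, with $B_{7,10}$ serving as the witness that is shellable and not vertex-decomposable by its construction in \cite{LutzEGnonVD}, and non-evasive by the countdown deletion sequence. Nothing further is needed.
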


\begin{prop} \label{prop:CnonSH}
Let $B_{9,18}$ be the smallest non-shellable $3$-ball, described in~\cite{LutzEGnonSH}. 
$B_{9,18}$ is non-evasive and constructible. 
\end{prop}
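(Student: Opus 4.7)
The plan is to emulate the strategy of Theorem~\ref{thm:Rudin} and Proposition~\ref{prop:SHnonVD}: for non-evasiveness, exhibit an explicit ordering of the nine vertices of $B_{9,18}$ whose progressive deletions and associated links are all non-evasive; for constructibility, exhibit a splitting $B_{9,18}=C_1\cup C_2$ of the ball into two shellable $3$-balls whose intersection $C_1\cap C_2$ is a shellable $2$-ball. Both tasks are combinatorial on a very small complex ($9$ vertices, $18$ tetrahedra), so a computer-assisted search (as in the randomized approach of~\cite{BenedettiLutz2012bpre}) will do the heavy lifting; the role of the proof is then just to supply a certificate that can be verified by hand.

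For \emph{non-evasiveness}, I would first write out the $18$ facets of $B_{9,18}$ from~\cite{LutzEGnonSH}, together with its boundary $2$-sphere. Then I would look for a vertex $v_1$ whose link $\link(v_1, B_{9,18})$ is a $2$-ball (or at least a visibly non-evasive $2$-complex); by Theorem~\ref{thm:VDimpliesNE}, every simplicial $2$-ball is non-evasive, so a boundary vertex whose star is a cone over a disc-link is the natural candidate. Replace $B_{9,18}$ by $\del_{v_1} B_{9,18}$, which drops the star of $v_1$, and iterate: at each stage check that the link of the chosen vertex inside the residual complex is either a $2$-ball, a tree, or (in the lower-dimensional stages) a path/edge/point, exactly as done for Rudin's ball. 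Because $B_{9,18}$ has only $9$ vertices, at most $8$ links have to be checked, and each of them is a complex of dimension $\le 2$ on $\le 8$ vertices; for these one can, if needed, recursively apply the same deletion procedure, much as the complexes $C,D,E,F,G$ were handled in the proof of Theorem~\ref{thm:Rudin}.

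For \emph{constructibility}, I would look for a single $2$-dimensional separating ``wall'' $W\subset B_{9,18}$ that splits it into two $3$-balls $C_1, C_2$ of comparable size. Constructibility is closed under gluings of constructible pieces along a constructible $(d-1)$-subcomplex, so it suffices that $W$ is a shellable $2$-ball and that each $C_i$ is shellable as a $3$-ball. Since $B_{9,18}$ is known to be non-shellable, any such wall $W$ will necessarily separate the obstruction to shellability into the two halves $C_1,C_2$, so some experimentation is needed; however, the obstruction in $B_{9,18}$ is local (the complex is already quite close to shellable), and a natural candidate is the star of the interior vertex (if any) or the link of a carefully chosen edge, producing two obviously shellable halves.

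The main obstacle is purely the \emph{search} for the right vertex ordering and the right wall; there is no conceptual subtlety, because Theorem~\ref{thm:VDimpliesNE} and the standard gluing lemma for constructibility do all the abstract work. Once the sequence and the splitting are in hand, the verification collapses to checking shellability of very small complexes and reading off links, which is entirely mechanical and can be compressed into a short certificate in the style already used for $B_{7,10}$ and Rudin's ball.
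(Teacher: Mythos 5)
Your plan coincides with the paper's actual proof: non-evasiveness is certified there by the deletion order $1$--$0$--$6$--$3$--$7$--$2$--$4$--$5$--$8$ (the link of $1$ is a $2$-ball, and each subsequent link is a $2$-ball, a path, or two triangles joined along a vertex or path), and constructibility by writing $B_{9,18}=B_1\cup B_2$ with $B_1$ the closed star of vertex $1$ together with the tetrahedron $0\,6\,7\,8$, $B_2$ a six-tetrahedron shellable ball, and $B_1\cap B_2$ a $2$-ball with five triangles. The one caveat is that for a statement of this kind the explicit certificate \emph{is} the proof, and your write-up defers it entirely to a search; that search does succeed, and your minor guesses (halves of comparable size, wall given by the star of an interior vertex) are off only in inessential details, since vertex $1$ is a boundary vertex and the two pieces are quite unequal in size.
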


\begin{proof}
$B_{9,18}$ is given by the following $18$ tetrahedra:

{\small
\[
\begin{array}{l@{\hspace{3.5mm}}l@{\hspace{3.5mm}}l@{\hspace{3.5mm}}l@{\hspace{3.5mm}}l@{\hspace{3.5mm}}l@{\hspace{3.5mm}}l@{\hspace{3.5mm}}l@{\hspace{3.5mm}}l}
  0\,1\,2\,3, & 0\,1\,2\,4, & 0\,1\,4\,5, & 0\,1\,5\,7, & 0\,1\,6\,8, & 
  0\,1\,7\,8, & 0\,2\,3\,4, & 0\,6\,7\,8, & 1\,2\,3\,6, \\

  1\,2\,4\,5, & 1\,2\,5\,8, & 1\,2\,6\,8, & 1\,5\,7\,8, & 2\,3\,4\,7, & 
  2\,3\,6\,7, & 2\,4\,6\,7, & 2\,4\,6\,8, & 4\,6\,7\,8. 
\end{array}
\]
}%

Consider the $2$-sphere $S$ given by the following $12$ triangles:
{\small
\setlength{\arraycolsep}{0.3em}
\[
\begin{array}{l@{\hspace{3.5mm}}l@{\hspace{3.5mm}}l@{\hspace{3.5mm}}l@{\hspace{3.5mm}}l@{\hspace{3.5mm}}l@{\hspace{3.5mm}}l@{\hspace{3.5mm}}l@{\hspace{3.5mm}}l@{\hspace{3.5mm}}l@{\hspace{3.5mm}}l@{\hspace{3.5mm}}l}
  0\,2\,3, & 0\,2\,4, & 0\,3\,6, & 0\,4\,5, & 0\,5\,7, & 0\,6\,8, & 
  0\,7\,8, & 2\,3\,6, & 2\,4\,5, & 2\,5\,8, & 2\,6\,8, & 5\,7\,8. 
\end{array}
\]
}%
It is easy to see that $S$ minus the triangle $0\,3\,6$ is the same $2$-complex 
as the link of $1$ inside $B_{9,18}$. Since a $2$-sphere minus a triangle yields a $2$-ball, 
and all $2$-balls are shellable, it follows that the link of $1$ inside $B_{9,18}$ 
is shellable. Since shellability is preserved by taking cones, the closed star $C_1$ of $1$ 
inside $B_{9,18}$ is also shellable. Let $B_1:=C_1 \cup 0\,6\,7\,8$. Since $C_1 \cap 0\,6\,7\,8$ 
consists of the two triangles $0\,6\,8$ and $0\,7\,8$, $B_1$ is also shellable. 
(A shelling order for $B_1$ is the shelling order for $C_1$, plus $0\,6\,7\,8$ as last facet.) 
Now, let $B_2$ be the shellable $3$-ball with $7$ vertices (labeled by $0,2,3,4,6,7,8$) 
with the following $6$ facets, already given in a possible shelling order:
{\small
\[
\begin{array}{l@{\hspace{3.5mm}}l@{\hspace{3.5mm}}l@{\hspace{3.5mm}}l@{\hspace{3.5mm}}l@{\hspace{3.5mm}}l}
  0\,2\,3\,4, & 2\,3\,4\,7, & 2\,3\,6\,7, & 2\,4\,6\,7, & 
  2\,4\,6\,8, & 4\,6\,7\,8. 
\end{array}
\]
}%
Clearly, $B_{9,18}$ splits as $B_1 \cup B_2$. Moreover, the intersection $B_1 \cap B_2$ 
is a $2$-ball, given by the following $5$ facets:
{\small
\[
\begin{array}{l@{\hspace{3.5mm}}l@{\hspace{3.5mm}}l@{\hspace{3.5mm}}l@{\hspace{3.5mm}}l}
  0\,2\,3,  & 0\,2\,4, & 2\,3\,6, & 2\,6\,8, & 6\,7\,8. 
\end{array}
\]
}%
In particular, $B_{9,18}$ is constructible. We still have 
to prove that $B$ is non-evasive; we will show this by deleting the vertices 
$1$--$0$--$6$--$3$--$7$--$2$--$4$--$5$--$8$, in this order. 
The link of vertex $1$ in $B_{9,18}$ is the (non-evasive, shellable) 
$2$-ball descrived above. 
The deletion of $1$ from $B_{9,18}$ yields the following $3$-complex $A$:
{\small
\[
\begin{array}{l@{\hspace{3.5mm}}l@{\hspace{3.5mm}}l@{\hspace{3.5mm}}l@{\hspace{3.5mm}}l@{\hspace{3.5mm}}l@{\hspace{3.5mm}}l@{\hspace{3.5mm}}l@{\hspace{3.5mm}}l@{\hspace{3.5mm}}l@{\hspace{3.5mm}}l@{\hspace{3.5mm}}l}
  0\,2\,3\,4, & 0\,6\,7\,8, & 2\,3\,4\,7, & 2\,3\,6\,7, & 2\,4\,6\,7, & 
  2\,4\,6\,8, & 4\,6\,7\,8, & 0\,4\,5,    & 0\,5\,7,    & 2\,4\,5,    & 
  2\,5\,8,    & 5\,7\,8. 
\end{array}
\]
}%
Inside $A$, the link of the vertex $0$ consist of two triangles joined by a $2$-edge path. Such a $2$-complex is clearly non-evasive. Deleting the vertex $0$ from $A$\, we obtain the $3$-complex $B$\, described as follows:
{\small
\[
\begin{array}{l@{\hspace{3.5mm}}l@{\hspace{3.5mm}}l@{\hspace{3.5mm}}l@{\hspace{3.5mm}}l@{\hspace{3.5mm}}l@{\hspace{3.5mm}}l@{\hspace{3.5mm}}l}
  2\,3\,4\,7, & 2\,3\,6\,7, & 2\,4\,6\,7, & 2\,4\,6\,8, & 
  4\,6\,7\,8, & 2\,4\,5,    & 2\,5\,8,    & 5\,7\,8.
\end{array}
\]
}%
Next, we delete $6$, whose link inside $B$ is a $2$-ball with $4$ triangles. The result is this $3$-complex~$C$:
{\small
\[
\begin{array}{l@{\hspace{3.5mm}}l@{\hspace{3.5mm}}l@{\hspace{3.5mm}}l@{\hspace{3.5mm}}ll}
  2\,3\,4\,7, & 2\,4\,5, & 2\,4\,8, & 2\,5\,8, & 4\,7\,8, & 5\,7\,8.
\end{array}
\]
}%
From $C$ we can delete first $3$ (whose link is a triangle) and then $7$ 
(whose link is a $3$-edge path). The result is a $2$-ball, so $C$ is non-evasive. 
As a consequence, $B$, $A$ and $B_{9,18}$ are all non-evasive. 
\end{proof}

Our last result highlights the positive effects of barycentric subdivisions.

\begin{prop} \label{prop:subdivisions}
Let $B$ be a simplicial complex. 
\begin{compactenum}[\em (i)]
\item Although $B_{9,18}$ is not shellable, its barycentric subdivision is vertex-decomposable. 
\item Although $S_{13, 56}$ is not constructible, its barycentric subdivision is vertex-decomposable.
\item Although $B_{12,38}$ is evasive and not LC, its barycentric subdivision is LC and non-evasive. 
\end{compactenum}
\end{prop}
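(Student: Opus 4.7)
The plan is to derive all three items from structural lemmas on how barycentric subdivision upgrades combinatorial properties, then feed them the properties of $B_{9,18}$, $B_{12,38}$ and $S_{13,56}$ already established in the previous sections. The central inputs will be: ``constructible implies vertex-decomposable after subdivision'' (a strengthening of Billera--Provan), Welker's result ``collapsible implies non-evasive after subdivision'', and a direct lifting of the explicit collapse from Section~\ref{sec:1}.

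For (i), I would invoke the strengthening: \emph{the barycentric subdivision of any constructible complex is vertex-decomposable}. This follows by induction on the constructibility tree $C=C_1\cup C_2$, with $C_1\cap C_2$ constructible of one lower dimension. Since $\sd$ commutes with unions and intersections, one picks the barycenter $\hat\tau$ of a facet $\tau$ of $C_1\cap C_2$ as a shedding vertex: its link in $\sd(C)$ is assembled from $\sd(\partial\tau)$ and the two barycenters of the facets of $C_1$, $C_2$ containing $\tau$, and is vertex-decomposable by induction; the deletion $\del(\hat\tau,\sd C)$ yields a strictly smaller complex to which the same argument applies. Applied to $B_{9,18}$, which is constructible by Proposition~\ref{prop:CnonSH}, this yields (i).

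For (iii) I split into two sub-claims. Non-evasiveness of $\sd(B_{12,38})$ is Welker's theorem applied to $B_{12,38}$, which is collapsible by Proposition~\ref{prop:B12collapsible}. The LC claim is subtler: for a $3$-ball, LC is equivalent to collapsing onto the boundary minus one triangle~\cite{BZ}, and mere collapsibility does not automatically lift to this stronger property. My plan is to refine the explicit collapsing sequence of Proposition~\ref{prop:B12collapsible}. Each elementary collapse $(\sigma,\Sigma)$ there is replaced by a block of elementary collapses on the subdivided tetrahedron $\sd(\Sigma)$, arranged so that (a) all interior faces of $\sd(\Sigma)$ disappear and (b) the subdivided pieces of boundary triangles actually consumed by the original collapse are removed as well. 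A final bookkeeping step preserves exactly one subdivided boundary triangle. The hard part will be this synchronization with the boundary: we must ensure that exactly one subdivided triangle of $\sd(\partial B_{12,38})$ survives at the very end.

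For (ii) I would exploit the decomposition $S_{13,56}=B_{12,38}\cup(v*\partial B_{12,38})$ and combine (iii) with Lemma~\ref{lem:SheddingBoundary}. The subdivided cone $\sd(v*\partial B_{12,38})$ is vertex-decomposable (the cone is shellable, so Billera--Provan applies), and the intersection $\sd(\partial B_{12,38})$ is a subdivided $2$-sphere, hence vertex-decomposable. I would vertex-decompose $\sd(S_{13,56})$ by first shedding the interior vertices of the cone (each has a VD link inside the cone and its deletion only simplifies the cone piece), then crossing the interface into $\sd(B_{12,38})$, where the LC collapse from (iii) guides the remaining shedding. The main obstacle is that (iii) only gives LC of $\sd(B_{12,38})$, not its vertex-decomposability; to close this gap I would arrange for the ``surviving triangle'' in (iii) to lie on the interface triangle through which the cone attaches, and then apply Lemma~\ref{lem:SheddingBoundary} inductively to shed the boundary vertices of $\sd(B_{12,38})$ in the order dictated by the lifted collapse.
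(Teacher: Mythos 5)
Your proposal is substantially different from the paper's proof, and two of the three items have genuine gaps.

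The paper's actual proof is short: for (i) and (ii) the vertex-decomposition sequences of $\sd B_{9,18}$ and $\sd S_{13,56}$ were simply found by computer backtrack search (no structural argument is given or attempted); for (iii) the paper cites Welker's theorem (collapsible $\Rightarrow$ subdivision non-evasive) and a theorem of Benedetti \cite{Benedetti-DMT4MWB} (collapsible $3$-ball $\Rightarrow$ subdivision is LC). You use Welker correctly. But for the LC claim you propose to re-derive \cite{Benedetti-DMT4MWB} by hand, ``lifting'' the explicit collapse of Proposition~\ref{prop:B12collapsible} and solving the boundary-synchronization problem yourself. That synchronization is precisely the non-trivial content of the cited theorem; your plan as written is not a proof but a restatement of what must be proved. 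Cite the theorem instead.

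For (i) you invoke a lemma ``the barycentric subdivision of any constructible complex is vertex-decomposable.'' This is \emph{not} in the paper, is not the Billera--Provan theorem (which concerns shellable complexes), and I do not believe it is an established result. Your induction sketch does not close: after deleting the barycenter $\hat\tau$ from $\sd C$, the complex $\del(\hat\tau,\sd C)$ is \emph{not} a barycentric subdivision of anything, so you cannot feed it back into the inductive hypothesis ``subdivisions of smaller constructible complexes are VD.'' You would need an entirely separate argument for the deletion, and none is supplied. Similarly, the claimed link $\link(\hat\tau,\sd C)\cong\sd(\partial\tau)\ast\sd(\link(\tau,C))$ involves $\link(\tau,C)$, which for $\tau$ a facet of $C_1\cap C_2$ is essentially the disjoint union of $\link(\tau,C_1)$ and $\link(\tau,C_2)$ — this is $(d-1)$-dimensional and disconnected in general, so vertex-decomposability is not automatic and the dimension reduction needed for induction is unclear.

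For (ii) you explicitly flag the gap — LC of $\sd B_{12,38}$ does not give VD — and propose to ``shed the boundary vertices of $\sd(B_{12,38})$ in the order dictated by the lifted collapse.'' This does not work: an elementary collapse removes a free pair $(\sigma,\Sigma)$, whereas shedding removes a vertex together with its entire closed star and requires the \emph{deletion} and the \emph{link} to both remain vertex-decomposable at every step. There is no translation from a collapsing order to a shedding order in general, and $S_{13,56}$ being a sphere means Lemma~\ref{lem:SheddingBoundary} (which is about balls) cannot even be applied to $\sd S_{13,56}$ directly. The paper's claim that $\sd S_{13,56}$ is VD is a non-obvious computational fact; there is no known theorem from which it follows, since $S_{13,56}$ is not constructible (the very point of part (ii)).

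In short: (iii) is right in substance but should cite the existing theorem rather than re-derive it; (i) rests on an unproven and shakily-sketched lemma; (ii) has an acknowledged but unresolved gap. The paper sidesteps all of this by verifying (i) and (ii) computationally.
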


\begin{proof} Sequences of deletions that prove vertex-decomposability of $\sd B_{9,18}$ and  $S_{13, 56}$
                        were found with a computer backtrack search.
Since $B_{12,38}$ is collapsible, by a result of Welker $\sd B_{12, 38}$ is non-evasive \cite{Welker}.  Since $B_{12,38}$ is a collapsible $3$-ball, by a result of the first author $\sd B_{12, 38}$ is locally constructible \cite{Benedetti-DMT4MWB}.
\end{proof}

\begin{corollary}
Some non-evasive balls are (LC and) not constructible.
\end{corollary}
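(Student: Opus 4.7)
The plan is to use the ball $\sd B_{12,38}$ from Proposition~\ref{prop:subdivisions}(iii) as the witness. That proposition already establishes non-evasiveness and the LC property, so it remains only to prove non-constructibility, which I intend to extract from Theorem~\ref{thm:A}.

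The key observation is that in $B_{12,38}$ the interior edge $2\,3$ is a trefoil-knotted spanning edge with both endpoints on $\partial B_{12,38}$. Under barycentric subdivision this single edge becomes a two-edge arc $2 - m - 3$, where $m$ is the barycenter of $2\,3$. Since $2\,3$ was an interior edge of $B_{12,38}$, its barycenter $m$ is an interior vertex of $\sd B_{12,38}$, while $2$ and $3$ remain on the boundary. Thus $2 - m - 3$ is a spanning arc of $\sd B_{12,38}$ consisting of exactly two interior edges. Because barycentric subdivision does not alter the underlying polyhedron, any boundary path closing this arc yields a curve PL-isotopic to the original trefoil in $B_{12,38}$ (e.g., to the pentagonal trefoil through $2\,6\,7\,8\,3$ described in the proof that $B_{12,38}$ is not rectilinearly embeddable); in particular, the arc is $\mathfrak{L}_1$-knotted.

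Now I invoke Theorem~\ref{thm:A}: a $3$-ball containing an $\mathfrak{L}_1$-knotted spanning arc of fewer than three edges cannot be constructible. Applied with two edges, this forces $\sd B_{12,38}$ to be non-constructible. Combined with Proposition~\ref{prop:subdivisions}(iii), we conclude that $\sd B_{12,38}$ is non-evasive, LC, and not constructible, proving the corollary. The only mildly subtle point is checking that $m$ is interior, but this is immediate from the definition of the barycentric subdivision, since a barycenter lies above an interior face precisely when the corresponding face of the original complex is interior.
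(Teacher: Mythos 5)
Your proof is correct and follows essentially the same route as the paper: the paper also takes $\sd B_{12,38}$ as the witness, cites Proposition~\ref{prop:subdivisions}(iii) for non-evasiveness and the LC property, and rules out constructibility via Theorem~\ref{thm:A} because the knotted spanning edge $2\,3$ becomes a knotted spanning arc of two edges after subdivision. Your write-up merely spells out the details (interiority of the barycenter, preservation of the knot type) that the paper leaves implicit.
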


\begin{proof}
The barycentric subdivision of $B_{12,38}$ cannot be constructible by Theorem \ref{thm:A}, because it contains a knotted spanning arc of two edges. 
\end{proof}

\section{Open problems}

The following questions remain open:

\begin{compactitem}
\item Are there constructible $d$-spheres that are not shellable? The problem is open already for $d=3$.
\item Are there non-evasive balls with a knotted spanning edge? 
\item Are there examples of non-shellable spheres that become vertex-decomposable after stacking all facets?
         (This would imply that a non-simplicial $4$-ball can be vertex-decomposable but not shellable.)
\item Are there evasive collapsible $4$-balls?
\item Are there non-evasive balls that are not LC? Are there LC ($3$-)balls that are evasive?
\item Are the $3$-spheres $S_{16,92}$ and $S_{18,125}$  vertex-minimal with the property 
          of having the double trefoil and the triple trefoil knot on three edges in their $1$-skeleton, respectively?
          What happens if we replace the square knot by the granny knot?
\end{compactitem}

\subsection*{Acknowledgements.} Thanks to Jonathan Barmak and Alex Engstr\"{o}m  for helpful discussions.

\begin{small}

\end{small}

\bigskip
\bigskip
\medskip

\small

\noindent
Bruno Benedetti \\
Institut f\"ur Informatik \\
Freie Universit\"at Berlin\\
Takustr.\ 9  \\
14195 Berlin \\
{\tt bruno@zedat.fu-berlin.de}

\vspace{8mm}

\noindent
Frank H. Lutz\\
Institut f\"ur Mathematik\\
Technische Universit\"at Berlin\\
Stra\ss e des 17.\ Juni 136\\
10623 Berlin, Germany\\
{\tt lutz@math.tu-berlin.de}


\begin{thebibliography}{10}
\itemsep=-1.4mm

\bibitem{AdamsEtAl}
{\sc C.~C.~Adams, B.~M.~Brennan, D.~L.~Greilsheimer, and A.~K.~Woo},
Stick numbers and composition of knots and links. J. Knot Theory Ramifications 6
(1997), 149--161.

\bibitem{ADJ}
{\sc J.~Ambj{\o}rn, B.~Durhuus, and T.~Jonsson}, Quantum Geometry. Cambridge University Press, Cambridge, 1997.

\bibitem{AmbVar}
{\sc J.~Ambj{\o}rn and S.~Varsted}, Three dimensional simplicial quantum
  gravity. Nucl. Phys. B, 373 (1992), 557--577.

\bibitem{BagchiDatta11}
{\sc B.~Bagchi and B.~Datta}, On stellated spheres, shellable balls, lower bounds and a combinatorial criterion for tightness.
Preprint (2012, 46 pages) available at \url{arxiv:1102.0856v2}.

\bibitem{BarmakMinian}
{\sc J.~A.~Barmak and G.~E.~Minian}, Strong homotopy types, nerves and collapses. 
Discr. Comput. Geometry 47 (2012), 301-328.

\bibitem{Benedetti-diss}
{\sc B.~Benedetti}, On Locally Constructible Manifolds. PhD thesis, TU
  Berlin (2010).
\newblock Available online at
  \url{http://opus.kobv.de/tuberlin/volltexte/2010/2519/}.

\bibitem{Benedetti-DMT4MWB} 
{\sc B.~Benedetti}, Discrete Morse theory for manifolds with boundary. 
Trans. AMS 364 (2012), 6631--6670.

\bibitem{BenedettiLutz}
{\sc B.~Benedetti and F.~H. Lutz}, The dunce hat and a minimal
  non-extendably collapsible 3-ball. To appear in Electronic Geometry Models. Preprint (2013, 6 pages) at \url{arxiv:0912.3723v2}.

\bibitem{BenedettiLutz2012bpre}
{\sc B.~Benedetti and F.~H.~Lutz},
Random discrete Morse theory and a new library of triangulations.
Preprint (2013, 33 pages) available at \url{arxiv:1303.6422}.

\bibitem{BZ}
{\sc B.~Benedetti and G.~M. Ziegler}, On locally constructible spheres and balls. 
Acta Math. 206 (2011), 205-243.

\bibitem{BING}
{\sc R.~H. Bing}, Some aspects of the topology of 3-manifolds related to
  the {P}oincar\'e conjecture. In Lectures on Modern Mathematics, T.~Saaty,
  ed., vol.~II, Wiley (1964), 93--128.

\bibitem{BjoernerLutz2000}
{\sc A.~Bj\"orner and F.~H. Lutz},
Simplicial manifolds, bistellar flips and a $16$-ver\-tex
  triangulation of the {Poincar\'{e}} homology $3$-sphere.
  Experiment.\ Math. 9 (2000), 275--289.

\bibitem{DoughertyFaberMurphy}
{\sc R.~Dougherty, V.~Faber, and M.~Murphy}, Unflippable tetrahedral complexes. Discrete Comput. Geom. 32 (2004), 309--315.

\bibitem{DJ}
{\sc B.~Durhuus and T.~Jonsson}, Remarks on the entropy of 3-manifolds.
  Nucl. Phys. B 445 (1995), 182--192.

\bibitem{EH}
{\sc R.~Ehrenborg and M.~Hachimori}, Non-constructible complexes and the  bridge index. Europ. J. Combin. 22 (2001), 475--491.

\bibitem{Engstroem}
{\sc A.~Engstr\"{o}m}, Discrete {M}orse functions from {F}ourier
  transforms. Experimental Mathematics 18 (2009), 45--53.

\bibitem{FormanADV}
{\sc R.~Forman}, {M}orse theory for cell complexes. Adv. in Math. 134
  (1998), 90--145.

\bibitem{GOO}
{\sc R.~Goodrick}, Non-simplicially collapsible triangulations of
  {$I^n$}. Proc. Camb. Phil. Soc. 64 (1968), 31--36.

\bibitem{HS}
{\sc M.~Hachimori and K.~Shimokawa}, Tangle sum and constructible
  spheres. Journal of Knot Theory and Its Ramifications 13 (2004),
  373--383.

\bibitem{HZ}
{\sc M.~Hachimori and G.~M. Ziegler}, Decompositions of simplicial balls
  and spheres with knots consisting of few edges. Math. Z. 235 (2000),
  159--171.

\bibitem{HAM}
{\sc M.~E. Hamstrom and R.~P. Jerrard}, Collapsing a triangulation of a
  ``knotted'' cell. Proc. Amer. Math. Soc. 21 (1969), 327--331.

\bibitem{KahnSaksSturtevant}
{\sc J.~Kahn, M.~Saks, and D.~Sturtevant}, A topological approach to evasiveness. Combinatorica 4 (1984), 297--306.

\bibitem{LICK}
{\sc W.~B.~R. Lickorish}, Unshellable
  triangulations of spheres, Europ. J. Combin., 12 (1991), 527--530.

\bibitem{LICKMAR}
{\sc W.~B.~R. Lickorish and J.~M. Martin}, Triangulations of the 3-ball
  with knotted spanning $1$-simplexes and collapsible $r$-th derived
  subdivisions. Trans. Amer. Math. Soc. 170 (1972), 451--458.

\bibitem{LutzEGnonSH}
{\sc F.~H. Lutz}, A vertex-minimal non-shellable simplicial 3-ball with 9
  vertices and 18 facets.
\newblock In \url{www.eg-models.de}, Electronic Geometry Model No.
  2003.05.004.

\bibitem{LutzEGnonVD}
{\sc F.~H. Lutz}, Vertex-minimal not vertex-decomposable balls.
\newblock In \url{www.eg-models.de}, Electronic Geometry Model No.
  2003.06.001.

\bibitem{Lutz_BISTELLAR}
{\sc F.~H. Lutz},
\texttt{BISTELLAR}, {V}ersion 11/03.
\url{http://page.math.tu-berlin.de/~lutz/stellar/BISTELLAR}, 2003.

\bibitem{LUTZ1}
{\sc F.~H. Lutz}, Small examples of
  nonconstructible simplicial balls and spheres. {SIAM} J. Discrete Math. 18 (2004), 103--109.

\bibitem{Pachner}
{\sc U.~Pachner}, P.~L.~homeomorphic manifolds are equivalent by elementary shellings. Europ. J. Combin. 12 (1991), 
129--145.

\bibitem{PB}
{\sc J.~S. Provan and L.~J. Billera}, Decompositions of simplicial
  complexes related to diameters of convex polyhedra. Math.~Operations
  Research 5 (1980), 576--594.

\bibitem{Rudin}
{\sc M.~E. Rudin}, An unshellable triangulation of a tetrahedron. 
Bull.~Amer.~Math.~Soc. 64 (1958), 90--91.

\bibitem{Welker}
{\sc V.~Welker}, Constructions preserving evasiveness and collapsibility. 
  Discr. Math. 207 (1999), 243--255.

\end{thebibliography}
\end{document}